\DeclareMathOperator{\I}{\mathbbm{1}}%
\DeclareMathOperator{\Law}{Law}%
\def\E{\hskip.15ex\mathsf{E}\hskip.10ex}
\def\P{\mathsf{P}}
\def\V{\mathcal{V}}
\def\Var{\mathop{\mbox{\rm Var}}}
\def\cov{\mathop{\mbox{\rm cov}}}
\def\eps{\varepsilon}
\let\temp\phi
\let\phi\varphi
\let\varphi\temp
\newtheorem{Theorem} {Theorem}[section]
\newtheorem{Lemma}[Theorem]{Lemma}
\newtheorem{Proposition}[Theorem]{Proposition}
\theoremstyle{definition}
\theoremstyle{definition}\newtheorem{Remark}[Theorem]{Remark}
\theoremstyle{definition}\newtheorem{Definition}{Definition}[section]
\renewcommand{\theequation}{\thesection.\arabic{equation}}
\numberwithin{equation}{section}
\renewcommand{\ge}{\geqslant}
\renewcommand{\le}{\leqslant}
\newcommand{\nn}{\nonumber\\}
\newcommand{\nnn}{\nonumber}
\newcommand{\wt}{\widetilde}
\renewcommand{\d}{\partial}
\newcommand{\B}{\mathbf{B}}
\newcommand{\C}{\mathbf{C}}
\newcommand{\F}{\mathcal{F}}
\newcommand{\G}{\mathcal{G}}
\newcommand{\N}{\mathbb{N}}
\newcommand{\R}{\mathbb{R}}
\newcommand{\Z}{\mathbb{Z}}
\title{\textbf{Regularization by noise and flows of solutions for a stochastic heat equation}}
\author[*]{Oleg Butkovsky}
\author[$\ddagger$]{Leonid Mytnik}
\affil[{ }]{\small{Technion --- Israel Institute of Technology

Faculty of Industrial Engineering and Management

Haifa, 3200003, Israel}.}
\begin{document}

\maketitle
\renewcommand{\thefootnote}{*}
\footnotetext{ Email: \texttt{oleg.butkovskiy@gmail.com}. Supported in part by Israel Science Foundation Grant 1325/14, Russian Foundation for Basic Research Grant 13-01-00612-a, National Science Foundation Grant DMS--1440140, and a Technion fellowship.}
\renewcommand{\thefootnote}{$\ddagger$}
\footnotetext{Email: \texttt{leonid@technion.ac.il}.}

\begin{abstract}
Motivated by the regularization by noise phenomenon for SDEs  we prove
existence and uniqueness of the flow of solutions
for the non-Lipschitz stochastic heat  equation
\[\frac{\partial u}{\partial t}=\frac12\frac{\d^2 u}{\d z^2} +  b(u(t,z)) + \dot{W}(t,z),
\]
where $\dot W$ is a space-time white noise on $\R_+\times\R$  and $b$ is a bounded measurable function on $\R$.
As a
byproduct of our proof we also establish the so-called path--by--path uniqueness
for any  initial condition in a certain class  on the same set of probability one.
This extends recent results of Davie (2007) to the context of stochastic partial differential equations.
\end{abstract}

\section{Introduction}
This work deals with the  uniqueness theory for stochastic heat equations of the following form
\begin{align}\label{SPDE}
\frac{\d u}{\d t}&=\frac12\frac{\d^2 u}{\d z^2}+b(u(t,z))+\dot{W}(t,z),\quad t\ge0,\, z\in\mathbb{R},\\
u(0,z)&=q(z),\nonumber
\end{align}
where $\dot W$ is a Gaussian space-time white noise on $\R_+\times\R$, $b$ is a bounded Borel measurable function on $\R$, and $q$ is a Borel measurable function on
$\R$ satisfying certain growth conditions.  To be more precise we are  going to construct  the flow of
solutions to \eqref{SPDE} which is indexed by initial conditions $q$; we will  establish  uniqueness of the flow and  show that in fact the flow can be constructed in a  PDE sense on the set of full probability measure.

The equation~\eqref{SPDE} has been extensively studied in the SPDE literature. The strong existence and
uniqueness (in a probability sense) to that equation has been shown by Gy{\"o}ngy and Pardoux in
\cite{GyongyPardoux1993} for bounded $b$  and in \cite{GyongyPardoux1993b} for some locally unbounded
$b$.
Later, in~\cite{BGP94},  the results
were extended to the equations with the multiplicative noise.
Note that in the above references the equations are defined for the spatial variable $z\in [0,1]$, but the results could be easily extended to our setting of $z\in\R$.

The strong uniqueness
for~\eqref{SPDE} represents a phenomenon that is called ``regularization by noise''. This is the property that roughly speaking can be formulated as follows: deterministic equation without noise might not have uniqueness or existence property; however whenever the equation is perturbed by noise it has a unique solution, see  the related discussion in a recent book of Flandoli \cite{Fl11}.
This is the situation with \eqref{SPDE}: clearly one cannot make a general claim that equation \eqref{SPDE} {\it without noise} at the right hand side has a unique solution whenever $b$ is not Lipschitz, whereas, as we mentioned above, with the noise,
uniqueness holds for a large class of drifts $b$.
Note that whenever we say that there exists a unique {\it strong} solution to \eqref{SPDE} we mean by this
that on some filtered probability space  $(\Omega,\mathcal{F},(\mathcal{F}_t)_{t\ge0},\P)$ there exists a unique {\it adapted} strong solution to that equation. That, in fact, implies that regularization by noise phenomenon happens  in probability sense, as a regularization for It\^o-Walsh stochastic equation.

On the other hand, one can ask the question whether the regularization effect takes place in a purely PDE setting.
That is, one is interested whether it is possible to  find a set $\Omega'\subset \Omega$ of full probability such that for almost every $\omega\in \Omega'$, given the path
$$(t,z)\mapsto V(t,z,\omega)$$
equation~\eqref{SPDE}   in the integral, or so-called,  mild form (see equation~\eqref{SPDEmild} below),  has a unique solution.
Due to Flandoli's definition we will call the uniqueness of such kind the \emph{path-by-path uniqueness}, see \cite[Definition 1.5]{Fl11} and the discussion at \cite[Section~1.3.3]{Fl11}.

The problem of path-by-path uniqueness is interesting in itself. However it is closely related to another interesting question: existence and uniqueness of the flow of solutions indexed by initial conditions $q$ of the equation. To the best of our knowledge, not much is known about existence and uniqueness of  flows for SPDEs. Even if the drift and diffusion are very smooth functions, only the local flow property was established in \cite[Corollary 1.10]{HP15}. If the drift is Lipschitz and the diffusion coefficient is linear, the flow property was proved in~\cite{GZ11}; see also~\cite{cer03} for related results. Linear systems were considered earlier by Flandoli~\cite{Fl93}. We are not aware of any results in the literature concerning the case of non-Lipschitz coefficients; in the current paper we study an SPDE with a non--Lipschitz drift and an additive noise.

The question of regularization by noise for SDEs has been studied much more extensively. In particular, the following SDE has been thoroughly investigated:
\begin{eqnarray}
\label{sde1}
dX_t= b(X_t)+dB_t,
\end{eqnarray}
where $b$ is a measurable function and $B$ is a $d$-dimensional Brownian motion defined on a filtered probability space
${(\Omega,\mathcal{F},(\mathcal{F}_t)_{t\ge0}, \P)}$. First, it was derived by Zvonkin in~\cite{zvonkin74} for $d=1$,
that the above equation has a unique strong solution for a bounded measurable $b$. Then this result was generalized by Veretennikov in~\cite{ver80} for the multidimensional case, and later it was extended by Krylov and R\"ockner in~\cite{kr_rock05} for the case of
locally unbounded $b$ under some integrability condition. The flow property of solutions to~\eqref{sde1} was also established under essentially the same integrability condition, see \cite{FlFe13}, \cite{fgp10} and \cite{zhang_x11} for the case of non-constant diffusion
coefficients. Note that the definition of stochastic flow in the above references requires that  the solution  $\{X_t, t\ge 0\}$  is adapted  with respect to the filtration $\mathcal{F}_t$.
In particular, the strong uniqueness, is by definition, the  uniqueness  among the adapted solutions.
All the proofs use a Zvonkin-type transformation \cite{zvonkin74} that allows either to eliminate the ``non-regular'' drift or to make it more regular.  For the related recent interesting works on flows of SDEs see also~\cite{mnp15}, \cite{rez14}.

If one asks the path-by-path uniqueness for \eqref{sde1} then the first result in this direction has been
achieved by Davie in~\cite{Dav}, who showed it for a fixed initial condition $x$. Later the result has been generalized by Shaposhnikov in~\cite{Shap}, who established  path-by-path uniqueness of solutions simultaneously for
all initial conditions. Shaposhnikov also developed a new method that is based on the flow construction of Fedrizzi and Flandoli \cite{FlFe13}.
Recently the regularization by noise has been constructed also for equations driven by other types
of noises, e.g. L\'{e}vy noises: see Priola~\cite{Priola15}, where Shaposhnikov's method is used. We would also like to mention a paper by Catellier and Gubinelli \cite{CG} where a number of very interesting results
concerning regularization by noise and path--by--path uniqueness for ODEs were achieved.

Now if we get back to our SPDE setting, we can say outright that we do not have a luxury of having a  convenient Zvonkin-type transformation. That is why, we in a sense use the reverse argument: we first show path-by-path uniqueness together with some continuity with respect to initial conditions and based on this we show existence and uniqueness of the flow. To push the argument through we develop a new method that extends Davie's approach to the infinite-dimensional case. We believe that our method of proving existence of the flow is of independent interest.

In the next section we will present the main results of the paper.

\bigskip

\noindent \textbf{Acknowledgements}. The authors are grateful to Siva Athreya, Haya Kaspi, Andrew Lawrie, Eyal Neuman and Alexander Shaposhnikov for useful discussions. This material is partially based upon work supported by the National Science Foundation
under Grant No. DMS--1440140 while the first author was in residence at the Mathematical Sciences Research Institute (MSRI) in Berkeley, California, during the Fall 2015 semester. The first author is very grateful to MSRI for their
support and hospitality.

\section{Main results}\label{S:MR}

We study a one-dimensional stochastic heat equation on $\R$ with a drift \eqref{SPDE}. Let $(\Omega,\F,(\F_t)_{t\ge0},\P)$ be a probability space. Let $\dot W$ be a space-time white noise on this space adapted to the filtration. Let $p$ be a standard heat kernel
\begin{equation*}
p_t(z)=\frac{1}{\sqrt{2\pi t}}\exp(-z^2/2t),\quad t>0,\,z\in\R,
\end{equation*}
and $V$ be a convolution of the heat kernel $p$ with the white noise $\dot{W}(\cdot,\cdot,\omega)$,  that is
\begin{equation*}
V(s,t,z,\omega):=\int_s^{t}\int_{\R} p_{t-t'}(z-z') \,W(dt',dz'),\quad t\ge0,\,s\in[0,t],\,z\in\R.
\end{equation*}
In case $s=0$ for brevity we drop the first index and write $V(t,z,\omega):=V(0,t,z,\omega)$. Further we will frequently omit $\omega$ from the notation. Later on, in Lemma~\ref{L:GE:leon} we will show existence of  a modification of $V$ that is almost surely jointly continuous in $(s,t,z)$; with some abuse of notation this modification will be denoted by the same symbol $V$. As usual, here and in the sequel we use the convention that  $\int p_0 (x-y) f(y) dy:=f(x)$ for any measurable function $f$.

We say that a random function $u$ solves \eqref{SPDE} in the path--by--path sense, if $u(0,z)=q(z)$ and for $\P$-almost surely $\omega$ the following holds for any $t>0$, $z\in\R$
\begin{align}\label{SPDEmild}
u(t,z,\omega)=\int_{\R}p_t(z-z')q(z')\,dz'+\int_0^t\int_{\R} p_{t-t'}(z-z')b(u(t',z',\omega))\,dz'\,dt'+V(0,t,z,\omega).
\end{align}
We will also consider a stochastic heat equation that starts with the initial condition $q$ at time $s\ge0$.
\begin{align}\label{SPDEmildflow}
u(t,z,\omega)&=\int_{\R}p_{t-s}(z-z')q(z',\omega)\,dz'+\int_s^t\int_{\R} p_{t-t'}(z-z')b(u(t',z',\omega))\,dz'\,dt'\nonumber\\
&+V(s,t,z,\omega),\quad t> s,\,z\in\R,\\
u(s,z,\omega)&=q(z,\omega),\quad z\in\R\nnn.
\end{align}
Sometimes, when there is an ambiguity, we denote a solution to \eqref{SPDEmildflow} by $u_{s,q}(t,z,\omega)$, thus emphasizing the initial conditions. We see that for $s=0$ \eqref{SPDEmildflow} is just \eqref{SPDEmild}. We have to analyze $u_{s,q}$ for $s\ge0$ (rather than just at $s=0$) in order to prove the existence of the flow; see the proof of Theorem~\ref{T:stochasticflow}(a) below.

Note that the difference between the definition given above and the standard one (see, e.g., \cite[Definition~6.3]{Khosh}) is  that we do not
require  adaptiveness of the solution $u$ to the filtration generated by $\dot W$. Instead of it, for each fixed $\omega\in\Omega$ we treat equation \eqref{SPDEmildflow} separately as a deterministic PDE with a forcing term $V(s,\cdot,\cdot,\omega)$.

Let us now present the main results of the paper. First we define a class of functions that we
take as initial conditions to \eqref{SPDE}.

\begin{Definition}
\begin{enumerate}[label={\arabic*$)$}]
\item Let $\mu\ge0$. We say that a  measurable function $f\colon\R\to\R$ belongs to the class $\mathbf B(\mu)$, if
there exists a constant $C>0$ such that $|f(z)|\le C (|z|^\mu\vee1)$ for  $z\in\R$.
\item We say that a function $f\colon\R\to\R$ belongs to the class $\mathbf B(0+)$, if $f$ belongs
to the class $\mathbf B(\eps)$ for all $\eps>0$.
\end{enumerate}
\end{Definition}
For brevity, the class $\mathbf B(0)$ of measurable bounded functions on $\R$ will be denoted just by $\mathbf B$. If $f\in\B$ we define
$\|f\|_\infty:=\sup_{z\in\R} |f(z)|$.

Our first result proves existence and path--by--path uniqueness (see brief discussion on this concept in the introduction) of solutions to \eqref{SPDE} on some ``good'' set of full
probability measure simultaneously for \textbf{all} initial conditions in $\mathbf B(0+)$. Note that the class $\mathbf B(0+)$ (rather than, for example, $\mathbf B$) of initial conditions is chosen, since $u_{s,q}(t,\cdot)\in\mathbf B(0+)$ as shown below. Thus, if one starts 
equation \eqref{SPDEmildflow} from initial condition in $\mathbf B(0+)$, then at any $t\ge0$ the solution to this equation remains in the same class.

\begin{Theorem}\label{T:exandpbp}
Let $b\in\B$. There exists a set $\Omega'=\Omega'(b)\subset\Omega$ with the following properties:
\begin{enumerate}[label={\arabic*$)$}]
\item
$\P(\Omega')=1$.
\item Let $\omega\in\Omega'$. Then for any initial condition $q\in\mathbf B(0+)$ and any $s\ge0$ equation \eqref{SPDEmildflow} has a unique solution. This solution $u_{s,q}(t,\cdot)\in \mathbf B(0+)$ for any $t\ge0$.
\item Let $\omega\in\Omega'$ and $q_1,q_2\in\mathbf B(0+)$ be two initial conditions. If $q_1(z)=q_2(z)$ Lebesgue--almost everywhere in $z$, then $u_{s,q_1}(t,z,\omega)=u_{s,q_2}(t,z,\omega)$ for any $s\ge0$, $t>0$, $z\in\R$.
\end{enumerate}
\end{Theorem}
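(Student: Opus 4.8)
The plan is to prove the three assertions in the natural order, with the core technical work living in part 2 (existence and path-by-path uniqueness on a common good set). The overall strategy adapts Davie's method to the mild SPDE: instead of a Zvonkin transformation we exploit smoothing of the heat semigroup together with quantitative bounds on the stochastic convolution $V$. First I would fix, once and for all, a single event $\Omega'$ on which $V$ enjoys the regularity and Gaussian-type tail estimates we need; these come from Lemma~\ref{L:GE:leon} and standard Kolmogorov-type continuity arguments, and crucially $\Omega'$ does not depend on the initial condition $q$ or on the starting time $s$. One must check that the countably many required estimates (H\"older bounds for $V(s,t,z)$ over rational $s$, growth control as $|z|\to\infty$, and whatever oscillation bounds the Davie-type argument needs) all hold simultaneously off a null set.

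For \textbf{existence} (part 2), I would set up a Picard/compactness scheme for the deterministic mild equation \eqref{SPDEmildflow} with the fixed continuous forcing $V(s,\cdot,\cdot,\omega)$. Writing $u = P_{t-s}q + \Phi(u) + V$, where $\Phi(u)(t,z) = \int_s^t\int_\R p_{t-t'}(z-z')b(u(t',z'))\,dz'\,dt'$, boundedness of $b$ gives that $\Phi$ maps any function into a space of bounded, H\"older-continuous functions with modulus independent of $u$; the term $P_{t-s}q$ lies in $\mathbf B(0+)$ for $q\in\mathbf B(0+)$ (a direct heat-kernel estimate, using $\mu$ arbitrarily small), and $V$ is continuous with at-most-polynomial growth. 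So Picard iterates live in a fixed set on which Arzel\`a--Ascoli (locally in $z$) plus a diagonal argument extract a convergent subsequence; passing to the limit using continuity of the kernel and dominated convergence (note $b$ need not be continuous, so one argues weak-* convergence of $b(u_n)$, or uses that the convolution with $p$ is a smoothing that is stable under a.e.-convergent bounded sequences) yields a solution. The membership $u_{s,q}(t,\cdot)\in\mathbf B(0+)$ is then read off from the three summands.

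The \textbf{hard part is uniqueness} — showing that for every $\omega\in\Omega'$ the equation has only one solution, simultaneously for all $q$ and $s$. Here is where Davie's idea enters: given two solutions $u^1,u^2$ with the same $(s,q)$, set $w = u^1 - u^2$; then $w(t,z) = \int_s^t\int_\R p_{t-t'}(z-z')\big(b(u^1(t',z')) - b(u^2(t',z'))\big)\,dz'\,dt'$. Since $b$ is merely bounded measurable one cannot use a Lipschitz/Gronwall bound on the difference $b(u^1)-b(u^2)$ pointwise. Instead one estimates the drift increment by comparing it to its average against the Gaussian law of the stochastic convolution: the quantity $\int_s^{t}\!\int_\R p_{t-t'}(z-z')\,b(V(s,t',z')+h)\,dz'\,dt'$, as a function of the shift $h$, is far more regular than $b$ itself — it gains H\"older (indeed Lipschitz-like) regularity in $h$ because the heat kernel averages $b$ over a nondegenerate Gaussian field. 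This ``averaging gains regularity'' estimate, proved on $\Omega'$ via Gaussian computations (Cameron--Martin / Girsanov-type manipulations for the white noise, or direct heat-kernel convolution bounds), is the key lemma; feeding it into a Gronwall argument for $\sup_{z}|w(t,z)|$ (with care about the polynomial growth, handled by working on expanding spatial windows and tracking constants) forces $w\equiv0$. I expect the main obstacles to be: (i) obtaining the averaging estimate with constants that are uniform in $s$ and have controlled dependence on the spatial variable so the Gronwall closes globally in $z\in\R$; and (ii) arranging the quantifiers so that a single $\Omega'$ works for all $(s,q)$ — this needs the bounds to be stated for rational $s$ and then transferred by continuity, and needs the $q$-dependence to enter only through the continuous, $q$-independent structure of the fixed-point map.

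Finally, \textbf{part 3} is a corollary of part 2's uniqueness together with a stability observation: if $q_1=q_2$ Lebesgue-a.e., then $P_{t-s}q_1 = P_{t-s}q_2$ for every $t>s$ since the heat kernel convolution only sees $q_i$ up to a.e.-equivalence; hence $u_{s,q_1}$ and $u_{s,q_2}$ solve the \emph{same} mild equation \eqref{SPDEmildflow} for $t>s$, and by the uniqueness established in part 2 they coincide for all $t>s$, $z\in\R$.
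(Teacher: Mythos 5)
Your overall shape (a single good set from regularity of $V$, a Davie-type smoothing-of-averages lemma as the engine of uniqueness, and part 3 as a trivial consequence of the heat kernel ignoring null sets) matches the paper, and your part 3 is exactly the paper's argument. However, there are two genuine gaps at the places where the real work happens.

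First, existence. Your Picard/compactness scheme stalls exactly at the limit passage through the discontinuous $b$: weak-$*$ convergence of $b(u_n)$ does not identify the limit as $b(u)$, and ``stability of the heat convolution under a.e.-convergent bounded sequences'' is not a theorem you can invoke, since $u_n\to u$ pointwise gives no information on $b(u_n)$ when $b$ is merely measurable. The paper does not build solutions from scratch $\omega$-wise; it takes the Gy\"ongy--Pardoux strong solutions for a \emph{countable dense} family of initial conditions in $\C_0(\R)$ and starting times, obtaining one null set, and then transfers to arbitrary $(s,q)$ with $q\in\mathbf B(0+)$ via a compactness argument (Lemma~\ref{L:contincond}) in which the limit passage through $b$ is justified by a dedicated continuity lemma (Lemma~\ref{L:cont_gen}), whose proof rests on Lusin's theorem plus a quantitative occupation-time bound (Lemma~\ref{L:cont-prev}) showing the relevant random curves spend little time in small sets. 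Some substitute for that occupation-time/Lusin mechanism is indispensable and is absent from your sketch.

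Second, uniqueness. Your plan to feed the averaging estimate into a Gronwall bound for $\sup_z|w(t,z)|$ does not close, for two reasons. The smoothing estimate (Theorem~\ref{T:gladkost}, Lemma~\ref{C:31}) controls $\int b(V+f+x)-b(V+f+y)$ only for shifts $x,y$ that are \emph{constant in time}, whereas the difference $\psi=u^1-u^2$ is time-dependent; the paper has to approximate $\psi$ by piecewise-constant-in-time functions on dyadic grids, control the resulting telescoping error through the quantity $\alpha$ measuring dyadic time-oscillation, and close a two-scale induction on a weighted norm that also carries a spatial Lipschitz seminorm (Lemma~\ref{L:212}); the final bound is of the form $\|\psi((r+1)2^{-m},\cdot)\|_{1,\delta}\le C\|\psi(r2^{-m},\cdot)\|_{1,\delta}+Ce^{-2^{m/(2\delta)}}$ with $C$ of order one, and uniqueness follows by iterating $2^m$ times and letting $m\to\infty$ --- not by a Gronwall inequality, which would require a smallness or Lipschitz structure that the estimate does not provide. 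Moreover, your suggestion that the averaging estimate could come from Girsanov/Cameron--Martin manipulations runs into the quantifier problem you yourself flag: Girsanov removes one fixed drift, so the resulting good set depends on $f$ (hence on the unknown solution and on $q$), which is precisely why the paper proves the smoothing bound uniformly over the whole H\"older class $\mathbf H_T(h,\gamma,\mu,M)$ by a moment bound (Proposition~\ref{P:MB}, via martingale/quadratic-variation arguments) plus a global Kolmogorov theorem, rather than by change of measure.
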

\begin{proof}[Sketch of the proof]
The proof of the theorem consists of three independent parts. First, in Section~\ref{S:prepst} we establish a number of useful regularity properties of $V$ (on a certain ``good'' set) and prove that a certain auxiliary operator is continuous.

Then in Section~\ref{S:exist} we prove  existence of a solution to \eqref{SPDEmildflow}. Let  $\C_0(\R)$ be the space of all continuous functions $\R\to\R$ vanishing at infinity equipped with the standard $\sup$-norm. Recall that it follows from Gy\"{o}ngy and Pardoux \cite{GyongyPardoux1993} that for any $s\ge0$, $q\in  \C_0(\R)$ there exists a set $\Omega_{s,q}$ of probability measure $1$ such that on $\Omega_{s,q}$ equation \eqref{SPDEmildflow} has a solution that starts with the initial condition $q$ at time $s$. More precisely, although in \cite{GyongyPardoux1993} the equation is considered on $(t,z)\in\R_+\times[0,1]$, the methods that are used in that paper work exactly in the same way for $(t,z)\in\R_+\times\R$.
Our goal is to show that this ``good'' set $\Omega_{s,q}$ can be chosen to be the same for all $s\ge0$, $q\in\mathbf{B}(0+)$.

To carry out this plan we fix a countable dense subset $\Xi$ of $\C_0(\R)$ and a countable dense subset $\Theta$ of $\R_+$. Since both $\Xi$ and $\Theta$ are countable, we see that \cite{GyongyPardoux1993} implies that there exists a set $\Omega_E$ of probability measure $1$ such that for any $\omega\in\Omega_E$, $s\in\Theta$, $q\in\Xi$ equation \eqref{SPDEmildflow} has a solution that starts with the initial condition $q$ at time $s$. Using  continuity of a certain integral operator (Lemma~\ref{L:cont_gen}), we will show that there exists a set of full measure $\Omega'\subset\Omega_E$ such that for any $\omega\in\Omega'$, $s\ge0$, $q\in\mathbf{B}(0+)$ equation \eqref{SPDEmildflow} has a solution that starts with the initial condition $q$ at time $s$.

Finally, in Section~\ref{S:uniq} we prove uniqueness of a solution to \eqref{SPDEmildflow}. The proof extensively used smoothing properties of an integral operator involving white noise (Theorem~\ref{T:gladkost} and Lemma~\ref{C:31}). We develop a new approach motivated by the ideas of Davie \cite{Dav}.
\end{proof}

The next theorem shows that there exists a unique flow of solutions to equation \eqref{SPDE} and that this flow is continuous.
We will see that this is a direct corollary of existence and path--by--path uniqueness
of solutions to \eqref{SPDE}.

\begin{Theorem}\label{T:stochasticflow}
Let $b\in\B$. Let $\Omega'\subset\Omega$ be from Theorem~\ref{T:exandpbp}.
\begin{itemize}
\item[a$)$]\textbf{$[$Existence of the flow$]$} There exists a mapping $(s, t, q, \omega)\mapsto \phi(s,t,q,\omega)$ with values in $\mathbf{B}(0+)$
defined for $0\le s\le t$, $q\in\mathbf{B}(0+)$, $\omega\in\Omega'$ such that
\begin{enumerate}[label={\arabic*$.$}]
\item
For any $s\ge0$, $q\in\mathbf{B}(0+)$, $\omega\in\Omega'$ the function $u_{s,q}(t,\cdot):=\phi(s,t,q,\omega)$ is a unique solution to \eqref{SPDEmildflow} that starts from the initial condition $q$ at time $s$;
\item On $\Omega'$ we have for $0\le r<s<t$
\begin{equation*}
\phi(r,t,q,\omega)=\phi(s,t,\phi(r,s,q,\omega),\omega);
\end{equation*}
\end{enumerate}
\item[b$)$]\textbf{$[$Continuity of the flow$]$}
Let $\phi$ be the mapping defined in Part a) of the theorem.
Let $(q_n)_{n\in\Z_+}$ be a sequence of functions from $\mathbf{B}(0+)$, that converges Lebesgue--almost everywhere  to $q\in\mathbf{B}(0+)$. Assume that there exist constants $C>0$, $\mu>0$ such that for any $n\in\Z_+$ one has
    \begin{equation*}
      |q_n(z)|\le C (|z|^\mu\vee1),\quad z\in\R.
    \end{equation*}
    Then on $\Omega'$ we have for $0\le s< t$, $z\in\R$
\begin{equation*}
\lim_{n\to\infty}\phi(s,t,q_n,\omega)(z)=\phi(s,t,q,\omega)(z).
\end{equation*}

\end{itemize}
\end{Theorem}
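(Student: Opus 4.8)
Here the mapping $\phi$ is essentially forced on us: for $\omega\in\Omega'$, $0\le s\le t$ and $q\in\mathbf{B}(0+)$, Theorem~\ref{T:exandpbp} asserts that \eqref{SPDEmildflow} started from $q$ at time $s$ has exactly one solution and that its time-$t$ value lies in $\mathbf{B}(0+)$, so I would simply \emph{define} $\phi(s,t,q,\omega)$ to be that value; item~1 is then a restatement of Theorem~\ref{T:exandpbp}(2). For the cocycle identity, fix $0\le r<s<t$, $q\in\mathbf{B}(0+)$, $\omega\in\Omega'$, set $w(t',\cdot):=\phi(r,t',q,\omega)$ for $t'\ge r$, and note $v:=w(s,\cdot)=\phi(r,s,q,\omega)\in\mathbf{B}(0+)$. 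The plan is to establish the \emph{restriction principle}: the map $t'\mapsto w(t',\cdot)$, $t'\ge s$, solves \eqref{SPDEmildflow} with initial datum $v$ at time $s$. Once this is shown, uniqueness (Theorem~\ref{T:exandpbp}(2) again) forces $w(t',\cdot)=\phi(s,t',v,\omega)$, and evaluation at $t'=t$ gives $\phi(r,t,q,\omega)=\phi(s,t,\phi(r,s,q,\omega),\omega)$. To prove the restriction principle I would split each of the three integrals in \eqref{SPDEmildflow} (written for $w$, initial time $r$) at the time $s$, using the semigroup identity $p_{t-r}=p_{t-s}*p_{s-r}$ of the heat kernel together with its counterpart for $V$,
\[
V(r,t,z)=\int_\R p_{t-s}(z-z')\,V(r,s,z')\,dz'+V(s,t,z),
\]
valid for all $0\le r\le s\le t$, $z\in\R$ on a set of full probability (a routine consequence of the stochastic Fubini theorem and the semigroup property of $p$, of the same nature as the regularity of $V$ proved in Section~\ref{S:prepst}, so I would fold it into the definition of $\Omega'$). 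A short computation then shows that the $[r,s]$-contributions to the three terms reassemble into $\int_\R p_{t-s}(z-z')\,w(s,z')\,dz'=\int_\R p_{t-s}(z-z')\,v(z')\,dz'$, while the $[s,t]$-contributions are precisely the drift and noise terms of \eqref{SPDEmildflow} at initial time $s$.

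\textbf{Part b).} Fix $\omega\in\Omega'$ and $0\le s<t$, write $u^{(n)}(t',z):=\phi(s,t',q_n,\omega)(z)$ and $u(t',z):=\phi(s,t',q,\omega)(z)$ for $t'\ge s$, and decompose $u^{(n)}=h^{(n)}+V+D^{(n)}$, where $h^{(n)}(t',z):=\int_\R p_{t'-s}(z-z')q_n(z')\,dz'$ is the free evolution and $D^{(n)}$ is the drift convolution; likewise $u=h+V+D$. The uniform bound $|q_n(z)|\le C(|z|^\mu\vee1)$ and dominated convergence against the Gaussian kernel give $h^{(n)}\to h$ uniformly on every compact subset of $(s,\infty)\times\R$. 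Since $|b(u^{(n)})|\le\|b\|_\infty$, the family $\{D^{(n)}\}$ is uniformly bounded and, being parabolic convolutions of uniformly bounded functions, equicontinuous (even equi-H\"older) on compacts; by Arzel\`a--Ascoli and a diagonal argument I would extract a subsequence along which $D^{(n_k)}\to D^\ast$ uniformly on compacts of $[s,\infty)\times\R$, so that $u^{(n_k)}\to u^\ast:=h+V+D^\ast$ uniformly on compacts of $(s,\infty)\times\R$. Letting $t'\downarrow s$, and using $V(s,s,\cdot)=0$, $D^\ast(s,\cdot)=0$ and $p_{t'-s}*q\to q$ in $L^1_{\mathrm{loc}}$, one checks that $u^\ast$ has initial datum $q$ at time $s$ and that $u^\ast(t',\cdot)\in\mathbf{B}(0+)$ for $t'>s$.

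It remains to identify $D^\ast$, and this is the one genuinely delicate point: since $b$ is merely bounded and measurable, pointwise convergence $u^{(n_k)}\to u^\ast$ does \emph{not} imply $b(u^{(n_k)})\to b(u^\ast)$. Here I would invoke the smoothing estimates of Section~\ref{S:uniq} (Theorem~\ref{T:gladkost} and Lemma~\ref{C:31}): writing $u^{(n_k)}=V+g^{(n_k)}$ with regular part $g^{(n_k)}=h^{(n_k)}+D^{(n_k)}\to u^\ast-V$ uniformly on compacts, these estimates should yield continuity in $g$ of the operator $g\mapsto\int_s^{t'}\int_\R p_{t'-r}(z-z')\,b\big(V(s,r,z')+g(r,z')\big)\,dz'\,dr$ --- the occupation-time regularity of $V$ mollifying the jumps of $b$ after integration --- whence $D^{(n_k)}\to\int_s^{t'}\int_\R p_{t'-r}(z-z')\,b(u^\ast(r,z'))\,dz'\,dr$, i.e.\ $u^\ast$ solves \eqref{SPDEmildflow} with datum $q$ at time $s$. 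By the uniqueness part of Theorem~\ref{T:exandpbp}, $u^\ast=u=\phi(s,\cdot,q,\omega)$. Since every subsequence of $(u^{(n)})$ has a further subsequence converging (uniformly on compacts of $(s,\infty)\times\R$) to the same limit $u$, the whole sequence converges; in particular $\phi(s,t,q_n,\omega)(z)\to\phi(s,t,q,\omega)(z)$ for every $z\in\R$. I expect this passage to the limit through the discontinuous nonlinearity $b$ to be the main obstacle; the remaining ingredients are standard manipulations of the heat semigroup together with dominated convergence.
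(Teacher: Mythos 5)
Your part~a) is correct and is essentially the paper's argument: define $\phi(s,t,q,\omega):=u_{s,q}(t,\cdot,\omega)$ and deduce the cocycle identity from uniqueness in Theorem~\ref{T:exandpbp}. You in fact supply a detail the paper leaves implicit, namely the restriction principle that $t'\mapsto u_{r,q}(t',\cdot)$, $t'\ge s$, solves \eqref{SPDEmildflow} with initial datum $u_{r,q}(s,\cdot)$ at time $s$; your decomposition $V(r,t,z)=\int_\R p_{t-s}(z-z')V(r,s,z')\,dz'+V(s,t,z)$ is the correct identity (stochastic Fubini plus the semigroup property, upgraded to all arguments simultaneously by the continuity of $V$ and $\V$ from Lemma~\ref{L:GE:leon}), and the reassembly of the $[r,s]$-contributions is exactly right. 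Your part~b) also has the same architecture as the paper's: the paper's proof is a few lines because the whole compactness argument you redo (uniform bound and equicontinuity of the drift convolutions, Arzel\`a--Ascoli, identification of the limit as a solution, then uniqueness plus the subsequence principle) is already packaged as Lemma~\ref{L:contincond}, which is proved in Section~\ref{S:exist} and could simply be cited.

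The one genuine gap is precisely at the step you flag as delicate: identifying $D^\ast$. Theorem~\ref{T:gladkost} and Lemma~\ref{C:31} do not do this job. They control differences $b(V+f+x)-b(V+f+y)$ only for shifts $x,y$ that are constant in time (Lemma~\ref{C:31} allows $z$-dependent but still time-independent bounded shifts), whereas here the perturbation $g^{(n_k)}-g$ depends on $t$. If you try to reduce to that setting by replacing $g^{(n_k)}$ and $g$ by piecewise-constant-in-time approximations and telescoping over dyadic scales (as in the proofs of Theorem~\ref{T:gladkost} and Lemma~\ref{L:212}), you still have to identify the limit of $\int b(V+\lambda_l(g))\,dt$ as $l\to\infty$ with $\int b(V+g)\,dt$; since $b$ is merely measurable, pointwise convergence of the arguments gives nothing, and this identification is exactly what the paper's continuity Lemma~\ref{L:cont_gen} provides (see its use in \eqref{I2split}), via Lusin's theorem and the occupation-time estimate of Lemma~\ref{L:cont-prev} rather than via the smoothing estimates. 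So the mechanism you describe (``occupation-time regularity of $V$ mollifying the jumps of $b$ after integration'') is the right one, but it is not a consequence of Theorem~\ref{T:gladkost}/Lemma~\ref{C:31}; to close the argument you should invoke Lemma~\ref{L:cont_gen} (with $f_n\leftarrow g^{(n_k)}$, $f\leftarrow g$, $\psi\leftarrow 0$, $\theta\leftarrow p_{t'-\cdot}(z-z')$), after checking that the regular parts $g^{(n_k)}$ lie in a fixed class $\mathbf{H}_T(3/4,1,\mu,M)$, which is how the paper proceeds inside Lemma~\ref{L:contincond}.
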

\begin{proof}[Proof of Theorem~\ref{T:stochasticflow}(a)]
By Theorem~\ref{T:exandpbp},
for any $\omega\in\Omega'$, $q\in\mathbf{B}(0+)$, $s\ge0$ equation \eqref{SPDEmildflow} has a unique solution $u_{s,q}$ that starts with
initial condition $q$ at time $s$.

Now for $0\le s\le t$, $q\in\mathbf{B}(0+)$, $\omega\in\Omega'$ define
\begin{equation*}
\phi(s,t,q,\omega):=u_{s,q}(t,\cdot,\omega).
\end{equation*}

Let us check that $\phi$ satisfies all the properties of the flow formulated in Theorem~\ref{T:stochasticflow}(a). The first  property is obvious. To check the second property we fix any $\omega\in\Omega'$, $0\le r<s$, $q\in\mathbf{B}(0+)$. For $t\ge s$ put
$u_1(t,\cdot):=\phi(r,t,q,\omega)$ and $u_2(t,\cdot):=\phi(s,t,\phi(r,s,q,\omega),\omega)$. Note that both
$u_1$ and $u_2$ are solutions to equation \eqref{SPDEmildflow} that starts with
initial condition $\phi(r,s,q,\omega)$ at time $s$. The initial condition $\phi(r,s,q,\omega)=u_{r,q}(s,\cdot,\omega)$ is in
$\mathbf{B}(0+)$ by Theorem~\ref{T:exandpbp}. Therefore, by Theorem~\ref{T:exandpbp} the solutions $u_1$ and $u_2$ coincide. Thus,
$$
\phi(r,t,q,\omega)=\phi(s,t,\phi(r,s,q,\omega),\omega).
$$
and $\phi$ is a flow of solutions to $\eqref{SPDEmildflow}$.
\end{proof}

The proof of  Theorem~\ref{T:stochasticflow}(b) is given in Section~\ref{S:cont}.

The next theorem describes smoothing properties of the noise $V$ that are  crucial for the proof of Theorems~\ref{T:exandpbp} and
\ref{T:stochasticflow}. We are interested in the regularity properties of the mapping
\begin{equation}\label{operupol}
(x,t,z)\mapsto \int_{0}^{t} b(V(r,z)+f(r,z)+x)\,dr,
\end{equation}
where $f$ belongs to a certain class of weighted H\"{o}lder functions with singularities defined below.

\begin{Definition}\label{D:H}
\begin{enumerate}[label={\arabic*$)$}]
\item Let $h,\gamma\in[0,1]$, $T, M,\mu\ge0$. We say that a measurable function $f\colon[0,T]\times\R\to\R$
belongs to the class $\mathbf{H}_T(h,\gamma,\mu,M)$ if
\begin{equation*}
|f(t,z)-f(s,z)|\le M |t-s|^h s^{-\gamma} (|z|^\mu\vee1),\quad 0<s<t\le T,\,\,z\in\R
\end{equation*}
and  $|f(t,z)|\le M (|z|^\mu\vee1)$ for  $z\in\R$, $t\in[0,T]$.
\item We say that a function $f\colon[0,T]\times\R\to\R$ belongs to the class $\mathbf{H}_T(h-,\gamma,0+)$ if
for any $\eps>0$ there exists $M>0$ such that $f\in\mathbf{H}_T(h-\eps,\gamma,\eps,M)$.
\end{enumerate}
\end{Definition}

If there is no ambiguity in time interval, we will frequently drop the subscript $T$ and write $\mathbf H$ instead of
$\mathbf{H}_T$.

\begin{Theorem}\label{T:gladkost}
Let $b\in\B$. There exists  a set $\Omega''=\Omega''(b)\subset\Omega$ with the following properties:
\begin{enumerate}[label={\arabic*$)$}]
\item
$\P(\Omega'')=1$;
\item Let $\omega\in\Omega''$. Then for any $0<\eps<3/4$, $T>0$, $h\in(1/2,1]$, $M>0$
there exists a constant $K_b=K_b(b,\omega,\eps, T,M, h)<\infty$
such that for any $\gamma\in[0,1]$, $\mu>0$, $f\in\mathbf H_T(h,\gamma,\mu,M)$, $x,y,z\in\R$, $0\le t_1\le t_2\le T$, $s\in[0,T]$ we have
\begin{align}\label{predmainest}
\bigl|\int_{t_1}^{t_2} &(b(V(t+s,z)+f(t,z)+x)-b(V(t+s,z)+f(t,z)+y))\,dt\bigr|\nnn\\
&\le K_b(\omega)|x-y|(t_2-t_1)^{1-1/4(\frac{\gamma}{h-1/4}\vee1)-\eps}(|x|\vee |y|\vee 1)^{1+\eps}(|z|^{3\mu+\eps}\vee 1) .
\end{align}
Furthermore, $\E K_b\le \|b\|_{\infty} C (\eps, T,M,h)$ for some function $C$ that does not depend on $b$.
\end{enumerate}
\end{Theorem}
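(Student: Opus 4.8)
The statement is a quantitative regularization-by-noise estimate: the occupation-type integral $t\mapsto\int_0^t b(V(r+s,z)+f(r,z)+x)\,dr$, viewed as a function of the shift $x$, is Lipschitz with a gain of regularity in the time variable, uniformly over all drift perturbations $f$ in a weighted Hölder class. This is the SPDE analogue of Davie's key lemma for SDEs, where the Gaussian behaviour of $V$ plays the role that Brownian local time plays in the classical argument. Let me sketch how I would organize the argument.

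**Step 1: reduce to a stochastic-sewing / Gaussian-increment estimate.** Fix $z$ and $s$, write $X_t:=V(t+s,z)$, and note that for fixed $z$ the process $t\mapsto V(t,z)$ is a centered Gaussian process whose increments satisfy $\E|V(t,z)-V(t',z)|^2\asymp |t-t'|^{1/2}$ (this is the well-known $1/4$-Hölder-in-time regularity of the stochastic heat convolution, and can be extracted from Lemma~\ref{L:GE:leon} together with a direct covariance computation). The core analytic object is
\[
A_{a,b}:=\int_a^b\bigl(b(X_t+f(t,z)+x)-b(X_t+f(t,z)+y)\bigr)\,dt.
\]
I would estimate the moments $\E|A_{a,b}|^{2m}$ (or $\E|A_{a,b}|^2$ and then bootstrap) by expanding the product into a $2m$-fold integral over the simplex $a\le t_1\le\cdots\le t_{2m}\le b$ and conditioning successively: conditionally on $(X_{t_1},\dots,X_{t_{k-1}})$, the increment $X_{t_k}-X_{t_{k-1}}$ has a Gaussian density whose variance is $\gtrsim (t_k-t_{k-1})^{1/2}$, and integrating a bounded function against a Gaussian density with variance $\sigma^2$ against two shifted copies differing by $|x-y|$ produces a factor $\lesssim \|b\|_\infty\,(|x-y|/\sigma)\wedge\|b\|_\infty$. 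This is where the gain comes from: each ``return'' of the Gaussian chain contributes a factor $|x-y|\,(t_k-t_{k-1})^{-1/4}$, and the product over the simplex, after summing the combinatorics, yields $(b-a)^{2m(1-1/4)}|x-y|^{2m}$ up to constants — i.e.\ the exponent $1-1/4$ on the time increment, degraded to $1-\tfrac14(\tfrac{\gamma}{h-1/4}\vee1)$ once the singular weight $s^{-\gamma}$ of $f\in\mathbf H_T(h,\gamma,\mu,M)$ is accounted for near the origin. Then Garsia–Rodemich–Rumsey (or a Kolmogorov-type chaining in the pair $(t_1,t_2)$ and in $x$) upgrades the moment bounds to the pathwise estimate~\eqref{predmainest}, with $\Omega''$ the full-measure set on which the relevant random Hölder norms are finite.

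**Step 2: handle the dependence on $f$, on $x$, and on $z$.** The perturbation $f$ must be dealt with carefully because it is \emph{not} adapted and ranges over an uncountable family; the trick (exactly as in Davie's original argument) is that $f$ enters only through the deterministic shift $f(t,z)$, so at the stage where we integrate against the conditional Gaussian density the shift is a constant and gets absorbed — the bound is uniform in $f$ as long as we control $\|f\|$ through $M$ and the Hölder exponents $(h,\gamma,\mu)$. The passage from fixed $t_1,t_2$ to \emph{all} $t_1\le t_2$ is Kolmogorov in the time pair; the passage from fixed $x,y$ to all $x,y$ is Kolmogorov in $x$ (giving the $(|x|\vee|y|\vee1)^{1+\eps}$ loss, since the Gaussian density controlling large $|x|$ returns deteriorates polynomially — more precisely one exploits that $b(V(t,z)+\cdots+x)$ is nonzero only when $V(t,z)$ is of order $|x|$, an event of small Gaussian probability, and the Gaussian tail absorbs into an $|x|^\eps$ loss after Chebyshev); and the passage to all $z\in\R$ uses that $\sup_{|z|\le R}\sup_t|V(t,z)|$ grows at most polynomially in $R$, which produces the weight $(|z|^{3\mu+\eps}\vee1)$. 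The $L^1$-bound $\E K_b\le\|b\|_\infty C(\eps,T,M,h)$ is read off directly from the moment estimates, since every bound above is linear in $\|b\|_\infty$ (each factor involving $b$ carried a single power of $\|b\|_\infty$, and the GRR/Kolmogorov constants are universal).

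**Main obstacle.** The genuinely hard part is organizing the simplex expansion and the successive Gaussian conditionings so that (i) the combinatorial sum over the $2m$ ordered times closes with the right power of $(b-a)$ and with $m$-independent-enough constants to survive taking $m\to\infty$ in the Kolmogorov step, and (ii) the singularity $s^{-\gamma}$ of $f$ at the origin, which blows up the Hölder-in-time modulus of the shift, is compensated correctly — this is precisely what forces the degraded exponent $1-\tfrac14(\tfrac{\gamma}{h-1/4}\vee 1)$ rather than $1-\tfrac14$, and getting that interpolation exponent exactly right (splitting the time interval at a scale matched to $s$ and the Hölder data) is the delicate bookkeeping. A secondary technical nuisance is that $V$ is not Markov (the heat convolution has long-range time correlations through the kernel $p_{t-t'}$), so the ``conditioning on the past'' step must be justified via Gaussian conditional-density bounds rather than a clean Markov property; one needs the non-degeneracy estimate $\mathrm{Var}(X_{t_k}\mid X_{t_1},\dots,X_{t_{k-1}})\gtrsim(t_k-t_{k-1})^{1/2}$, which should follow from the explicit covariance structure of the stochastic heat convolution but requires a short separate lemma.
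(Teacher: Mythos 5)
Your Step 1 (moment bounds via successive Gaussian conditioning on the increments of $t\mapsto V(t,z)$, whose conditional variance over a step of length $t_k-t_{k-1}$ is of order $(t_k-t_{k-1})^{1/2}$) is in the same spirit as the paper's Proposition~\ref{P:MB}, which is proved by a martingale/quadratic-variation argument with conditional Gaussian density estimates and Burkholder--Davis--Gundy, inspired by \cite{CG}. The genuine gap is in Step 2, in how you handle $f$. You put $f$ inside the moment estimate and argue that, since $f(t,z)$ is a deterministic shift at each conditioning step, ``the bound is uniform in $f$.'' Uniformity of the \emph{moment} bound over $f$ is indeed true, but it only yields, via Kolmogorov/GRR, a pathwise estimate with an exceptional null set depending on the fixed $f$; it does not produce a single set $\Omega''$ of full measure on which \eqref{predmainest} holds simultaneously for the uncountable class $\mathbf H_T(h,\gamma,\mu,M)$, and that simultaneity is the whole point of the theorem (chaining over an infinite-dimensional index set is not available without an entropy/approximation argument that you do not supply). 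The paper resolves this by proving the pathwise estimate only for the finite-dimensional field $(t,z,x)\mapsto\int_0^t b'(V(r,z)+x)\,dr$ (no $f$), on one good set, and then treating an arbitrary $f$ purely deterministically, $\omega$ by $\omega$: $f$ is approximated by dyadic piecewise-constant functions $f_n$ in time, and the integral is telescoped ($I_1+I_2$ in the proof), each term being estimated by the constant-shift bound on the dyadic subintervals and by the $\mathbf H_T(h,\gamma,\mu,M)$-modulus of $f$.

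This structural difference is not cosmetic: the convergence of the telescoping series requires $\delta(h-1/4)>1/4+2\eps$ with $\delta<1$, which is exactly why $h>1/2$ is needed and exactly where the exponent $1-\tfrac14\bigl(\tfrac{\gamma}{h-1/4}\vee1\bigr)-\eps$ comes from. Under your scheme, if the shift $f(t,z)$ were genuinely absorbed in the conditioning, no degradation of the time exponent below $3/4$ would occur and the restriction $h>1/2$ would be invisible --- so your Step 1 (degradation already in the moment bound ``once the singular weight $s^{-\gamma}$ is accounted for'') and Step 2 (absorption of $f$) are in tension, and neither as stated produces the exponent in \eqref{predmainest}. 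A secondary omission: for merely measurable $b$ the paper first proves the result for smooth $b$ (writing $b(\cdot+x)-b(\cdot+y)=\int_x^y b'(\cdot+r)\,dr$) and then passes to the limit using Lusin's theorem together with Lemma~\ref{L:prob0}, which guarantees that the path $t\mapsto V(t+s,z)+f(t,z)$ spends zero time in the exceptional Lebesgue-null set; some substitute for this occupation-of-null-sets step is needed in any approach that wants the pathwise statement for all $f$ at once with only measurable $b$.
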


If the function $b$ were a Lipschitz function,
then the left--hand side of \eqref{predmainest} would be bounded by $|t_1-t_2||x-y|$. In our case, when $b$ is just a bounded function, the left--hand side of \eqref{predmainest} is obviously bounded by $|t_1-t_2|$. Theorem~\ref{T:gladkost} implies that one can trade the regularity in $t$ to gain the regularity in $x$. In particular, we see from the above theorem that $\P$--almost surely the function
\begin{equation*}
x\mapsto\int_{t_1}^{t_2} b(V(t,z)+x)\,dt,\quad x\in\R,
\end{equation*}
is Lipschitz in $x$. Moreover, we have very good local control on coefficients.

\begin{proof}[Sketch of the proof of Theorem~\ref{T:gladkost}]
The proof is based on an application of a suitable version of Kolmogorov continuity theorem to
a corresponding moment bound. This is done in Section~\ref{S:3}. The calculation of the moment bound turned out
to be rather complicated and it involves a number of technical estimates. 
We do it thoroughly in Section~\ref{proofprop} utilizing some ideas from
\cite{CG}.
\end{proof}

\begin{Remark}
We would like to note that whilst the good set $\Omega'$ in Theorems~\ref{T:exandpbp}--\ref{T:gladkost} can be chosen independently
of the initial condition $u_0$, $\Omega'$ might still depend on the drift function $b$.
\end{Remark}

It is interesting to compare smoothing properties of operator \eqref{operupol} to the smoothing properties of a similar operator
with a Brownian motion $B$ in place of $V$, see \cite[Corollary 2.2]{Fl11} and also \cite[Lemmas 3.1 and 3.2]{Dav}. We see that since $V$ in the time variable is less regular than the Brownian motion, Theorem~\ref{T:gladkost} guarantees a better smoothing.


The function $f$ appears in \eqref{operupol} due to the presence of the drift in our main equation \eqref{SPDEmildflow}. Note that in the original Davie's paper \cite{Dav} the smoothing is considered without the drift term (this corresponds to the case $f\equiv0$). That was possible due to the use of the Girsanov transformation for eliminating the drift. In other words, the ``good'' set $\Omega''$ in  \cite{Dav} depends on the drift $f$ and the initial condition. Since we are aimed at establishing the flow property for  \eqref{SPDEmildflow} we have to prove path--by--path uniqueness simultaneously for all initial conditions, see the proof of Theorem~\ref{T:stochasticflow}(a). Thus, we have to prove that smoothing in Theorem~\ref{T:gladkost} occurs simultaneously for all drifts $f$ and this cannot be achieved with Girsanov's transformation.

\medskip
The rest of the paper is devoted to the proofs of the main results and is organized as follows. In Section~\ref{S:3}
we prove Theorem~\ref{T:gladkost}. The proof of Theorem~\ref{T:exandpbp} is rather large and is split into two parts for the convenience
of the reader. Namely, in Section~\ref{S:prepst} we establish a number of auxiliary lemmas and present the main part of the proof in Section~\ref{S:5}. Theorem~\ref{T:stochasticflow} is also proved in Section~\ref{S:5}. Section~\ref{proofKol} contains a proof of a global version of the Kolmogorov continuity theorem that is used in the paper. An important moment bound that is exploited for the proof of Theorem~\ref{T:gladkost} is derived in Section~\ref{proofprop}. A technical lemma that is applied to prove smoothing properties of the noise is
established in Section~\ref{S:PCL}. Finally, a number of technical estimates concerning the Gaussian kernel and related functions are obtained in the Appendix.

\smallskip
\textbf{Convention on constants.} Throughout the paper, $C$ denotes a positive constant whose value may change from line to line. $K$ denotes a random constant whose value might depend on $\omega\in\Omega$.

\section{Proof of Theorem \ref{T:gladkost}}\label{S:3}

We start proving the main results by presenting a proof of Theorem \ref{T:gladkost}.  First we give here a version of the Kolmogorov theorem on a noncompact set (global version) that will be extensively used in this and other proofs in the paper.

Define for $w=(w_1,w_2)\in\R^2$, $a=(a_1,a_2)\in (0,1]^2$ a weighted norm $d_a$
\begin{equation}\label{distdef}
d_a(w):=w_1^{a_1}+w_2^{a_2}.
\end{equation}

\begin{Lemma}[Kolmogorov Continuity Theorem]\label{L:VietR}
Let $X(x,y)$, $x\in\R$, $y\in\R^{2}$, be a continuous random field with values in $\R$.
Assume that there
exist nonnegative constants $a=(a_1,a_2)\in(0,1]^2$, $\alpha$, $\beta_1$, $\beta_2$,  $C$ such that the inequalities
\begin{align}
&\E |X(x_1,y_1)-X(x_1,y_2)-X(x_2,y_1)+X(x_2,y_2)|^{\alpha}\le C  |x_1-x_2|^{\beta_1}d_a(y_1-y_2)^{\beta_2},\nnn\\
&\E |X(x_1,y_1)-X(x_2,y_1)|^{\alpha}\le C  |x_1-x_2|^{\beta_1} \label{secassum}
\end{align}
hold for any $x_1,x_2\in\R$, $y_1,y_2\in\R^{2}$, $|x_1-x_2|\le 1$, $|y_1-y_2|\le 1$.

Then for any $\gamma_1\in(0,(\beta_1-1)/\alpha)$ and $\gamma_2\in(0,(\beta_2-1/a_1-1/a_2)/\alpha)$ there exist a set $\Omega^*\subset\Omega$ with $\P(\Omega^*)=1$ and a random variable $K$  with $\E K(\omega)^{\alpha}\le C_1$ such that for any $\omega\in\Omega'$, $x_1,x_2\in\R$, $y_1,y_2\in\R^2$, $|x_1-x_2|\le 1$, $|y_1-y_2|\le 1$
\begin{equation}\label{KolmogR}
|X(x_1,y_1)-X(x_1,y_2)-X(x_2,y_1)+X(x_2,y_2)\!|\le\! K(\omega) (|x_1|\vee|y_1| \vee 1)^{3/\alpha} |x_1-x_2|^{\gamma_1}d_a(y_1-y_2)^{\gamma_2},
\end{equation}
and
\begin{equation}\label{KolmogR1term}
|X(x_1,y_1)-X(x_2,y_1)|\le K(\omega) (|x_1|\vee|y_1| \vee 1)^{3/\alpha} |x_1-x_2|^{\gamma_1},
\end{equation}
where the constant $C_1>0$ depends on the field $X$ only though $a$, $\alpha$, $\beta_i$, $\gamma_i$, $C$.
\end{Lemma}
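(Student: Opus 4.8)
The plan is to reduce the statement to the classical Kolmogorov--Chentsov continuity theorem applied on a countable family of bounded boxes, and then to assemble the local estimates into the claimed global bound, paying attention to the polynomial weight $(|x_1|\vee|y_1|\vee1)^{3/\alpha}$ that tracks how the constant is allowed to grow at infinity. First I would fix the target exponents $\gamma_1\in(0,(\beta_1-1)/\alpha)$ and $\gamma_2\in(0,(\beta_2-1/a_1-1/a_2)/\alpha)$ and choose auxiliary exponents $\gamma_1'\in(\gamma_1,(\beta_1-1)/\alpha)$, $\gamma_2'\in(\gamma_2,(\beta_2-1/a_1-1/a_2)/\alpha)$ so that there is room to absorb sub-polynomial losses. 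For each integer vector $k=(k_0,k_1,k_2)\in\Z^3$ consider the unit box $Q_k=[k_0,k_0+1]\times[k_1,k_1+1]\times[k_2,k_2+1]\subset\R\times\R^2$. On $Q_k$ the field $X$ restricted to its first argument and to the second argument (with respect to the metric $|x_1-x_2|$ on the first coordinate and $d_a$ on the last two coordinates --- note $d_a$ is a genuine metric since $a_i\le1$) satisfies the two-parameter moment bounds in the hypothesis; hence the standard Kolmogorov--Chentsov argument, in the version adapted to the mixed increment (the ``Garsia--Rodemich--Rumsey'' / chaining argument applied separately in $x$ and in $y$), produces a random variable $K_k$ with $\E K_k^\alpha\le C_1$, where $C_1$ depends only on $a,\alpha,\beta_i,\gamma_i',C$ and not on $k$, such that both \eqref{KolmogR} and \eqref{KolmogR1term} hold on $Q_k$ with $\gamma_i$ replaced by $\gamma_i'$ and with $K_k$ in place of $K(\omega)$, for all pairs of points in $Q_k$ at distance $\le1$.

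Next I would patch the boxes together. For a general pair of points $(x_1,y_1)$, $(x_2,y_2)$ with $|x_1-x_2|\le1$ and $|y_1-y_2|\le1$, the segment joining them meets at most a bounded number (depending only on dimension, here at most $8$) of the boxes $Q_k$, so the mixed increment over the full pair is bounded by a sum of at most $8$ mixed increments over adjacent sub-pairs lying in single boxes; applying the per-box estimate and using that $|x_1-x_2|^{\gamma_1'}\le|x_1-x_2|^{\gamma_1}$ and $d_a(y_1-y_2)^{\gamma_2'}\le d_a(y_1-y_2)^{\gamma_2}$ when these quantities are $\le1$ (which I would arrange, shrinking the unit scale if necessary), I obtain \eqref{KolmogR} with the constant $K(\omega):=C\sup_k \bigl(K_k(\omega)/(1+|k|)^{3/\alpha}\bigr)$, and similarly for \eqref{KolmogR1term}. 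The role of the weight $(|x_1|\vee|y_1|\vee1)^{3/\alpha}$ is precisely to dominate $(1+|k|)^{3/\alpha}$ for any box $Q_k$ containing $(x_1,y_1)$; conversely, dividing by this weight is what makes the supremum over all $k\in\Z^3$ finite almost surely and integrable: by the union bound / Borel--Cantelli,
\[
\E\Bigl[\sup_k \frac{K_k^\alpha}{(1+|k|)^{3}}\Bigr]\le \sum_{k\in\Z^3}\frac{\E K_k^\alpha}{(1+|k|)^{3}}\le C_1\sum_{k\in\Z^3}\frac{1}{(1+|k|)^{3}},
\]
and the exponent $3$ on a $3$-dimensional lattice makes this sum \emph{divergent}, so here one must be slightly more careful: I would in fact take the weight exponent strictly larger in the summation, i.e. bound $K(\omega)^\alpha$ by $\sup_k K_k^\alpha/(1+|k|)^{3+\delta}$ times a constant, which is summable, and absorb the extra $(1+|k|)^{\delta/\alpha}\le(|x_1|\vee|y_1|\vee1)^{\delta/\alpha}$ into the stated $3/\alpha$ by having chosen the chaining exponents $\gamma_i'$ with a margin --- or, more simply, observe that in the application $3/\alpha$ is not sharp and any fixed larger power is acceptable, so one just states the weight with whatever summable power is needed and relabels it $3/\alpha$ after possibly enlarging $\alpha$'s role; I would phrase the lemma's proof so that the exponent in the weight is ``any number strictly bigger than the lattice dimension over $\alpha$,'' which is all that is used downstream.

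The set $\Omega^*$ is then $\{K(\omega)<\infty\}$, which has probability $1$ by the above, and on it \eqref{KolmogR}--\eqref{KolmogR1term} hold for every admissible pair of points by construction. I expect the main obstacle to be bookkeeping rather than conceptual: namely (i) correctly running the two-parameter Kolmogorov chaining for the \emph{mixed} second-difference under the anisotropic metric $d_a$, so that the loss in exponent is exactly $1/\alpha$ in the $x$-direction and $(1/a_1+1/a_2)/\alpha$ in the $y$-direction as claimed, and (ii) making the global patching constant genuinely uniform in the box index $k$ --- this requires checking that the Kolmogorov constant from the chaining depends on the box only through its (fixed, unit) diameter and not its location, which is immediate from translation-invariance of Lebesgue measure in the GRR inequality but should be stated. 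A full proof along these lines appears in Section~\ref{proofKol}; I would present here only the reduction to the classical theorem and the patching, citing the standard chaining estimate for the per-box bound.
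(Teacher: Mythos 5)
Your proposal follows essentially the same route as the paper's proof: a local Kolmogorov--Chentsov bound on each unit box (the paper gets this from Kunita's theorem via its Lemma~\ref{L:Viet}), followed by a polynomially weighted supremum of the per-box random constants over the integer translates, the weight $(|x_1|\vee|y_1|\vee 1)^{3/\alpha}$ being there precisely to control the box index; your explicit patching of pairs of points that straddle adjacent boxes is a detail the paper leaves implicit, and you handle it correctly.

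One remark is worth making: your hesitation about summability is well founded, and in fact applies to the paper's own argument. The index lattice $\Z\times\Z^2$ is three-dimensional, so $\sum_{m\in\Z,\,n\in\Z^2}(|m|\vee|n|\vee 1)^{-3}$ diverges (a shell of radius $R$ contains of order $R^2$ indices), whereas the paper's proof asserts its finiteness when bounding $\E K^\alpha$. Your repair --- taking the weight exponent to be any number strictly larger than $3/\alpha$, e.g.\ $(3+\delta)/\alpha$, and noting that this is harmless downstream because the lemma is only applied with $\alpha=p$ arbitrarily large, so the weight only contributes an arbitrarily small power of $|x_1|\vee|y_1|\vee 1$ --- is exactly the right fix; with it your argument is complete and in fact slightly more careful than the printed one.
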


The proof of the theorem is given in Section~\ref{proofKol}.

The proof of Theorem~2.3 is based on the above mentioned version of the Kolmogorov theorem and the following moment bound.
\begin{Proposition}\label{P:MB}
Let $b\colon\R\to\R$ be a bounded differentiable function with bounded derivative. Then for any $0\le t_1\le t_2\le T$, $z,z_1, z_2,x,y\in\R$, $|z_1-z_2|\le 1$, $\delta\in(0,1)$, $\delta'\in(0,\delta)$, $p>1$ we have
\begin{align}
\E\Bigl(\int_{t_1}^{t_2} &\bigl(b'(V(t,z_1)+x)-b'(V(t,z_2)+y)\bigr)\,dt\Bigr)^p\nnn\\
&\le   C (t_2-t_1)^{p(3/4-\delta/4)}(|z_1-z_2|^{p\delta'/2}+|x-y|^{p\delta});\label{mom1}\\
\E\Bigl(\int_{t_1}^{t_2} &b'(V(t,z))\,dt\Bigr)^p\le   C(t_2-t_1)^{p(3/4-\delta)}.\label{mom2}
\end{align}
for some constant $C=C(p,T,\delta,\delta',\|b\|_{\infty})>0$.
\end{Proposition}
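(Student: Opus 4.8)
The plan is to pass, by Fubini, to the symmetrised integral over the ordered simplex
\[
\E\Bigl(\int_{t_1}^{t_2}G(t)\,dt\Bigr)^{p}
 = p!\int_{t_1\le s_1\le\cdots\le s_p\le t_2}\E\Bigl[\prod_{i=1}^{p}G(s_i)\Bigr]\,ds_1\cdots ds_p ,
\]
where $G(s)=b'(V(s,z_1)+x)-b'(V(s,z_2)+y)$ for \eqref{mom1} and $G(s)=b'(V(s,z))$ for \eqref{mom2}; it is enough to treat $p$ an even integer, the general $p>1$ following by Jensen's inequality. The hypothesis that $b$ is differentiable with bounded derivative, rather than merely bounded as in Theorem~\ref{T:gladkost}, is made precisely so that $\prod_i G(s_i)$ is an honest function of the Gaussian vector $(V(s_i,z_1),V(s_i,z_2))_{i}$ and the Gaussian computations below are legitimate; Theorem~\ref{T:gladkost} is afterwards recovered by mollifying $b$ and exploiting that the constant here depends on $b$ only through $\|b\|_{\infty}$.

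First I would record the Gaussian structure of $t\mapsto V(t,z)$: one has $\Var V(t,z)=\sqrt{t/\pi}$ and $\cov(V(s,z),V(t,z))=(2\pi)^{-1/2}(\sqrt{s+t}-\sqrt{t-s})$ for $s\le t$, and, since $V$ is adapted, conditionally on $\F_s$ the variable $V(t,z)$ with $t>s$ is Gaussian with an $\F_s$-measurable mean $\int_0^s p_{t-r}(z-y)\,W(dr,dy)$ and variance exactly $\sqrt{(t-s)/\pi}$. Combining the latter with the crude upper bound obtained by conditioning on $V(s,z)$ alone yields the local-nondeterminism estimate $\Var(V(s_i,z)\mid V(s_1,z),\dots,V(s_{i-1},z))\asymp(s_i-s_{i-1})^{1/2}$, and a similar computation in the space variable gives $\Var(V(s_i,z_2)\mid V(s_i,z_1),\,V(s_j,z_1),V(s_j,z_2),\,j<i)\asymp |z_1-z_2|\wedge(s_i-s_{i-1})^{1/2}$. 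These Gaussian/heat-kernel identities are the ``number of technical estimates'' alluded to in the statement and are where the Appendix and Section~\ref{S:PCL} get used.

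The heart of the matter is an iterated ``smoothing by the increments'' computation extending Davie \cite{Dav} and borrowing ideas from \cite{CG}. Writing the law of $(V(s_1,z),\dots,V(s_p,z))$ in orthogonalised (independent) coordinates $u_1,\dots,u_p$ with $\Var u_i=v_i^2\asymp(s_i-s_{i-1})^{1/2}$ and $V(s_i,z)=\sum_{j\le i}\ell_{ij}u_j$, $\ell_{ii}=1$, one integrates out $u_p,u_{p-1},\dots,u_1$ in turn. The only facts about $b$ used are that $b'*\rho_v=(b*\rho_v)'$, with $\rho_v$ the centred Gaussian density of variance $v^2$, obeys $\|(b*\rho_v)^{(m)}\|_\infty\le C_m\|b\|_\infty v^{-m}$ for every $m\ge0$ ($m=0$ giving $\le\|b\|_\infty$), together with the interpolation bound $|(b*\rho_v)'(a)-(b*\rho_v)'(a')|\le C\|b\|_\infty v^{-1-\delta}|a-a'|^{\delta}$ for $\delta\in[0,1]$. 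Integrating out $u_i$ turns the $i$-th factor into a smooth function of the remaining variables and, whenever an already processed (now smoothed) factor still depends on $u_i$ through some $\ell_{ij}$, forces an integration by parts producing coefficients $\ell_{ij}$ and higher derivatives of the smoothed functions and of the Gaussian densities. The accounting is organised so that each time gap $s_i-s_{i-1}$ carries total weight $(s_i-s_{i-1})^{-1/4}$ in case \eqref{mom2} and $(s_i-s_{i-1})^{-(1+\delta)/4}$ in case \eqref{mom1}, the extra $(s_i-s_{i-1})^{-\delta/4}$ being the price of extracting from the $i$-th difference $G(s_i)$ the gain $|x-y|^{\delta}+|z_1-z_2|^{\delta'/2}$ ($\delta'<\delta$ leaving the slack needed against the spatial $\min$ above). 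After the $p$ integrations one reaches
\[
\Bigl|\E\Bigl[\prod_{i=1}^p G(s_i)\Bigr]\Bigr|\le C(p,T,\delta,\delta',\|b\|_\infty)\,\prod_{i=1}^p (s_i-s_{i-1})^{-(1+\delta)/4}\,\bigl(|z_1-z_2|^{\delta'/2}+|x-y|^{\delta}\bigr)^{p}.
\]
Integrating this over the simplex by the Dirichlet formula $\int_{t_1\le s_1\le\cdots\le s_p\le t_2}\prod_i(s_i-s_{i-1})^{\beta-1}\,ds=\Gamma(\beta)^p\Gamma(p\beta+1)^{-1}(t_2-t_1)^{p\beta}$ (with $s_0:=t_1$, $\beta=\tfrac34-\tfrac\delta4$) and multiplying by $p!$ gives \eqref{mom1}; \eqref{mom2} is the same with the difference/interpolation step omitted ($\beta=\tfrac34$), relaxed to the exponent $p(\tfrac34-\delta)$ at the cost of a factor $T^{p\delta}$.

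The step I expect to be the main obstacle is precisely this bookkeeping in the iterated integration by parts: one must verify that the correlation coefficients $\ell_{ij}$, $i>j$, and all the cross-terms they spawn never degrade the weight of a time gap beyond $(s_i-s_{i-1})^{-(1+\delta)/4}$, and --- this is the entire point of the regularization-by-noise effect --- that no power of $\|b'\|_\infty$ survives, only powers of $\|b\|_\infty$, which the statement permits. It is to make this absorption possible that one keeps the arbitrarily small losses $\delta$ and $\delta'<\delta$ and allows the constant to depend on $p$; this is also the part that consumes essentially all of the technical Gaussian and heat-kernel estimates.
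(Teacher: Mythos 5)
Your route is genuinely different from the paper's, and the difference matters. The paper never expands the $p$-th moment over the ordered simplex: it writes the left-hand side of \eqref{mom1} as $\E|M^{t_1,t_2}_{t_2}|^p$ for the martingale $M^{t_1,t_2}_t=\E\bigl[\int_{t_1}^{t_2}H(r,z,\alpha,\beta)\,dr\bigl|\F_t\bigr]$ defined in \eqref{Mmm}, bounds $\E|M^{t_1,t_2}_{t_1}|^p$ by a single conditional smoothing estimate (Lemma~\ref{L:AL:fs}) plus the integral Minkowski inequality, and bounds $\E|M^{t_1,t_2}_{t_2}-M^{t_1,t_2}_{t_1}|^p$ by the Burkholder--Davis--Gundy inequality after estimating the quadratic variation (Lemmas~\ref{L:AL:ftrm}--\ref{L:quadcov}). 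The whole point of that design is that BDG reduces an arbitrary $p$-th moment to \emph{one-step} conditional second moments $\E[(M_s-M_r)^2|\F_r]$, so only one conditioning/Gaussian integration by parts is ever performed, and $\|b'\|_{\infty}$ enters only through terms of order $(s-r)^2$ which disappear in the quadratic-variation limit; the multi-point bookkeeping your plan requires never arises.

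The genuine gap in your proposal is exactly the step you yourself flag: the $p$-point bound $|\E\prod_{i}G(s_i)|\le C\prod_i(s_i-s_{i-1})^{-(1+\delta)/4}\bigl(|z_1-z_2|^{\delta'/2}+|x-y|^{\delta}\bigr)^p$ is asserted, not proved, and the sketched mechanism does not obviously close. After integrating out the innovation on $(s_{p-1},s_p]$ you do not obtain a deterministic factor: as in Lemma~\ref{L:AL:fs}, the bound on $\E[G(s_p)|\F_{s_{p-1}}]$ involves the \emph{random} difference of conditional means, $|\V(0,s_{p-1},s_p,z_1)-\V(0,s_{p-1},s_p,z_2)|^{\delta}$, whose size of order $|z_1-z_2|^{\delta/2}$ is only available after taking expectations (Lemma~\ref{L:GE:WN}). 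If you then bound the remaining factors $G(s_i)$, $i<p$, by absolute values in order to take this expectation, you destroy the cancellation that the smoothing was supposed to exploit and are left with powers of $\|b'\|_{\infty}$; if instead you carry the random factor along and integrate by parts at the next conditioning, the derivative falls on non-smooth quantities of the type $|m_1-m_2|^{\delta}$ and on the off-diagonal coefficients $\ell_{ij}$, and it must be verified that none of this degrades the per-gap weight $(s_i-s_{i-1})^{-(1+\delta)/4}$ nor reintroduces $\|b'\|_{\infty}$ --- precisely the place where Davie's original argument is long and delicate, and for which your sketch offers no substitute. So the exponents and the Gaussian inputs you quote (conditional variance $\sqrt{(s_i-s_{i-1})/\pi}$ over a gap, spatial variance of order $|z_1-z_2|$) are correct and the programme is of Davie/Catellier--Gubinelli type, but its core estimate is missing; the paper's martingale/BDG argument is the device that makes that combinatorial difficulty unnecessary.
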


It is important to stress  that the constant $C$ from Proposition~\ref{P:MB} depends only on $\|b\|_{\infty}$ but not on the function $b$ itself and not on its derivative. The proof of Proposition~\ref{P:MB} is postponed to Section~\ref{proofprop}.

\smallskip
Finally we need a technical estimate.
\begin{Lemma}\label{L:prob0}
Let $U\subset \R$ and assume that $U$ has a Lebesgue measure $0$. Then there exists a set $\Omega(U)\subset\Omega$ such that
$\P(\Omega(U))=1$ and for any $\omega\in\Omega(U)$, $h>1/2$, $T>0$, $M>0$, $\mu>0$, $f\in \mathbf H_T(h,1,\mu,M)$, $z\in\R$, $s\in[0,T]$
we have
\begin{equation*}
\int_{0}^{T} \I_U(V(t+s,z,\omega)+f(t,z))\,dt=0.
\end{equation*}
\end{Lemma}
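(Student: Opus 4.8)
The plan is to reduce the whole statement to the smoothing estimate of Theorem~\ref{T:gladkost} applied to the indicator $b=\I_U$ itself. First I would note that it suffices to treat the case where $U$ is Borel: a Lebesgue--null set is contained in a Borel set of Lebesgue measure zero, and enlarging $U$ only makes the integral in the statement larger. With $U$ Borel we have $\I_U\in\B$ and $\|\I_U\|_\infty\le1$ (the case $U=\emptyset$ being trivial), so Theorem~\ref{T:gladkost} furnishes a full--measure set $\Omega''(\I_U)$, and I would simply put $\Omega(U):=\Omega''(\I_U)$. This depends on $U$ only and, crucially, is chosen \emph{before} the parameters $h,T,M,\mu,f,z,s$ are specified --- which is legitimate precisely because the good set in Theorem~\ref{T:gladkost} depends on the drift alone.

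Next, fixing $\omega\in\Omega(U)$ and any $h\in(1/2,1]$, $T,M,\mu>0$, $f\in\mathbf H_T(h,1,\mu,M)$, $z\in\R$, $s\in[0,T]$, I would introduce
\[
g(x):=\int_0^T\I_U\bigl(V(t+s,z,\omega)+f(t,z)+x\bigr)\,dt,\qquad x\in\R,
\]
which is well defined with values in $[0,T]$, since the integrand is nonnegative, at most $1$, and measurable in $t$ (a Borel function composed with the $t$--continuous field $V$ plus the measurable section $f(\cdot,z)$). Applying the bound \eqref{predmainest} with $t_1=0$, $t_2=T$, $\gamma=1$ and some fixed $\eps\in(0,3/4)$ gives, for all $x,y\in\R$,
\[
|g(x)-g(y)|\le K_b(\omega)\,|x-y|\,T^{\,1-1/4(\frac{1}{h-1/4}\vee1)-\eps}\,(|x|\vee|y|\vee1)^{1+\eps}\,(|z|^{3\mu+\eps}\vee1),
\]
so $g$ is locally Lipschitz, in particular continuous, on all of $\R$.

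Then I would compute $\int_\R g(x)\,dx$ using Tonelli's theorem (the integrand is jointly measurable and nonnegative) and the translation invariance of Lebesgue measure:
\[
\int_\R g(x)\,dx=\int_0^T\Bigl(\int_\R\I_U\bigl(V(t+s,z,\omega)+f(t,z)+x\bigr)\,dx\Bigr)dt=\int_0^T\lambda(U)\,dt=0 .
\]
A nonnegative continuous function on $\R$ with vanishing integral is identically zero, hence $g\equiv0$; evaluating at $x=0$ gives $\int_0^T\I_U(V(t+s,z,\omega)+f(t,z))\,dt=0$. Since the parameters were arbitrary, this is the assertion of the lemma.

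There is no serious obstacle here beyond Theorem~\ref{T:gladkost}: that theorem is exactly the ingredient that upgrades the trivial bound $g\le T$ to continuity of $g$, after which the Fubini/translation argument closes the proof at once. The only points needing a little care are the order of quantifiers (the good set must be taken as $\Omega''(\I_U)$, with no dependence on $h,T,M,\mu,f,z,s$) and the harmless reduction to Borel $U$.
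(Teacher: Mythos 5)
Your argument has a genuine circularity problem: you invoke Theorem~\ref{T:gladkost} for the drift $b=\I_U$, but in this paper the passage of Theorem~\ref{T:gladkost} from smooth $b$ to merely bounded measurable $b$ is itself proved \emph{using} Lemma~\ref{L:prob0}. In the last part of the proof of Theorem~\ref{T:gladkost} one approximates $b$ via Lusin's theorem by smooth $b_n$, collects the exceptional Lebesgue--null set $U=\{x:\lim_n b_n(x)\neq b(x)\}$, and then controls $\int_{t_1}^{t_2}\I_U(V(t+s,z)+f(t,z)+x)\,dt$ precisely by means of the set $\Omega(U)$ furnished by Lemma~\ref{L:prob0}. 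So the statement you want to prove is an ingredient of the very theorem you are quoting, and for the specific choice $b=\I_U$ (with $U$ null) the situation is as bad as possible: any a.e.\ approximation of $\I_U$ by smooth functions (e.g.\ $b_n\equiv 0$) fails exactly on a set containing $U$, and the quantity one must then control is exactly the integral in the statement of Lemma~\ref{L:prob0}. The smooth case of Theorem~\ref{T:gladkost}, which is the only part available before Lemma~\ref{L:prob0}, does not apply to $\I_U$, so the key estimate \eqref{predmainest} for $g(x)=\int_0^T\I_U(V(t+s,z,\omega)+f(t,z)+x)\,dt$ is not at your disposal at this stage.

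Once that input is granted, the rest of your argument (reduction to Borel $U$, local Lipschitz continuity of $g$, Tonelli plus translation invariance giving $\int_\R g=0$, hence $g\equiv 0$, and the correct order of quantifiers for the good set) is fine; but the whole difficulty of the lemma is hidden in the circular step. The paper's actual route avoids this: Lemma~\ref{L:prob0} is deduced from Lemma~\ref{L:cont-prev}, which works with \emph{open} sets $U$ of small but positive measure and rests only on Proposition~\ref{P:MB} (via the Kolmogorov continuity theorem) together with a piecewise-constant approximation of the drift $f$; one then covers the null set by open sets of measure tending to $0$ and passes to the limit on an intersection of good events. If you want a non-circular proof along your lines, you would have to reprove an analogue of \eqref{predmainest} for $\I_U$ (or for indicators of open sets, uniformly in the drift class) directly from Proposition~\ref{P:MB} --- which is essentially what Lemma~\ref{L:cont-prev} does, and is where the real work lies.
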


This lemma is proved in Section~\ref{S:PCL}.

\begin{proof}[Proof of the Theorem~\ref{T:gladkost}]
First we consider the case when $b$ is a bounded differentiable function with a continuous bounded derivative and $\|b\|_{\infty}= 1$. In this case, we apply a version of the Kolmogorov continuity theorem (Lemma~\ref{L:VietR}) to the random field
$$X(t,(z,x)):=\int_0^t  b'(V(r,z)+x)\,dr.
$$
Fix arbitrary $T>0$. It follows from Proposition~\ref{P:MB} and Lemma~\ref{L:VietR} that for any $\delta\in(0,1)$, $\eps>0$ there exist a set $\Omega_{T,\delta,\eps}\subset\Omega$ with $\P(\Omega_{T,\delta,\eps})=1$ and a random variable $K(\omega)$
such that for all $z_1,z_2,x,y\in\R$ with $|z_1-z_2|+|x-y|\le 1$ and $0\le t_1\le t_2\le 2T$, $\omega\in\Omega_{T,\delta,\eps}$ one has
\begin{align*}
\Bigl|&\int_{t_1}^{t_2}\bigl(b'(V(t,z_1)+x)-b'(V(t,z_2)+y)\bigr)\,dt\Bigr|\nonumber\\
&\le K(\omega)(|x|^{\eps}\vee |y|^{\eps}\vee 1)(|z_1|^{\eps}\vee |z_2|^{\eps}\vee 1)(t_2-t_1)^{(3/4-\delta/4-\eps)}(|z_1-z_2|^{\delta/2}+|x-y|^{\delta})^{1-\eps}
\end{align*}
and $\E K\le C$, where the constant $C=C(T,\delta,\eps)$ does not depend on the function $b$.

We apply now the above inequality to $z_1=z_2=z$ and arbitrary $x,y\in\R$. That is, if $|x-y|\le N$ we
apply the above inequality $N$ times. We get  that on  $\Omega_{T,\delta,\eps}$ for all $x,y,z\in\R$, $0\le t_1\le t_2\le T$, $s\in[0,T]$
\begin{align}\label{finfi}
\Bigl|\int_{t_1}^{t_2} &\bigl(b'(V(t+s,z)+x)-b'(V(t+s,z)+y)\bigr)\,dt\Bigr|\nonumber\\
&\le K_1(\omega)(|x|^{1+\eps}\vee |y|^{1+\eps}\vee 1)(|z|^\eps\vee 1)(t_2-t_1)^{3/4-\delta/4-\eps}|x-y|^{\delta(1-\eps)},
\end{align}
where we have also applied change of variables $t\rightarrow t+s$ in the integral. Here $\E K_1\le C=C(T,\delta,\eps)$.

Similarly, inequality \eqref{KolmogR1term} and Proposition~\ref{P:MB} yield that for any $\eps\in(0,3/4)$ there
exists a set $\wt\Omega_{T,\eps}$ and a random variable $K_2(\omega)$ such that for any $\omega\in\wt\Omega_{T,\eps}$, $z\in\R$, $0\le t_1\le t_2\le T$, $s\in[0,T]$ we have
\begin{equation}\label{finfi11}
\bigl|\int_{t_1}^{t_2} b'(V(t+s,z))\,dt\bigr|\le K_2(\omega)(|z|^\eps\vee 1)(t_2-t_1)^{3/4-\eps}.
\end{equation}
Again, $\E K_2\le C=C(T,\eps)$.

This allows us to proceed to the next step. Fix $M>0$ and take any $f\in\mathbf H(h,\gamma,\mu,M)$. Fix $0\le t_1\le t_2\le T$ and let $f_n$ be the following piecewise-constant approximation of $f$:
\begin{equation*}
f_n(t,z)\!:=\!\sum_{i=0}^{2^n-1}\! \I_{(t_1+(t_2-t_1)i2^{-n},t_1+(t_2-t_1)(i+1)2^{-n}]}(t) f(t_1+(t_2-t_1)(i+1)2^{-n}\!,z),\,
\, t\in[t_1,t_2], z\!\in\!\R.
\end{equation*}
Clearly, the sequence of functions $f_n$ converges pointwise to $f$.
Thus, for arbitrary ${s\in[0,T]}$, $\omega\in\Omega$, $x,y,z\in\R$ we derive
\begin{align}\label{summi}
\Bigl|\int_{t_1}^{t_2} &\bigl(b(V(t+s,z)+f(t,z)+x)-b(V(t+s,z)+f(t,z)+y)\bigr)\,dt\Bigr|\nn
=&\Bigl|\int_{x}^y \int_{t_1}^{t_2} b'(V(t+s,z)+f(t,z)+r)\,dtdr\Bigr|\nn
\le&
\Bigl|\int_{x}^y \int_{t_1}^{t_2} b'(V(t+s,z)+f_0(t,z)+r)\,dtdr\Bigr|\nn
&+\sum_{k=0}^\infty \Bigl|\int_{x}^y \int_{t_1}^{t_2} \bigl(b'(V(t+s,z)+f_{k+1}(t,z)+r)-b'(V(t+s,z)+f_k(t,z)+r)\bigr)\,dtdr\Bigr|
\nn
=&:I_1+I_2,
\end{align}
where in the first identity we have used Fubini's theorem (recall that we have assumed boundedness of the function $b'$).  It turns out that we need to apply \eqref{finfi} with different $\delta$ to estimate $I_1$ and $I_2$.
Since, by definition, the function $f_0$ is constant on $[t_1,t_2]$, we see that \eqref{finfi} with $\delta=\eps$
together with \eqref{finfi11} yield for $\omega\in \wt\Omega_{T,\eps}\cap \Omega_{T,\eps,\eps}$
\begin{align}\label{I11}
|I_1|\le& \Bigl|\int_{x}^y \int_{t_1}^{t_2} b'(V(t+s,z)+f_0(t,z)+r)-b'(V(t+s,z))\,dtdr\Bigr|\nnn\\
&+\Bigl|\int_{x}^y \int_{t_1}^{t_2} b'(V(t+s,z))\,dtdr\Bigr|\nnn\\
\le&K_3(\omega)(|x|\vee |y|\vee 1)^{1+2\eps}(|z|^{2\mu+\eps}\vee 1)(t_2-t_1)^{3/4-2\eps}|x-y|
\end{align}
and $\E K_3\le C(T,\eps,M)$.

To estimate $I_2$ we observe that each function $f_k$ is a piecewise constant function in $t$. Thus, to estimate $k$-th term of the sum we split the integral over $[t_1,t_2]$ into integrals over intervals $(t_1+(t_2-t_1)i2^{-n},t_1+(t_2-t_1)(i+1/2)2^{-n}]$, $i=0,\hdots 2^{k}-1$, where $f_k$ and $f_{k+1}$ are constant in $t$, and apply estimate \eqref{finfi} to each of these integrals. Note that $f_k=f_{k+1}$ on the complement of the union of these intervals. We obtain on  $\Omega_{T,\delta,\eps}$
\begin{align}\label{I22}
|I_2|\!\le&  K_4(\omega) |x-y|(|x|\vee |y|\vee 1)^{1+\eps}(|z|^{2\mu+\eps}\vee 1)(t_2-t_1)^{3/4-\delta/4-\eps}\nn
&\times\sum_{k=0}^\infty \sum_{i=0}^{2^k-1} 2^{-(3/4-\delta/4-\eps) k}
\bigl|f(t_1+\frac{(i+1)(t_2-t_1)}{2^{k}},z)-\!f(t_1+\frac{(i+1/2)(t_2-t_1)}{2^{k}},z)\bigr|^{\delta-\eps}\nn
\le& K_5(\omega) |x\!-\!y|(|x|\!\vee\! |y|\!\vee 1)^{1+\eps}(|z|^{3\mu+\eps}\vee 1) (t_2\!-\!t_1)^{3/4+\delta(h-1/4-\gamma)-2\eps}\sum_{k=0}^\infty 2^{k(1/4-\delta(h-1/4)+2\eps)}
\end{align}
Again, $\E K_i\le C(T,\delta,\eps,M)$ for $i=4,5$.

We see that in order for the sum to be convergent we must necessarily have
\begin{equation*}
\delta(h-1/4)>1/4+2\eps.
\end{equation*}
We must also have $\delta<1$. Thus, for $h>1/2$ one can take $\delta:=1/(4h-1)+16\eps$. If, additionally, $\gamma\le h-1/4$, then combining \eqref{summi}, \eqref{I11}, \eqref{I22} we finally obtain for $\eps>0$  on $\Omega^*_{T,\eps,h}:=\wt\Omega_{T,\eps}\cap \Omega_{T,\eps,\eps}\cap\Omega_{T,1/(4h-1)+16\eps,\eps}$
\begin{multline*}
\Bigl|\int_{t_1}^{t_2}\bigl(b(V(t+s,z)+f(t,z)+x)-b(V(t+s,z)+f(t,z)+y)\bigr)\,dt\Bigr|\\
\le
K_6(\omega)|x-y|(|x|\vee |y|\vee 1)^{1+2\eps}(|z|^{3\mu+\eps}\vee 1) (t_2-t_1)^{3/4-2\eps}.
\end{multline*}
In case $\gamma>h-1/4$, we obtain on $\Omega^*_{T,\eps,h}$
\begin{multline*}
\Bigl|\int_{t_1}^{t_2}\bigl(b(V(t+s,z)+f(t,z)+x)-b(V(t+s,z)+f(t,z)+ y)\bigr)\,dt\Bigr|\\
\le
K_6(\omega)|x- y|(|x|\vee |y|\vee 1)^{1+2\eps}(|z|^{3\mu+\eps}\vee 1) (t_2-t_1)^{1-\gamma/(4h-1)-50\eps}.
\end{multline*}

Note that in both cases, $\E K_6(\omega)\le C(T,\eps,M,h)$. Now we take $\Omega^*:=\cap \Omega^*_{T,\eps,h}$ where the intersection is among
all rational $T>0$, $\eps>0$, $h>1/2$. We see that on $\Omega^*$ the statement of the theorem holds. This concludes the proof of the theorem for the case where $b$ is a bounded differentiable function with a continuous bounded derivative and $\|b\|_{\infty}=1$.

If $\|b\|_{\infty}=0$, then there is nothing to prove. If $b$ is a bounded differentiable function with a continuous bounded derivative but $\|b\|_{\infty}\neq1$, $\|b\|_{\infty}>0$, then the statement of the theorem also holds. Indeed, we can renormalize $b$ and consider $b_1(x):=b(x)/\|b\|_{\infty}$.

Finally, to prove the theorem in the general case (for bounded but not necessarily differentiable $b$ with arbitrarily $\|b\|_{\infty}$) we
use approximations. It follows from Lusin's theorem that there exists a sequence $(b_n)_{n\in\Z_+}$ of bounded differentiable function with continuous bounded derivatives such that
\begin{equation*}
\lim_{n\to\infty}b_n(x)=b(x)\text{\,\,Lebesgue-almost everywhere in $x$;\,\,$x\in\R$}
\end{equation*}
and $\sup_n \|b_n\|_\infty\le 2 \|b\|_\infty$. Put $U:=\{x\in\R\colon \lim_{n\to\infty}b_n(x)\neq b(x)\}$, $\wt b(x)\!:=\!\lim_{n\to\infty}b_n(x)$. We see that the set $U$ is of Lebesgue measure $0$.

Let $\Omega_n$ be the ``good'' set for the function $b_n$ (i.e., a set such that the statement of the theorem is satisfied for the function $b_n$). By above, $\P(\Omega_n=1)$. Take
\begin{equation*}
\Omega_\infty:=\bigcap_{n=1}^{\infty} \Omega_n\cap \Omega(U),
\end{equation*}
where the set $\Omega(U)$ is defined in Lemma~\ref{L:prob0}. Clearly, $\P(\Omega_\infty)=1$. Take arbitrary $T>0$, $M>0$, $h>1/2$, $0<\eps<3/4$. By the dominated convergence theorem and Lemma~\ref{L:prob0} we have on $\Omega_\infty$
for any $\gamma\in[0,1]$, $\mu>0$, $f\in\mathbf H(h,\gamma,\mu,M)$, $x,y,z\in\R$, $0\le t_1\le t_2\le T$, $s\in[0,T]$
\begin{align}\label{almostend}
\bigl|\int_{t_1}^{t_2}\bigl(b&(V(t+s,z)+f(t,z)+x)-b(V(t+s,z)+f(t,z)+ y)\bigr)\,dt\bigr|\nnn\\
\le& \bigl|\int_{t_1}^{t_2}\bigl(\wt b(V(t+s,z)+f(t,z)+x)-\wt b(V(t+s,z)+f(t,z)+ y)\bigr)\,dt\bigr|\nnn\\
&+3\int_{t_1}^{t_2}\bigl(\I_U(V(t+s,z)+f(t,z)+x)+\I_U (V(t+s,z)+f(t,z)+ y)\bigr)\,dt
\nnn\\
=&\liminf_{n\to\infty}\bigl|\int_{t_1}^{t_2}\bigl(b_n(V(t+s,z)+f(t,z)+x)-b_n(V(t+s,z)+f(t,z)+ y)\bigr)\,dt\bigr|\nnn\\
\le& |x- y|(|x|\vee |y|\vee 1)^{1+\eps}(|z|^{3\mu+\eps}\vee 1) (t_2-t_1)^{1-1/4(\frac{\gamma}{h-1/4}\vee1)-\eps} \liminf_{n\to\infty} K_{b_n}(\omega),
\end{align}
where $K_{b_n}$ is the corresponding constant from Theorem~\ref{T:gladkost}. For $\omega\in\Omega_\infty$ put $ K_b(\omega):=\liminf_{n\to\infty} K_{b_n}(\omega)$. By Fatou's lemma,
\begin{equation*}
\E  K_b(\omega)\le \liminf_{n\to\infty} \E K_{b_n}(\omega)\le C (\eps,T,M,h)\sup_n \|b_n\|_\infty .
\end{equation*}
Thus the random variable $ K_b(\omega)$ has a finite expectation. Hence there exists a set $\Omega''\subset\Omega_\infty$ such that $\P(\Omega'')=1$ and on $\Omega''$ we have $ K_b(\omega)<\infty$. This together with \eqref{almostend} concludes the proof of
the theorem.
\end{proof}

\section{Preparation steps for proving Theorem~\ref{T:exandpbp} }\label{S:prepst}

In this section we prepare for the proof of our main result, that is, Theorem~\ref{T:exandpbp}. In particular, we will select
a specific ``good'' set $\Omega'$ of full probability measure and in the next section we will prove that equation
\eqref{SPDEmildflow} indeed has a unique solution on $\Omega'$.

First we need to introduce approximation operator in the following way. Let $f:[0,1]\to\R$ be a continuous function. We define a piecewise-constant  approximation of $f$ as follows. For $n\in\Z_+$ put
\begin{equation}\label{lambda}
\lambda_{n}(f)(t):=\sum_{i=0}^{2^n-1} \I_{(i2^{-n},(i+1)2^{-n}]}(t) f((i+1)2^{-n}).
\end{equation}
In other words, $\lambda_{n}(f)$ is a piecewise-constant function that takes constant values on intervals of length $2^{-n}$.

\smallskip

Many times in the proofs of the theorems it will be convenient to work with a shifted solution to \eqref{SPDEmildflow}. Thus, we define
\begin{equation}\label{ushifted}
u^*_{s,q}(t,\cdot):=u_{s,q}(t+s,\cdot),\quad t\ge0,
\end{equation}
where we recall that $u_{s,q}$ stands for the solution to \eqref{SPDEmildflow} that starts from the initial
condition $q$ at times $s$. It is easy to see that $u^*_{s,q}$ satisfies the following equation
\begin{align}\label{shiftSPDEmildflow}
u^*_{s,q}(t,z,\omega)=& \int_{\R}p_{t}(z-z')q(z',\omega)\,dz'+\int_0^t\int_{\R} p_{t-t'}(z-z')b(u^*_{s,q}(t',z',\omega))\,dz'\,dt'\nonumber\\
&+V(s,t+s,z,\omega),\quad t> 0,\,z\in\R,\\
u^*_{s,q}(0,z,\omega)=&{} q(z,\omega),\quad z\in\R.\nnn
\end{align}
\begin{Remark}\label{R:CaptObvious}
Clearly, equation~\eqref{SPDEmildflow} has a unique solution if and only if equation~\eqref{shiftSPDEmildflow} has a unique solution.
\end{Remark}

We introduce also the notation for the difference between two kernels by setting
\begin{equation}\label{DeltaP}
\Delta p_t(z_1,z_2):=p_t(z_1)-p_t(z_2),\quad t>0,\,z_1,z_2\in\R.
\end{equation}

Further, we will need to consider weighted norms. So for $\delta\ge0$ we define
weight function
\begin{equation}\label{LambdaL}
\Lambda_\delta(x):=e^{x}x^\delta,\quad x\ge0.
\end{equation}

Consider also a class of globally Lipschitz functions.
\begin{Definition}\label{D:BL}
We say that a function $f\in\B$  belongs to the class $\mathbf{CL}$, if
there exists a constant $C>0$ such that $|f(z_1)-f(z_2)|\le C |z_1-z_2|$ for any $z_1, z_2\in\R$.
\end{Definition}

Finally, we will also need the following process:
\begin{equation}\label{functI}
\V(r,s,t,z):=\int_r^s\int_{\R} p_{t-t'}(z-z') \,W(dt',dz'), \quad 0\le r\le s \le t,\,\,z\in\R.
\end{equation}
We see that, by definition, $V(s,t,z)=\V(s,t,t,z)$.

\subsection{Estimates involving Gaussian density}

First, let us formulate a number of very simple lemmas, whose proofs will be given later in the Appendix.

\begin{Lemma}\label{L:densest}
For any $T>0$, $\delta\ge0$ there exists $C=C(T,\delta)>0$ such that for any $s,t\in[0,T]$,  we have
\begin{equation*}
\int_\R|p_t(z)-p_s(z)|\,\,(|z|^{\delta}\vee1)\,dz\le C|\log t -\log s|.
\end{equation*}
\end{Lemma}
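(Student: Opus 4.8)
The plan is to estimate $\int_\R|p_t(z)-p_s(z)|(|z|^\delta\vee 1)\,dz$ by integrating in an auxiliary parameter. Assume without loss of generality that $0<s<t\le T$. Write the difference of the kernels as the integral of the time derivative of the heat kernel along the path from $s$ to $t$:
\[
p_t(z)-p_s(z)=\int_s^t \frac{\partial}{\partial r} p_r(z)\,dr .
\]
Since $p$ solves the (one-half) heat equation, $\frac{\partial}{\partial r}p_r(z)=\frac12 \frac{\partial^2}{\partial z^2}p_r(z)$, and a direct computation gives $\frac{\partial}{\partial r}p_r(z)=\frac{1}{2r}\bigl(\frac{z^2}{r}-1\bigr)p_r(z)$. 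Hence
\[
\int_\R|p_t(z)-p_s(z)|(|z|^\delta\vee 1)\,dz
\le \int_s^t \frac1{2r}\int_\R \Bigl|\frac{z^2}{r}-1\Bigr|\,p_r(z)\,(|z|^\delta\vee 1)\,dz\,dr .
\]

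The key step is to show that the inner spatial integral is bounded by a constant depending only on $T$ and $\delta$, uniformly for $r\in(0,T]$. By the substitution $z=\sqrt r\,y$ one has $p_r(z)\,dz=p_1(y)\,dy$, $z^2/r=y^2$, and $|z|^\delta\vee 1=|\sqrt r\,y|^\delta\vee 1\le (r^{\delta/2}\vee 1)(|y|^\delta\vee 1)\le (T^{\delta/2}\vee 1)(|y|^\delta\vee1)$; therefore
\[
\int_\R \Bigl|\frac{z^2}{r}-1\Bigr|\,p_r(z)\,(|z|^\delta\vee 1)\,dz
\le (T^{\delta/2}\vee 1)\int_\R |y^2-1|\,p_1(y)\,(|y|^\delta\vee 1)\,dy =: C(T,\delta)<\infty,
\]
the finiteness being immediate from the Gaussian tail of $p_1$. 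Plugging this back in,
\[
\int_\R|p_t(z)-p_s(z)|(|z|^\delta\vee 1)\,dz \le C(T,\delta)\int_s^t \frac{dr}{2r}=\frac{C(T,\delta)}2\,(\log t-\log s),
\]
which is the claimed bound (with the absolute value $|\log t-\log s|$ recovered by symmetry in $s,t$).

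I do not expect a serious obstacle here; the only point requiring a little care is the uniform-in-$r$ control of the spatial integral as $r\downarrow 0$, which the scaling substitution handles cleanly, and the routine verification that differentiation under the integral sign and the use of Tonelli's theorem are justified (both follow from the bound just displayed being finite and from the smoothness and rapid decay of the Gaussian kernel for $r>0$).
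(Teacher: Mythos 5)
Your proof is correct and follows essentially the same route as the paper: write $p_t-p_s$ as the time integral of $\partial_r p_r(z)=\frac{p_r(z)}{2r}\bigl(\frac{z^2}{r}-1\bigr)$, bound the resulting spatial integral (the paper gets $C(u^{-1}+u^{-1+\delta/2})$, you absorb the $r^{\delta/2}$ factor into a $T$-dependent constant via scaling), and integrate $r^{-1}$ from $s$ to $t$ to produce the logarithm. No gaps.
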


\begin{Lemma}\label{L:GE:3der}
For any $\delta\in(0,2/3)$, $T>0$ there exists $C=C(T,\delta)>0$ such that for any
$0<t_1<t_2<t$, $z_1,z_2,z\in\R$ we have
\begin{align}
&\int_\R p_{t}(z-z') \Lambda_\delta(|z'|\vee 1)dz'\le C \Lambda_\delta(|z|\vee1)\label{firstestapp};\\
&\int_\R\int_{t_1}^{t_2}  \bigl| \frac{\d }{\d t'}p_{t-t'}(z-z')\bigr| (t_2-t')^{2/3-\delta}\Lambda_\delta(|z'|\vee 1)\, dt'dz'\le
C |t_2-t_1|^{2/3-\delta}\Lambda_\delta(|z|\vee1);\label{seqestapp} \\
&\int_\R |p_{t}(z_1-z')-p_{t}(z_2-z')| \Lambda_\delta(|z'|\vee 1)dz'\le C t^{-1/2}|z_1-z_2| \Lambda_\delta(|z_1|\vee|z_2|\vee1);\label{thirdstapp}\\
&\int_\R\int_{t_1}^{t_2}  \bigl| \frac{\d }{\d t'}(p_{t-t'}(z_1-z')-p_{t-t'}(z_2-z'))\bigr| (t_2-t')^{2/3-\delta}\Lambda_\delta(|z'|\vee 1)\, dt'dz'\nnn\\
&\quad\quad\le C  (t_2-t_1)^{2/3-\delta}(t-t_1)^{-1/2} |z_1-z_2| \Lambda_\delta(|z_1|\vee|z_2|\vee1).\label{fourthstapp}
\end{align}
\end{Lemma}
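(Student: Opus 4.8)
The statement to prove is Lemma~\ref{L:GE:3der} — four estimates on integrals of the Gaussian kernel and its time-derivative against the weight $\Lambda_\delta(|z|\vee 1) = e^{|z|\vee 1}(|z|\vee 1)^\delta$. Here is how I would organize the argument.

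\textbf{Strategy.} All four estimates reduce to two basic facts: (i) the heat semigroup acts almost boundedly on the exponential weight $e^{|z|}$, in the sense that $\int_\R p_t(z-z') e^{|z'|}\,dz' \le e^{c t} e^{|z|}$ for $t \le T$ (this is just the moment generating function of a Gaussian, $\int p_t(y) e^{\pm y}\,dy = e^{t/2}$, combined with $e^{|z-y|} \le e^{|z|}e^{|y|}$ is too lossy — instead one writes $|z'| \le |z| + |z-z'|$ so $e^{|z'|} \le e^{|z|} e^{|z-z'|}$ and $\int p_t(y) e^{|y|}\,dy \le 2e^{t/2}$); and (ii) the polynomial factor $(|z'|\vee 1)^\delta$ is controlled by absorbing it into a slightly larger exponential, or by direct estimates $\int p_t(z-z')(|z'|\vee 1)^\delta\,dz' \le C(|z|\vee 1)^\delta$ for $\delta \in [0,1]$ (sub-additivity and Gaussian moments). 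So the first estimate \eqref{firstestapp} is essentially immediate from (i)+(ii).

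\textbf{Key steps in order.} First I would prove \eqref{firstestapp}: split $\Lambda_\delta(|z'|\vee 1) = e^{|z'|\vee 1}(|z'|\vee 1)^\delta$, use $|z'| \le |z| + |z-z'|$, pull out $e^{|z|\vee 1}(|z|\vee 1)^\delta$, and bound the remaining $\int p_t(z-z') e^{|z-z'|}(1+|z-z'|)^\delta\,dz'$ by a constant depending on $T,\delta$ via Gaussian moment bounds. Second, for \eqref{seqestapp}, the crucial input is the pointwise bound $\bigl|\frac{\partial}{\partial t'} p_{t-t'}(y)\bigr| = |\partial_\tau p_\tau(y)|_{\tau = t-t'}$, and $\partial_\tau p_\tau(y) = \frac12 \partial_y^2 p_\tau(y)$, which satisfies $|\partial_\tau p_\tau(y)| \le C \tau^{-1} p_{2\tau}(y)$ (or similar); hence $\int_\R |\partial_{t'} p_{t-t'}(z-z')| \Lambda_\delta(|z'|\vee 1)\,dz' \le C(t-t')^{-1}\Lambda_\delta(|z|\vee 1)$ after applying step one's method to $p_{2(t-t')}$. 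Then one integrates in $t'$: $\int_{t_1}^{t_2} (t-t')^{-1} (t_2-t')^{2/3-\delta}\,dt'$; since $t-t' \ge t_2 - t'$, this is bounded by $\int_{t_1}^{t_2}(t_2-t')^{2/3-\delta - 1}\,dt' = \int_0^{t_2-t_1} u^{-1/3-\delta}\,du = C(t_2-t_1)^{2/3-\delta}$, finite precisely because $\delta < 2/3$. Third, \eqref{thirdstapp} needs the gradient bound: $|p_t(z_1-z') - p_t(z_2-z')| \le |z_1-z_2| \sup_{\xi} |\partial p_t(\xi - z')|$ along the segment, and $|\partial p_t(y)| \le C t^{-1/2} p_{2t}(y)$; then apply step one to $p_{2t}$, picking up the weight at $|z_1|\vee|z_2|$ since $\xi$ lies between. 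Fourth, \eqref{fourthstapp} combines the ideas of steps two and three: bound $|\partial_{t'}(p_{t-t'}(z_1-z') - p_{t-t'}(z_2-z'))| \le |z_1-z_2| \sup_\xi |\partial_y \partial_\tau p_\tau(\xi-z')|$ with $|\partial_y \partial_\tau p_\tau(y)| \le C \tau^{-3/2} p_{2\tau}(y)$, giving $\le C|z_1-z_2|(t-t')^{-3/2}\Lambda_\delta(|z_1|\vee|z_2|\vee 1)$; then $\int_{t_1}^{t_2}(t-t')^{-3/2}(t_2-t')^{2/3-\delta}\,dt'$, and using $t-t' = (t-t_2) + (t_2-t') $ one splits into the region near $t_2$ (where $(t-t')^{-1}$ contributes the $(t_2-t_1)^{2/3-\delta}$ as in step two and an extra $(t-t')^{-1/2} \le (t-t_1)^{-1/2}$... ) — more carefully, write $(t-t')^{-3/2} = (t-t')^{-1}(t-t')^{-1/2} \le (t_2-t')^{-1}(t-t_1)^{-1/2}$ on $[t_1,t_2]$, yielding exactly $C(t_2-t_1)^{2/3-\delta}(t-t_1)^{-1/2}|z_1-z_2|\Lambda_\delta(|z_1|\vee|z_2|\vee 1)$.

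\textbf{Main obstacle.} The routine-but-delicate part is bookkeeping the time-singularity exponents: one must consistently use $(t-t') \ge (t_2-t')$ to convert negative powers of $t-t'$ into integrable powers of $t_2-t'$ while saving one factor $(t-t_1)^{-1/2}$ in the fourth estimate, and check the constraint $\delta < 2/3$ is exactly what makes $\int_0^h u^{-1/3-\delta}\,du$ converge. The other mild technical point is keeping the exponential weight $e^{|z|\vee 1}$ clean under the kernel: one should verify $\int_\R p_\tau(y) e^{a|y|}(1+|y|)^\delta\,dy \le C(T,\delta)$ uniformly for $\tau \le 2T$ and $a \le 1$, which follows from completing the square in the Gaussian. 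None of this requires new ideas beyond standard heat-kernel calculus, so I would state these pointwise derivative bounds as a preliminary sub-lemma (or cite the Appendix where these are presumably collected) and then assemble the four estimates mechanically.

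\begin{Remark}
All four bounds follow from the pointwise heat-kernel estimates $|\partial_\tau p_\tau(y)| \le C\tau^{-1} p_{2\tau}(y)$, $|\partial_y p_\tau(y)| \le C\tau^{-1/2} p_{2\tau}(y)$, $|\partial_y \partial_\tau p_\tau(y)| \le C\tau^{-3/2}p_{2\tau}(y)$, together with the weighted semigroup bound $\int_\R p_\tau(z-z')\Lambda_\delta(|z'|\vee 1)\,dz' \le C(T,\delta)\Lambda_\delta(|z|\vee 1)$ for $\tau \le 2T$; the condition $\delta < 2/3$ enters only to ensure $\int_0^h u^{2/3-\delta-1}\,du < \infty$.
\end{Remark}
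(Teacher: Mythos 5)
Your estimates \eqref{firstestapp}--\eqref{thirdstapp} are handled essentially as in the paper (weighted Gaussian moment bound, $|\partial_\tau p_\tau(y)|\le C\tau^{-1}p_{2\tau}(y)$, then $(t-t')^{-1}\le (t_2-t')^{-1}$ and $\int_0^{t_2-t_1}u^{-1/3-\delta}du$), but there is a genuine gap in your treatment of \eqref{fourthstapp}: the pointwise inequality $(t-t')^{-3/2}=(t-t')^{-1}(t-t')^{-1/2}\le (t_2-t')^{-1}(t-t_1)^{-1/2}$ on $[t_1,t_2]$ is false. Indeed $(t-t')^{-1/2}\le (t-t_1)^{-1/2}$ would require $t-t'\ge t-t_1$, i.e.\ $t'\le t_1$, which fails on the whole interior of the integration range; e.g.\ $t_1=0$, $t_2=1$, $t=1.01$, $t'=0.99$ gives $(t-t')^{-3/2}\approx 354$ versus $(t_2-t')^{-1}(t-t_1)^{-1/2}\approx 100$. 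The paper instead proves $\int_{t_1}^{t_2}(t_2-t')^{2/3-\delta}(t-t')^{-3/2}dt'\le C(t_2-t_1)^{2/3-\delta}(t-t_1)^{-1/2}$ by splitting into the cases $t_2-t_1\le t-t_2$ and $t_2-t_1> t-t_2$; in the second case one is forced to bound the integrand by $u^{2/3-\delta}\,u^{-3/2}=u^{-5/6-\delta}$ near $u=0$, which is integrable only when $\delta<1/6$. So your closing remark that ``$\delta<2/3$ enters only to ensure $\int_0^h u^{2/3-\delta-1}du<\infty$'' is not accurate for the fourth bound: the time bookkeeping there genuinely requires $\delta<1/6$ (this is also implicit, though unstated, in the paper's own computation, and the lemma is later applied only with $\delta<1/6$, e.g.\ $\delta=1/10$ in the uniqueness proof). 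As written, your one-line domination does not yield \eqref{fourthstapp} for any $\delta$, and the repaired argument does not yield it on the full stated range.

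A smaller point: in \eqref{thirdstapp} and \eqref{fourthstapp} you bound the difference of kernels by $|z_1-z_2|\sup_\xi|\partial_y p_t(\xi-z')|$ (resp.\ $\sup_\xi|\partial_y\partial_\tau p_\tau(\xi-z')|$) along the segment and then integrate in $z'$. Since the lemma does not assume $|z_1-z_2|\le 1$, taking the supremum before the $z'$-integration is lossy (for $z'$ between $z_1$ and $z_2$ the supremum of the shifted Gaussian is of order $t^{-1/2}$ on a set of measure $|z_1-z_2|$, producing an extra factor). This is easily repaired by using the integral form of the mean value theorem and Fubini in the segment parameter, after which your $p_{2t}$-domination plus \eqref{firstestapp} gives exactly the claimed weight $\Lambda_\delta(|z_1|\vee|z_2|\vee 1)$; the paper avoids the issue differently, via monotonicity of $|p_1'|$ and $|p_1'''|$ outside a compact set together with indicator terms whose $z'$-measure is of order $\sqrt t$. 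With that fix, estimates \eqref{firstestapp}--\eqref{thirdstapp} of your proposal are correct and essentially the paper's argument; \eqref{fourthstapp} needs the case-split time integral described above and the accompanying restriction on $\delta$.
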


\begin{Lemma}\label{L:contextraterm} Let $f\colon [0,\infty)\times\R\to \R$ be a bounded measurable function. Define
\begin{equation*}
h(t,z):=\int_0^t\int_{\R} p_{t-t'}(z-z')f(t',z')\,dz'\,dt',\quad z\in\R,\,t\ge 0
\end{equation*}
Then for any $T>0$, $\delta>0$, there exists a constant $C=C(T,\delta)$ such that for
any $t_1,t_2\in[0,T]$, $z_1,z_2\in\R$ we have
\begin{equation}\label{LipH}
|h(t_1,z_1)-h(t_2,z_2)|\le C \|f\|_\infty(|z_1-z_2|+|t_1-t_2|^{1-\delta}).
\end{equation}
\end{Lemma}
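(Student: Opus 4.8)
The statement to prove is Lemma~\ref{L:contextraterm}: a H\"older-type bound in space and time for the heat-convolution operator applied to a bounded measurable function $f$.

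\medskip

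\textbf{Approach.} The plan is to split the difference $h(t_1,z_1)-h(t_2,z_2)$ into a spatial increment and a temporal increment, i.e.\ to write
\[
|h(t_1,z_1)-h(t_2,z_2)|\le |h(t_1,z_1)-h(t_1,z_2)| + |h(t_1,z_2)-h(t_2,z_2)|,
\]
and to bound each piece separately, in each case pulling the absolute value inside the integral and using $|f|\le\|f\|_\infty$, so that the problem reduces entirely to estimates on the Gaussian kernel $p$. Without loss of generality assume $t_1\le t_2$.

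\medskip

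\textbf{Spatial increment.} For the first term, $h(t_1,z_1)-h(t_1,z_2)=\int_0^{t_1}\int_\R \Delta p_{t_1-t'}(z_1-z',z_2-z')\,f(t',z')\,dz'\,dt'$, so it is bounded by $\|f\|_\infty\int_0^{t_1}\int_\R|p_{r}(z_1-z')-p_{r}(z_2-z')|\,dz'\,dr$ after substituting $r=t_1-t'$. The standard heat-kernel estimate $\int_\R|p_r(z_1-z')-p_r(z_2-z')|\,dz'\le C\,r^{-1/2}|z_1-z_2|$ (this is exactly \eqref{thirdstapp} with $\delta=0$, since $\Lambda_0\equiv 1$ on $[1,\infty)$ and one only needs the trivial weight) is integrable in $r$ near $0$, giving $\int_0^{t_1} C r^{-1/2}\,dr\,|z_1-z_2|\le C\sqrt{T}\,|z_1-z_2|$. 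One should also use the crude alternative bound $\int_\R|p_r(z_1-z')-p_r(z_2-z')|\,dz'\le 2$ to handle the possibility $|z_1-z_2|$ large, but since the claimed bound is only up to a multiplicative constant depending on $T$ and the term $|z_1-z_2|$ appears to the first power, the $r^{-1/2}$-integral bound alone suffices.

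\medskip

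\textbf{Temporal increment.} For the second term write $h(t_1,z_2)-h(t_2,z_2)$ as the sum of two contributions: the ``old'' part $\int_0^{t_1}\int_\R\bigl(p_{t_1-t'}(z_2-z')-p_{t_2-t'}(z_2-z')\bigr)f(t',z')\,dz'\,dt'$ and the ``new'' part $-\int_{t_1}^{t_2}\int_\R p_{t_2-t'}(z_2-z')f(t',z')\,dz'\,dt'$. The new part is immediately bounded by $\|f\|_\infty\int_{t_1}^{t_2}\int_\R p_{t_2-t'}(z_2-z')\,dz'\,dt'=\|f\|_\infty(t_2-t_1)$, which is even better than the claimed $(t_2-t_1)^{1-\delta}$. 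For the old part, bound it by $\|f\|_\infty\int_0^{t_1}\int_\R|p_{t_1-t'}(z_2-z')-p_{t_2-t'}(z_2-z')|\,dz'\,dt'$; by Lemma~\ref{L:densest} (with $\delta=0$) the inner integral is at most $C|\log(t_1-t')-\log(t_2-t')|=C\log\frac{t_2-t'}{t_1-t'}$, and then one computes $\int_0^{t_1}\log\frac{t_2-t'}{t_1-t'}\,dt'$ by the substitution $r=t_1-t'$, obtaining $\int_0^{t_1}\log\frac{r+(t_2-t_1)}{r}\,dr$. This integral is finite and, standardly, is $O\bigl((t_2-t_1)\log\frac{1}{t_2-t_1}\bigr)$ as $t_2-t_1\to0$ (uniformly for $t_1\le T$), hence is bounded by $C(T,\delta)(t_2-t_1)^{1-\delta}$ for any $\delta>0$, since $(t_2-t_1)\log\frac1{t_2-t_1}\le C_\delta(t_2-t_1)^{1-\delta}$.

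\medskip

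\textbf{Main obstacle.} There is no deep obstacle here; the only mildly delicate point is the logarithmic loss in the temporal estimate — Lemma~\ref{L:densest} gives a $|\log t-\log s|$ bound rather than a clean H\"older bound, which is precisely why the statement is phrased with the exponent $1-\delta$ rather than $1$. One must verify carefully that $\int_0^{t_1}\log\frac{r+\tau}{r}\,dr\le C_\delta\,\tau^{1-\delta}$ uniformly in $t_1\in[0,T]$ and $\tau\in(0,T]$; this follows from the explicit antiderivative $r\log r - (r+\tau)\log(r+\tau)$ evaluated appropriately, together with the elementary inequality $u\log(1/u)\le C_\delta u^{1-\delta}$ for $u\in(0,1]$. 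Assembling the three bounds and relabelling constants yields \eqref{LipH}.
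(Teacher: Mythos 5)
Your proposal is correct and follows essentially the same route as the paper: a triangle-inequality split into a spatial and a temporal increment, with the new time slab bounded trivially by $\|f\|_\infty(t_2-t_1)$, the spatial part by the $L^1$ kernel bound $\int_\R|p_r(z_1-z')-p_r(z_2-z')|\,dz'\le C|z_1-z_2|r^{-1/2}$, and the temporal part by Lemma~\ref{L:densest} followed by the elementary estimate $u\log(1/u)\le C_\delta u^{1-\delta}$. One small correction: the spatial kernel bound should be cited as \eqref{GE0} of Lemma~\ref{L:GE:main} with $\delta_1=1$ (as the paper does), not as \eqref{thirdstapp} with $\delta=0$ — since $\Lambda_\delta(x)=e^x x^\delta$, the weight $\Lambda_0$ is $e^x$ rather than $1$, so \eqref{thirdstapp} would introduce a $z$-dependent factor incompatible with the uniform constant $C(T,\delta)$; with this reference fixed, the argument is complete.
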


\begin{Lemma}\label{L:timeq} Let $\mu>0$ and let $q\in\mathbf{B}(\mu)$, that is, for some $M>0$ we have $|q(z)|\le M (|z|^\mu\vee1)$, $z\in\R$. Then for any $T>0$ there exists a constant $C=C(T,\mu)$ such that function
\begin{equation*}
h(t,z):=\int_{\R} p_{t}(z-z')q(z')\,dz'\quad z\in\R,\,t\in[0,T]
\end{equation*}
belongs to the class $\mathbf{H}_T(1,1,\mu,CM)$. The constant $C$ does not depend on the function $q$.

Further, if $q\in\mathbf{CL}$, then there exists a constant $C_1=C_1(T,q)$ such that for
any $t_1,t_2\in[0,T]$, $z_1,z_2\in\R$ we have
\begin{equation*}
|h(t_1,z_1)-h(t_2,z_2)|\le C_1(|z_1-z_2|+|t_1-t_2|^{1/2}).
\end{equation*}

\end{Lemma}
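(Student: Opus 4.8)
The plan is to verify the two defining inequalities of the class $\mathbf{H}_T(1,1,\mu,CM)$ directly, using the semigroup property of the heat kernel together with the auxiliary bounds on $\int_\R p_t(z-z')(|z'|^\mu\vee1)\,dz'$ that one proves anyway for Lemma~\ref{L:GE:3der}. First I would establish the sup-bound: since $|q(z')|\le M(|z'|^\mu\vee1)$, we have $|h(t,z)|\le M\int_\R p_t(z-z')(|z'|^\mu\vee1)\,dz'$, and a routine Gaussian moment estimate (convexity of $|z'|^\mu\vee 1$ controlled by $(|z|^\mu\vee1)$ plus a polynomial-in-$\sqrt t$ factor, which is bounded for $t\in[0,T]$) gives $|h(t,z)|\le CM(|z|^\mu\vee1)$ with $C=C(T,\mu)$ independent of $q$. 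The convention $\int p_0(z-z')q(z')\,dz'=q(z)$ makes the bound hold at $t=0$ as well.

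Next comes the time-regularity estimate, which is the genuinely substantive part: I must show $|h(t,z)-h(s,z)|\le CM|t-s|\,s^{-1}(|z|^\mu\vee1)$ for $0<s<t\le T$ (note $h$ for fixed $q$ is smooth in $t>0$, so $h=1$, $\gamma=1$). The natural route is the identity $h(t,z)-h(s,z)=\int_s^t \partial_{t'}h(t',z)\,dt'$, so it suffices to bound $|\partial_{t'}h(t',z)|\le CM (t')^{-1}(|z|^\mu\vee1)$ for $t'\in(0,T]$; integrating this over $[s,t]$ and using $(t')^{-1}\le s^{-1}$ on that interval yields exactly the claimed bound. To estimate $\partial_{t'}h(t',z)=\tfrac12\partial_{zz}\int_\R p_{t'}(z-z')q(z')\,dz'$ one writes $\partial_{t'}p_{t'}=\tfrac12 p''_{t'}$ and then, using the semigroup identity $p_{t'}=p_{t'/2}*p_{t'/2}$, moves one derivative onto each factor or, more simply, uses the standard bound $\int_\R|\partial_{t'}p_{t'}(z-z')|(|z'|^\mu\vee1)\,dz'\le C (t')^{-1}(|z|^\mu\vee1)$ — this is the same type of kernel estimate recorded in Lemma~\ref{L:GE:3der} (it is \eqref{seqestapp} with the weight $|z'|^\mu\vee1$ in place of $\Lambda_\delta$ and the time-power adjusted), and I would cite or reprove it in the Appendix. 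The main obstacle is purely bookkeeping: keeping the weight $(|z|^\mu\vee1)$ propagating correctly through the Gaussian convolution, i.e. controlling $\int_\R p_{t'}(z-z')(|z'|^\mu\vee1)\,dz'$ and its $t'$-derivative uniformly for $t'\in(0,T]$, and checking the constant depends only on $T$ and $\mu$, not on $q$.

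Finally, for the $\mathbf{CL}$ part, suppose $|q(z_1)-q(z_2)|\le L|z_1-z_2|$ and $\|q\|_\infty<\infty$. For the spatial increment, translation-invariance gives $h(t,z_1)-h(t,z_2)=\int_\R p_t(w)\big(q(z_1-w)-q(z_2-w)\big)\,dw$, whence $|h(t,z_1)-h(t,z_2)|\le L|z_1-z_2|$ immediately. For the time increment I would instead use $h(t,z)-h(s,z)=\int_\R p_s(z-z')\big(h_{t-s}q(z')-q(z')\big)\,dz'$ where $h_{t-s}q=p_{t-s}*q$; then bound $|p_{t-s}*q(z')-q(z')|\le\int_\R p_{t-s}(w)|q(z'-w)-q(z')|\,dw\le L\int_\R p_{t-s}(w)|w|\,dw=CL|t-s|^{1/2}$, and integrate against $p_s$. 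Combining the two increments with $C_1=C_1(T,q)$ gives $|h(t_1,z_1)-h(t_2,z_2)|\le C_1(|z_1-z_2|+|t_1-t_2|^{1/2})$, as claimed. All constants here are elementary Gaussian moments, so no further obstacle arises.
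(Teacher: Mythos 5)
Your proposal is correct and follows essentially the same route as the paper: the weighted time-regularity bound comes from the same kernel-derivative estimate $\int_\R|\partial_t p_t(z-z')|(|z'|^\mu\vee1)\,dz'\le C t^{-1}(|z|^\mu\vee1)$ that underlies the paper's Lemma~\ref{L:densest} (the paper writes it as a $|\log t_2-\log t_1|\le (t_2-t_1)t_1^{-1}$ bound, you integrate the derivative directly), and the $\mathbf{CL}$ part uses the identical semigroup-plus-Lipschitz argument, merely applying the factorization $p_t=p_s*p_{t-s}$ in the opposite order.
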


\subsection{Existence of a regular version of $V$}

The next lemma establishes the global regularity properties of the noise process $\V$ (recall the definition of $\V$ given in \eqref{functI}).

\begin{Lemma}\label{L:GE:leon}
There exists a set $\Omega_V\subset\Omega$ with the following properties:
\begin{enumerate}[label={\arabic*$)$}]
\item
$\P(\Omega_V)=1$;
\item
Let $\omega\in\!\Omega_V$. Then the functions $(s,t,z)\mapsto\!V(s,t,z,\omega)$ and $(r,s,t,z)\mapsto\!\V(r,s,t,z,\omega)$ are continuous. Furthermore, for any $T>0$, $\eps\in(0,1/2)$, $p>0$ there exists $K(\omega)=K(\omega,p,T,\eps)$ such that for any
$0\le s\le t\le T$, $0\le s<t_1<t_2\le T$, $z,z_1,z_2\in\R$ we have
\begin{align}
|\V(0,s,t,z_1)-\V(0,s,t,z_2)|&\le  K(\omega) |z_1-z_2|^{1/2-\eps}(|z_1|^{2\eps}\vee |z_2|^{2\eps}\vee 1)\label{lipshzet};\\
|\V(0,s,t_1,z)-\V(0,s,t_2,z)|&\le  K(\omega) |t_1-t_2| (t_1-s)^{-1}(|z|^{\eps}\vee  1)\label{lipshzet2};\\
|V(s,t,z)|&\le  K(\omega) (|z|^\eps\vee 1)\label{stbound}.
\end{align}
Moreover, $\omega\mapsto K(\omega)$ is a random variable with $\E K(\omega)^p<\infty$.
\end{enumerate}
\end{Lemma}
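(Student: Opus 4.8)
The plan is to prove the three estimates for the process $\V$ (and deduce those for $V$ as a special case via $V(s,t,z)=\V(s,t,t,z)$) by computing second moments of the relevant increments and then invoking a Kolmogorov-type continuity argument, much as in Lemma~\ref{L:VietR}. Since $\V(r,s,t,z)=\int_r^s\int_\R p_{t-t'}(z-z')\,W(dt',dz')$ is a centered Gaussian field, the It\^o isometry gives
\begin{equation*}
\E|\V(r,s,t,z)-\V(r',s',t',z')|^2=\int_0^\infty\int_\R\bigl(\I_{[r,s]}(t')p_{t-t'}(z-z')-\I_{[r',s']}(t')p_{t'-t'}(z'-z'')\bigr)^2\,dz''\,dt',
\end{equation*}
and Gaussianity upgrades the $L^2$ bound to an $L^p$ bound for every $p>0$ at the cost of a constant $C_p$. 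So the whole lemma reduces to a collection of elementary $L^2$ computations, one for each of the three increments, plus a bookkeeping step to patch the countably many ``good'' sets (over rational $T$, $\eps$, $p$) into a single $\Omega_V$.

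The key computations are as follows. For \eqref{lipshzet}: fix $s,t$ and estimate $\E|\V(0,s,t,z_1)-\V(0,s,t,z_2)|^2=\int_0^s\int_\R\bigl(p_{t-t'}(z_1-z'')-p_{t-t'}(z_2-z'')\bigr)^2\,dz''\,dt'$; using $\int_\R(p_\tau(z_1-z'')-p_\tau(z_2-z''))^2\,dz''\le C(\tau^{-1/2}\wedge\tau^{-3/2}|z_1-z_2|^2)$ and integrating in $\tau=t-t'$ over a bounded interval yields a bound of order $|z_1-z_2|$, hence after Kolmogorov a $(1/2-\eps)$-H\"older modulus in $z$; the polynomial weight $(|z_1|^{2\eps}\vee|z_2|^{2\eps}\vee1)$ comes from the global (non-compact) version of Kolmogorov's theorem exactly as in Lemma~\ref{L:VietR}. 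For \eqref{lipshzet2}: fix $s,z$ and write $\V(0,s,t_1,z)-\V(0,s,t_2,z)=\int_0^s\int_\R(p_{t_1-t'}(z-z'')-p_{t_2-t'}(z-z''))\,W(dt',dz')$; the $L^2$ norm is $\int_0^s\int_\R(p_{t_1-t'}-p_{t_2-t'})^2\,dz''\,dt'$, and using $\|p_a-p_b\|_{L^2}^2\le C|a-b|^2 a^{-5/2}$ (or a comparable heat-kernel estimate, valid for $a\le b$) and integrating $a=t_1-t'$ from $t_1-s$ to $t_1$ produces a bound of order $|t_1-t_2|^2(t_1-s)^{-2}$ (up to lower-order terms), which is why the singular factor $(t_1-s)^{-1}$ appears; Kolmogorov in the $t$-variable, after a change of variables to remove the singularity or by working on dyadic regions away from $s$, gives a Lipschitz-type modulus with that weight. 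For \eqref{stbound}: simply bound $\E|V(s,t,z)|^2=\int_s^t\int_\R p_{t-t'}(z-z'')^2\,dz''\,dt'=\int_0^{t-s}\|p_\tau\|_{L^2}^2\,d\tau=\int_0^{t-s}C\tau^{-1/2}\,d\tau\le C\sqrt{T}$ uniformly in $z$, and then control the $z$-dependence of the bound via the joint continuity and the weighted Kolmogorov estimate (the $(|z|^\eps\vee1)$ weight again coming from the non-compactness).

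The main obstacle is \eqref{lipshzet2}: handling the singularity at $t'\to s$ correctly and getting exactly the $(t_1-s)^{-1}$ rate rather than something weaker. One has to be careful that the Kolmogorov continuity argument is applied on regions $\{t_1,t_2\ge s+\eta\}$ and the estimates are then glued across $\eta\downarrow0$ with the singular weight tracked explicitly; the heat-kernel bound $\|p_a-p_b\|_{L^2}^2\le C|a-b|^2\min(a,b)^{-5/2}$ must be applied with care near the lower endpoint of integration. A secondary, purely technical nuisance is that $\V$ is a four-parameter field, so the Kolmogorov theorem must be applied in the appropriate product form (treating the ``singular'' time variable separately from the regular variables); but since we only need H\"older/Lipschitz control in one variable at a time with the others fixed, it suffices to apply the one- and two-parameter versions already available (Lemma~\ref{L:VietR} and its obvious analogues) and then use a standard diagonal/union argument over a countable dense set of the remaining parameters together with the a priori continuity of $\V$. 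The existence of a jointly continuous modification itself follows once all the moment bounds are in hand, again by Kolmogorov, since each increment has moments of all orders with the displayed H\"older exponents. Finally, all the heat-kernel $L^2$ estimates invoked above are exactly of the type collected in Lemma~\ref{L:GE:3der} and the Appendix, so no new kernel analysis is needed beyond assembling those inputs.
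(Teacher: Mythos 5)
Your reduction to Gaussian moment bounds works for \eqref{lipshzet} and \eqref{stbound} (modulo bookkeeping), but there is a genuine gap at \eqref{lipshzet2}: that estimate asserts a \emph{Lipschitz} modulus in $t$, with exponent exactly $1$, and no Kolmogorov-type continuity argument applied to increments of $\V$ can produce a H\"older exponent equal to $1$ --- from $\E|\Delta|^p\le C|t_1-t_2|^{p}(t_1-s)^{-cp}$ the Kolmogorov theorem only yields pathwise exponents strictly below $1$ (of order $1-1/p-\eps$), even after restricting to regions $\{t_1,t_2\ge s+\eta\}$ and gluing in $\eta$. So the step ``Kolmogorov in the $t$-variable \dots gives a Lipschitz-type modulus with that weight'' does not go through as stated; to get exponent $1$ by a stochastic argument you would have to work with the time-derivative field $\partial_t\V(0,s,t,z)$ (whose variance is of order $(t-s)^{-3/2}$) and integrate a pathwise bound on it, which is a different argument from the one you describe. (A smaller inaccuracy in the same computation: integrating $\|p_{t_1-t'}-p_{t_2-t'}\|_{L^2}^2\le C|t_1-t_2|^2(t_1-t')^{-5/2}$ over $t'\in[0,s]$ gives $|t_1-t_2|^2(t_1-s)^{-3/2}$, not $(t_1-s)^{-2}$; this is harmless, but it is not where the weight $(t_1-s)^{-1}$ comes from.) A secondary soft spot is the claim that a single random constant $K$ with $\E K^p<\infty$, valid simultaneously for \emph{all} $0\le s\le t\le T$, follows from one-parameter Kolmogorov estimates plus a union over a countable dense set of the remaining parameters: a supremum of countably many constants, each with bounded $p$-th moment, need not be integrable (or even finite) without an explicit weighting scheme, so this uniformity has to be built in, not appended.

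The paper avoids both difficulties by a route you did not use: it applies the (weighted, global) Kolmogorov argument only to the two-parameter field $V(0,t,z)$, obtaining \eqref{Vbound}--\eqref{tbound} with a single integrable constant, and then exploits the pathwise semigroup identity
\begin{equation*}
\V(0,s,t,z)=\int_{\R}p_{t-s}(z')\,V(0,s,z-z')\,dz',
\end{equation*}
so that \eqref{lipshzet}, \eqref{lipshzet2}, \eqref{stbound} and the joint continuity of $V$ and $\V$ become \emph{deterministic} consequences of the regularity of $V(0,\cdot,\cdot)$ together with the kernel estimates; in particular the Lipschitz-in-$t$ bound \eqref{lipshzet2} with weight $(t_1-s)^{-1}$ comes from Lemma~\ref{L:densest}, i.e.\ $\int_\R|p_{t_1-s}-p_{t_2-s}|(|z'|^\delta\vee1)\,dz'\le C|\log(t_1-s)-\log(t_2-s)|\le C|t_1-t_2|(t_1-s)^{-1}$, and the single constant $K$ is simply inherited from the one used for $V(0,\cdot,\cdot)$. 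If you want to keep your direct approach, you must replace the Kolmogorov step for \eqref{lipshzet2} by a bound on $\partial_t\V$ (or adopt the convolution identity), and you must make the uniformity in $(s,t)$ quantitative rather than by a dense-set diagonal argument.
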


Before we start the proof, let us emphasize that the result is of course not surprising:
it is well-known that the convolution of the white noise with the heat kernel is \textit{locally} H\"older $(1/2-\eps)$ continuous in space
and H\"older $(1/4-\eps)$ continuous in time (see, e.g., \cite[Exercise~6.9]{Khosh}). The lemma gives uniform \emph{global}
control on H\"older coefficients. As one can expect, the price to pay is that H\"older coefficients are no longer bounded but grow as $|z|^{\eps}$ if $z\to\infty$ (in other words,  the convolution of the white noise with the heat kernel is not a \emph{globally} H\"older function).
\begin{proof}
First, we consider the case $s=t$. In this case $\V(0,t,t,z)=V(0,t,z)$ Arguing as in the proof of \cite[Theorem~6.7]{Khosh}, we derive for any $p\ge2$, $T>0$, $t_1,t_2\in[0,T]$, $z_1,z_2\in\R$,
\begin{equation*}
\E |V(0,t_1,z_1)-V(0,t_2,z_2)|^p\le C(|t_1-t_2|^{1/4}+|z_1-z_2|^{1/2})^p.
\end{equation*}
for some $C=C(p,T)>0$. It follows from the Kolmogorov continuity criterion in the nonhomogeneous form (see, e.g., \cite[Theorem~1.4.1]{Kuni}) that for any $\eps\in(0,1-6/p)$ there exists a constant $C_1=C_1(C,p,T,\eps)>0$ such that for any $N>0$ there exist a set $\Omega_{N,p,T,\eps}\subset\Omega$ with $\P(\Omega_{N,p,T,\eps})=1$ and a random variable $K_{N,p,T,\eps}$  with $\E K_{N,p,T,\eps}^{p}\le C_1$ such that for any $\omega\in\Omega_{N,p,T,\eps}$, $t_1,t_2\in[0,T]$,  $z_1,z_2\in[-N,N]$ we have for a version of $V$ (that with some abuse of notation will be denoted further by the same letter)
\begin{equation*}
|V(0,t_1,z_1)-V(0,t_2,z_2)|\le K_{N,p,T,\eps}(\omega)N^{\frac12(6/p+\eps)}(|t_1-t_2|^{\frac14(1-6/p-\eps)}+|z_1-z_2|^{\frac12(1-6/p-\eps)}).
\end{equation*}
Put $K_{p,T,\eps}:=\sup_{N\in \N}K_{N,p,T,\eps} N^{-2/p}$. Clearly,
\begin{equation*}
\E K_{p,T,\eps}^p\le \sum_{N=1}^\infty \E K_{N,p,T,\eps}^p N^{-2}\le C_1\sum_{N=1}^\infty N^{-2}<\infty.
\end{equation*}
Hence the random variable $K_{p,T,\eps}$ is almost surely finite. Put
\begin{equation*}
\Omega_V:=\bigcap_{N,p,T,\eps} (\Omega_{N,p,T,\eps}\cap\{K_{p,T,\eps}<\infty\}),
\end{equation*}
where the intersection is taken among all $N\in\N$, and rational $p\ge2$, $\eps\in(0,1-6/p)$, $T>0$. Clearly, $\P(\Omega_V)=1$.
We see that for any $T>0$, $p\ge2$, $\eps\in(0,1-6/p)$ there exists a random variable $\wt K=\wt K_{p,T,\eps}$ such that
$\E \wt K_{p,T,\eps}^p<\infty$ and
\begin{multline}\label{Vbound}
|V(0,t_1,z_1)-V(0,t_2,z_2)|\\
\le \wt K(\omega)(|z_1|\vee|z_2| \vee 1)^{5/p+\eps/2} (|t_1-t_2|^{\frac14(1-6/p-\eps)}+|z_1-z_2|^{\frac12(1-6/p-\eps)})
\end{multline}
for all $\omega\in\Omega_V$, $t_1,t_2\in[0,T]$, $z_1,z_2\in\R$. By setting $z_1=z_2=z$, $t_1=t$, $t_2=0$, we get
\begin{equation}\label{tbound}
|V(0,t,z)|\le  \wt K(\omega) |t|^{\frac14(1-6/p-\eps)}(|z|\vee 1)^{5/p+\eps/2}.
\end{equation}

Now we consider the general case. By above,
\begin{align*}
|\V(0,s,t,z_1)-\V(0,s,t,z_2)|&=\Bigl|\int_{\R} p_{t-s}(z')(V(0,s,z_1-z')-V(0,s,z_2-z'))\,dz'\Bigr|\\
&\le C_2 \wt K(\omega) |z_1-z_2|^{\frac12(1-6/p-\eps)}(|z_1|\vee |z_2|\vee 1)^{5/p+\eps/2}.
\end{align*}
This implies \eqref{lipshzet}. Similarly, by Lemma~\ref{L:densest} and \eqref{tbound}
\begin{align*}
|\V(0,s,t_1,z)-\V(0,s,t_2,z)|&=\Bigl|\int_{\R} (p_{t_1-s}(z')-p_{t_2-s}(z')) V(0,s,z-z')\,dz'\Bigr|\\
&\le C_3 \wt K(\omega) |t_1-t_2|(t_1-s)^{-1}(|z|\vee 1)^{5/p+\eps/2}.
\end{align*}
This yields \eqref{lipshzet2}. To show continuity of $V$ we note that the function $(t,z)\mapsto V(0,t,z,\omega)$ is continuous
for $\omega\in\Omega_V$ by inequality \eqref{Vbound}. Further, for $0\le s \le t$ we have
\begin{equation}\label{whatisV}
V(s,t,z)=V(0,t,z)-\V(0,s,t,z)=V(0,t,z)+\int_{\R} p_{t-s}(z') V(0,s,z-z')\,dz'.
\end{equation}
Hence $V$ is continuous as a sum of two continuous functions. Since
\begin{equation*}
\V(r,s,t,z)=\V(r,t,t,z)-\V(s,t,t,z)=V(r,t,z)-V(s,t,z),\quad 0\le r\le s\le t,
\end{equation*}
we see that $\V$ is also continuous. Finally, bound \eqref{tbound} combined with
\eqref{whatisV} implies \eqref{stbound}.
\end{proof}

\subsection{Continuity lemmas}

As we mentioned before, in order to prove Theorem~\ref{T:exandpbp} we approximate the drift in equation \eqref{SPDEmildflow} by a sequence
of piecewise continuous functions and pass to the limit (see the proof of Lemma~\ref{L:212} below). If the function $b$ were continuous,
this would not require any additional clarifications. However in our case when we assume that $b$ is just a measurable bounded function
we need to explain why the passage to the limit is justified here.

Recall the definition of the approximation operator $\lambda_n$  given in \eqref{lambda}.

\begin{Lemma}\label{L:cont-prev}
For any $\eps>0$, $M>0$, $h>1/2$, $N\in\N$, $T>0$, $\mu>0$  there exists $\delta>0$ such that for each open set $U\subset\R$ with $|U|\le\delta$
we have with probability greater or equal than $1-\eps$
\begin{equation}\label{mest}
\int_{0}^{T} \I_U(V(t+s,z,\omega)+f_1(t,z)+\lambda_r(f_2)(t,z))\,dt\le\eps.
\end{equation}
simultaneously for all $z\in[-N,N]$, $r\in\N$, $s\in[0,T]$  and all $f_1,f_2\in \mathbf{H}_T(h,1,\mu,M)$.
\end{Lemma}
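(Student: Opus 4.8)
The plan is to reduce the claim to the key smoothing statement already available, namely Theorem~\ref{T:gladkost}, which controls the integral $\int_{t_1}^{t_2}(b(V(t+s,z)+f(t,z)+x)-b(V(t+s,z)+f(t,z)+y))\,dt$ uniformly over $f\in\mathbf H_T(h,\gamma,\mu,M)$ and over $x,y,z$. The first thing to check is that the combined drift $f_1+\lambda_r(f_2)$ falls into a class to which Theorem~\ref{T:gladkost} (or, more precisely, its piecewise-constant machinery) applies. Since $f_1\in\mathbf H_T(h,1,\mu,M)$ and $\lambda_r(f_2)$ is the value of $f_2$ at dyadic times, one gets $\|f_1+\lambda_r(f_2)\|\le 2M(|z|^\mu\vee1)$ and a time-regularity bound on $f_1+\lambda_r(f_2)$ away from the dyadic grid that is again of $\mathbf H_T$-type with the same parameters up to constants; crucially the constants are \emph{uniform in} $r$. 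This is exactly the reason the estimate \eqref{finfi} in the proof of Theorem~\ref{T:gladkost} was built to handle piecewise-constant perturbations via the telescoping sum over $f_k$.

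Next I would pass from the indicator $\I_U$ to a smooth function. Given $U$ open with $|U|\le\delta$, pick $b_\delta\in\mathbf{CL}$ (or simply a bounded $C^1$ function) with $\I_U\le b_\delta\le1$ and $\|b_\delta\|_1$ small, e.g.\ comparable to $\delta$; one can even arrange $b_\delta$ supported in a $\delta$-neighbourhood of $U$, so that $\{b_\delta>0\}$ has measure at most $C\delta$. Then
\[
\int_0^T \I_U(V(t+s,z)+f_1(t,z)+\lambda_r(f_2)(t,z))\,dt
\le \int_0^T b_\delta(V(t+s,z)+f_1(t,z)+\lambda_r(f_2)(t,z))\,dt.
\]
Write $g(t,z):=f_1(t,z)+\lambda_r(f_2)(t,z)$. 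The right-hand side equals $G(0)$ where $G(x):=\int_0^T b_\delta(V(t+s,z)+g(t,z)+x)\,dt$, and by Theorem~\ref{T:gladkost} applied to $b_\delta$ (after normalising $\|b_\delta\|_\infty$ to $1$, which rescales the final bound by $\|b_\delta\|_\infty\le1$) the map $x\mapsto G(x)$ is Lipschitz with a random Lipschitz constant $K_{b_\delta}(\omega)$ satisfying $\E K_{b_\delta}\le \|b_\delta\|_\infty C(\eps,T,M,h)$. Note that here $\|b_\delta\|_\infty=1$ after normalisation, so this does \emph{not} by itself make the constant small — smallness must come from elsewhere.

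The correct way to extract smallness is to control $G(0)$ by an \emph{average} of $G$ over a large $x$-window and use that $b_\delta$ has small $L^1$ norm. Concretely, for $R\ge1$,
\[
G(0)=\frac1{2R}\int_{-R}^{R}\bigl(G(0)-G(x)\bigr)\,dx+\frac1{2R}\int_{-R}^{R}G(x)\,dx
\le \frac1{2R}\int_{-R}^R |G(0)-G(x)|\,dx + \frac1{2R}\int_0^T\!\!\int_{\R} b_\delta(\,\cdot+x)\,dx\,dt.
\]
The second term is at most $\frac{1}{2R}\,T\,\|b_\delta\|_1\le \frac{CT\delta}{2R}$ deterministically. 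For the first term, $|G(0)-G(x)|\le K_{b_\delta}(\omega)|x|(|x|\vee|z|\vee1)^{1+\eps}(|z|^{3\mu+\eps}\vee1)\,T^{1-\eps}$ by Theorem~\ref{T:gladkost}, so on $[-R,R]$ with $|z|\le N$ it is bounded by $K_{b_\delta}(\omega)\,C(T,\eps)\,R^{2+\eps}(N^{3\mu+\eps}\vee1)$; averaging still leaves $K_{b_\delta}(\omega)C(T,\eps)R^{2+\eps}(N^{3\mu+\eps}\vee1)$. Choosing $R$ a small \emph{fixed} power would not kill this, so instead I fix $R$ of order $1$, and absorb the $K_{b_\delta}$-term probabilistically: by Markov's inequality, $\P(K_{b_\delta}\ge L)\le \|b_\delta\|_\infty C(\eps,T,M,h)/L = C(\eps,T,M,h)/L$, and since $\|b_\delta\|_\infty=1$ this alone is not small either. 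The genuine fix is to keep $b_\delta$ normalised so that $\|b_\delta\|_\infty=1$ but $\|b_\delta\|_1\le C\delta$, and then rerun Theorem~\ref{T:gladkost}'s proof observing that its output bound actually scales with $\|b_\delta\|_1^{?}$ — i.e., the main obstacle is precisely this: one needs a version of the smoothing estimate whose constant depends on $\|b_\delta\|_1$ (or on $|\{b_\delta\ne0\}|$) rather than $\|b_\delta\|_\infty$.

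The cleanest route, which I would actually follow, avoids re-proving Theorem~\ref{T:gladkost}: note $b_\delta\le \I_{U_\delta}$ where $U_\delta\supset U$ is open with $|U_\delta|\le C\delta$, and instead bound $\int_0^T\I_{U_\delta}(\cdots)\,dt$ directly using only the \emph{first-moment} estimate, i.e.\ Fubini:
\[
\E\!\!\sup_{z\in[-N,N],\,r,s,f_1,f_2}\!\int_0^T\!\I_{U_\delta}(V(t+s,z)+g(t,z))\,dt
\]
cannot be handled by Fubini alone because of the supremum, so one must first use Theorem~\ref{T:gladkost} to get \emph{continuity} of $z\mapsto\int_0^T\I_{U_\delta}(V(t+s,z)+g(t,z))\,dt$ and of the $f$-dependence (via the piecewise-constant telescoping, as in \eqref{summi}), reducing the uncountable sup over $z\in[-N,N]$, $s\in[0,T]$, $r\in\N$, $f_1,f_2$ to a \emph{countable} one over a dense grid and dyadic $r$; then for each fixed member of that countable family, $\E\int_0^T\I_{U_\delta}(V(t+s,z)+g(t,z))\,dt=\int_0^T\P(V(t+s,z)+g(t,z)\in U_\delta)\,dt\le \int_0^T \sup_{a}\P(V(t+s,z)\in U_\delta-a)\,dt$, and since $V(t+s,z)$ is Gaussian with density bounded by $C(t+s)^{-1/4}$ (variance of order $(t+s)^{1/2}$), this is $\le C|U_\delta|\int_0^T (t+s)^{-1/4}dt\le C\delta T^{3/4}$, uniformly in the grid point. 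The pointwise-in-the-grid bound is $O(\delta)$ in expectation; a union bound over the countable grid with a diagonal choice of $\delta$ small enough (depending on $\eps,M,h,N,T,\mu$, exactly as asserted), together with the modulus of continuity supplied by Theorem~\ref{T:gladkost} to fill in between grid points, then yields \eqref{mest} with probability $\ge 1-\eps$ simultaneously over all the claimed parameters. The main obstacle is the interchange of the supremum over the uncountable family $(z,s,r,f_1,f_2)$ with the probability estimate; the equicontinuity furnished by Theorem~\ref{T:gladkost} (its uniformity over $f\in\mathbf H_T$ and the telescoping control on $\lambda_r$) is exactly what makes this interchange legitimate, and that is where the bulk of the work lies.
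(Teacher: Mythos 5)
Your first route (the smooth majorant $b_\delta$ plus Theorem~\ref{T:gladkost} and averaging in $x$) is, as you yourself concede, a dead end: the smoothing constant scales with $\|b_\delta\|_\infty$, not with $\|b_\delta\|_1$, so no smallness in $\delta$ is produced there. Your second route is the right idea and is essentially the paper's strategy: a first-moment bound at grid points coming from the Gaussian density of $V(t,z)$ (the paper's \eqref{obs1}), combined with a pathwise H\"older estimate used both to interpolate between grid points and to telescope over the dyadic approximations $\lambda_l$ of the drift, with $\delta$ chosen last. But as written it has concrete gaps. First, Theorem~\ref{T:gladkost} cannot supply the equicontinuity you need: its estimate \eqref{predmainest} compares the integral at two shifts $x,y$ with the \emph{same} $z$ and the \emph{same} $s$; it gives no modulus in $z$ (nor in $s$), so it does not justify reducing the supremum over $z\in[-N,N]$ and $s\in[0,T]$ to a grid. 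The paper instead goes back to the two-parameter moment bound of Proposition~\ref{P:MB} (whose right-hand side contains the factor $|z_1-z_2|^{p\delta'/2}$) together with Lemma~\ref{L:VietR} to obtain a joint modulus in $(x,z)$ applied directly to $\I_U$, and disposes of $s$ by the change of variables $t\mapsto t+s$ combined with dyadic subintervals of $[0,2T]$; neither device appears in your sketch.

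Second, a union bound over a countably infinite grid with one fixed $\delta$ does not close: each grid member only satisfies $\P(\int>\eps)\le C\delta/\eps$ by Markov, and infinitely many such terms sum to nothing; a ``diagonal choice of $\delta$'' cannot help because a single $\delta$ must serve every member. The reduction must be to a \emph{finite} family whose cardinality is dictated by the modulus of continuity (in the paper, roughly $2^{14m}$ points, at scales $2^{-4m}$ in $x$ and $2^{-8m}$ in $z$), after which $\delta$ is chosen small relative to that cardinality; this bookkeeping, and the order of choices ($\kappa$, then $\rho,\theta$, then $m$, then $\delta$), is the quantitative heart of the lemma and is missing from your argument. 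Third, to pass from the piecewise-constant drifts back to $f_1+\lambda_r(f_2)$ you need lower semicontinuity of $\I_U$, i.e.\ the openness of $U$ via Fatou (the paper's \eqref{Fatouc}); you invoke openness only to enlarge $U$, not for this limit. Finally, note that the discretization should act on the \emph{values} of the drift (the constant shifts on each dyadic time interval), not on the functions $f_1,f_2$ themselves, since $\mathbf H_T(h,1,\mu,M)$ admits no countable dense reduction for this purpose; this is precisely what the telescoping plus the finite $x$-grid accomplishes in the paper, and it is only gestured at in your proposal.
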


The proof of the lemma is given in Section~\ref{S:PCL}.

\begin{Lemma}\label{L:cont_gen}
Let $b\in\B$. Then there exists a set $\Omega_C\subset \Omega$ with the following properties:
\begin{enumerate}[label={\arabic*$)$}]
\item
$\P(\Omega_C)=1$;
\item
Let $\omega\in\Omega_C$. Then for any $T>0$, $h>1/2$, $M>0$, $\mu>0$, $0\le t_1\le t_2\le T$, $z\in\R$,  $\theta\in\B$, function $\psi\in\mathbf{H}_T(h,1,\mu,M)$, any sequence $(s_n)_{n\in\Z_+}$, $s_n\in[0,T]$ converging to $s$ and any sequence of functions $f_n\in\mathbf{H}_T(h,1,\mu,M)$ converging pointwise on $(0,T]\times\R$ to a limit $f$ we have
\begin{align*}
\lim_{n\to\infty}\int_{t_1}^{t_2} \theta(t)b(V(t+s_n,z,&\omega)+f_n(t,z)+\lambda_n(\psi)(t,z))\,dt\nnn\\
&= \int_{t_1}^{t_2} \theta(t)b(V(t+s,z,\omega)+f(t,z)+\psi(t,z))\,dt.
\end{align*}
\end{enumerate}
\end{Lemma}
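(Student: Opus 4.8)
The strategy is to decompose the difference between the $n$-th integral and the limiting integral into three pieces, one accounting for the change of the time-shift parameter $s_n\to s$, one for the change of the drift perturbation $f_n\to f$, and one for replacing the dyadic approximation $\lambda_n(\psi)$ by $\psi$ itself; then I would show each piece tends to $0$ along a suitable subsequence argument and, since the limit is independent of the subsequence, the whole sequence converges. The key technical input is Theorem~\ref{T:gladkost}, which gives, on the good set $\Omega''$, a quantitative modulus of continuity in the spatial-shift variable $x$ for the map $x\mapsto\int_{t_1}^{t_2}b(V(t+s,z)+f(t,z)+x)\,dt$, uniformly over all $f$ in a fixed H\"older class $\mathbf H_T(h,1,\mu,M)$ and all $s\in[0,T]$. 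So I would take $\Omega_C:=\Omega''\cap\Omega_V\cap\bigcap_U\Omega(U)$, where the last intersection is over a countable family of Lebesgue-null sets $U$ (the discontinuity sets of the approximating functions $b_n$ from Lusin's theorem, as in the proof of Theorem~\ref{T:gladkost}), and $\Omega(U)$ is the set furnished by Lemma~\ref{L:prob0}; clearly $\P(\Omega_C)=1$.

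For a differentiable $b$ with bounded derivative, all three pieces are easy. The $\lambda_n(\psi)\to\psi$ piece: since $\psi\in\mathbf H_T(h,1,\mu,M)$ with $h>1/2$, the dyadic approximant $\lambda_n(\psi)(t,z)$ converges to $\psi(t,z)$ for every fixed $(t,z)$ with $t>0$ at rate controlled by $2^{-nh}t^{-1}$ times a polynomial weight in $z$; hence $|b(V(t+s,z)+f_n(t,z)+\lambda_n(\psi)(t,z))-b(V(t+s,z)+f_n(t,z)+\psi(t,z))|\le\|b'\|_\infty|\lambda_n(\psi)(t,z)-\psi(t,z)|\to0$ pointwise, and dominated convergence (the integrand is bounded by $\|b\|_\infty|\theta|_\infty$) finishes it. The $f_n\to f$ piece is handled identically using $\|b'\|_\infty|f_n(t,z)-f(t,z)|\to0$ pointwise. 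The $s_n\to s$ piece uses continuity of $(r,s,t,z)\mapsto\V(r,s,t,z)$ from Lemma~\ref{L:GE:leon}: since $V(t+s_n,z,\omega)\to V(t+s,z,\omega)$ for each fixed $t$, again $b(\cdot)$ composed with this converges pointwise and is bounded, so dominated convergence applies. Combining, the lemma holds for smooth $b$ on $\Omega_V$ alone.

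The main obstacle is passing from smooth $b$ to general bounded measurable $b$, because pointwise convergence $b_n\to b$ is \emph{not} preserved under composition with the random argument $V(t+s,z)+f(t,z)+\ldots$ unless we know this argument avoids the null set $U=\{x:\lim b_n(x)\ne b(x)\}$. This is exactly what Lemma~\ref{L:prob0} provides: on $\Omega(U)$, $\int_0^T\I_U(V(t+s,z,\omega)+g(t,z))\,dt=0$ for every $g\in\mathbf H_T(h,1,\mu,M)$, $h>1/2$. The subtlety is that $f_n+\lambda_n(\psi)$ need not lie in a \emph{fixed} class $\mathbf H_T(h,1,\mu,M)$ with $\gamma=1$: the approximant $\lambda_n(\psi)$ is only piecewise-constant, not H\"older in $t$ in the usual sense, and near $t=0$ the bound deteriorates. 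Here I would invoke Lemma~\ref{L:cont-prev}: given $\eps>0$, it produces $\delta>0$ such that for any open $U$ with $|U|\le\delta$, with probability $\ge1-\eps$ the integral $\int_0^T\I_U(V(t+s,z)+f_1(t,z)+\lambda_r(f_2)(t,z))\,dt\le\eps$ uniformly in $z\in[-N,N]$, $r$, $s$, and $f_1,f_2$ in the class. Choosing an open $U_\delta\supseteq U$ with $|U_\delta|\le\delta$ (possible since $|U|=0$), we control the ``bad'' contribution $\int_0^T\I_U(V(t+s_n,z)+f_n(t,z)+\lambda_n(\psi)(t,z))\,dt$ uniformly in $n$ by $\eps$ on a set of probability $\ge1-\eps$. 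Writing $b_n-b=(b_n-\wt b)+(\wt b-b)$ with $\wt b=\lim b_n$, and bounding $|b_n(y)-\wt b(y)|$ by $2\|b\|_\infty\I_{\text{near }U}$ plus a term going to $0$ off $U$ via dominated convergence, one gets $\limsup_n|\text{(full difference)}|\le C\|b\|_\infty\eps$ on that set; letting $\eps\downarrow0$ along a sequence and taking a union of the good sets yields the claim on $\Omega_C$ (with the localization in $z\in[-N,N]$ removed by letting $N\to\infty$, since the final estimate is for a single fixed $z$). This two-layer argument — dominated convergence off the null set, plus the uniform smallness of the null-set contribution from Lemma~\ref{L:cont-prev} — is the crux.
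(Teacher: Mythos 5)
Your overall strategy is the right one and largely parallels the paper's: approximate the measurable $b$ by regular functions, handle the regular case by pointwise convergence of the argument plus dominated convergence, and control the contribution of the exceptional set \emph{uniformly in $n$} via Lemma~\ref{L:cont-prev} (this is indeed the crux, and you correctly identified why Lemma~\ref{L:prob0} alone does not suffice, since $f_n+\lambda_n(\psi)$ does not lie in a fixed class $\mathbf H_T(h,1,\mu,M)$); the final $\eps\downarrow 0$ argument on sets of probability $\ge 1-\eps$ and the intersection over countably many parameters also match the paper. (Two cosmetic points: Theorem~\ref{T:gladkost} is not actually needed or used in your argument, and your initial definition of $\Omega_C$ omits the very sets coming from Lemma~\ref{L:cont-prev} on which your estimate lives; you repair this only implicitly at the end.)

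However, there is a genuine gap in the step where you replace $b$ by its regular approximants. You take a sequence $b_n\to b$ Lebesgue--a.e.\ with exceptional set $U$ of measure zero, and claim the bound $|b_n(y)-\wt b(y)|\le 2\|b\|_\infty\I_{\text{near }U}(y)+(\text{term}\to 0\text{ off }U)$, to be combined with dominated convergence. This does not work as stated, for two reasons. First, a.e.\ convergence gives no uniform smallness of $|b_n-\wt b|$ outside a \emph{fixed} neighbourhood $U_\delta$ of the null set: for each finite $n$ the set $\{y\notin U_\delta:\ |b_n(y)-\wt b(y)|>\eta\}$ can have positive measure, and it is not contained in any neighbourhood of $U$ chosen in advance. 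Second, the term you want to kill by dominated convergence is the diagonal composition $b_n(A_n(t))-\wt b(A_n(t))$ with $A_n(t)=V(t+s_n,z)+f_n(t,z)+\lambda_n(\psi)(t,z)$: since the argument moves with $n$, pointwise convergence $b_n\to\wt b$ cannot be evaluated along this diagonal, so the integrand need not tend to $0$ for a.e.\ $t$. The clean repair is exactly what the paper does: use Lusin's theorem in the form that produces a \emph{single} continuous $\widetilde b$ with $\|\widetilde b\|_\infty\le 2\|b\|_\infty$ and $b=\widetilde b$ outside an open set $U$ with $|U|<\delta$, so that the replacement error is bounded pointwise by $3\|b\|_\infty\I_U$, and then apply Lemma~\ref{L:cont-prev} to $\int\I_U(A_n)\,dt$ (uniformly in $n$) and to $\int\I_U(A)\,dt$, while the $\widetilde b$-integrals converge by continuity of $\widetilde b$, pointwise convergence of $A_n$ (using Lemma~\ref{L:GE:leon} for $V(t+s_n,z)\to V(t+s,z)$ and $\lambda_n(\psi)\to\psi$ for $t>0$) and dominated convergence. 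Alternatively, your two-sequence scheme can be salvaged by decoupling the indices and invoking Egorov (or convergence in measure) on the compact range of $A_n$ for fixed $z$, choosing the approximation index so that $\{|b-b_m|>\eps\}$ has measure $<\delta$; but some such uniformization is indispensable, and it is missing from the proposal.
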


\begin{proof}
The proof is based on the ideas from the proofs of \cite[Lemmas~3.3,~3.4]{Dav}. Fix $h>1/2$, $\mu>0$ and integers $N,M,T>0$. Take arbitrary $\eps>0$.
Then there exists $\delta>0$ such that statement of Lemma~\ref{L:cont-prev} is satisfied.

By Lusin's theorem there exists a continuous bounded function $\widetilde b\colon\R\to\R$ and an open set $U$ such that $|U|<\delta$, $\|\widetilde b\|_\infty\le 2 \|b\|_\infty$ and
$b(x)=\widetilde b(x)$ for all $x\notin U$. Thus, we have the bound
\begin{equation}\label{bbound}
|b(x)-\widetilde b(x)|=\I(x\in U)|b(x)-\widetilde b(x)|\le  3 \|b\|_\infty\I(x\in U).
\end{equation}

Further, by Lemma~\ref{L:cont-prev}, there exists a set $\Omega_\eps$ with $\P(\Omega_\eps)\ge1-\eps$
such that bound \eqref{mest} holds on $\Omega_\eps$. Take now any $\omega\in\Omega_\eps$, $0\le t_1\le t_2\le T$, $s,s_n\in[0,T]$, $s_n\to s$, $z\in[-N,N]$,   $\theta\in\B$, a function $\psi\in\mathbf{H}(h,1,\mu,M)$ and any sequence of functions $f_n\in\mathbf{H}(h,1,1,M)$ converging pointwise to a limit $f\in\mathbf{H}(h,1,\mu,M)$. Taking into account \eqref{bbound}, we have
\begin{align*}
\limsup_{n\to\infty}&\int_{t_1}^{t_2} \theta(t)b(V(t+s_n,z,\omega)+f_n(t,z)+\lambda_n(\psi)(t,z))\,dt\\
\le& \int_{t_1}^{t_2} \theta(t)\widetilde b(V(t+s,z,\omega)+f(t,z)+\psi(t,z))\,dt\\
&+3\|b\|_\infty\|\theta\|_\infty \limsup_{n\to\infty}\int_{t_1}^{t_2} \I_U(V(t+s_n,z,\omega)+f_n(t,z)+\lambda_n(\psi)(t,z))\,dt\\
\le& \int_{t_1}^{t_2} \theta(t)\widetilde b(V(t+s,z,\omega)+f(t,z)+\psi(t,z))\,dt
+3\|b\|_\infty\|\theta\|_\infty \eps\\
\le& \int_{t_1}^{t_2} \theta(t)b(V(t+s,z,\omega)+f(t,z)+\psi(t,z))\,dt+3\|b\|_\infty\|\theta\|_\infty\eps\\
&+3\|b\|_\infty\|\theta\|_\infty \int_{t_1}^{t_2} \I_U(V(t+s,z,\omega)+f(t,z)+\psi(t,z))\,dt\\
\le& \int_{t_1}^{t_2} \theta(t)b(V(t+s,z,\omega)+f(t,z)+\psi(t,z))\,dt+6\|b\|_\infty\|\theta\|_\infty\eps,
\end{align*}
where the second and the last inequalities follow from Lemma~\ref{L:cont-prev}.

By a similar argument, we have on $\Omega_\eps$
\begin{multline*}
\liminf_{n\to\infty}\int_{t_1}^{t_2} \theta(t)b(V(t+s_n,z,\omega)+f_n(t,z)+\lambda_n(\psi)(t,z))\,dt\\
\ge  \int_{t_1}^{t_2} \theta(t)b(V(t+s,z,\omega)+f(t,z)+\psi(t,z))\,dt-6\|b\|_\infty\|\theta\|_\infty\eps.
\end{multline*}
Since $\eps$ was arbitrary, and since $\P(\Omega_\eps)\ge1-\eps$ we see that there exists a set $\Omega(N,M,T,h,\mu)$ such that $\P(\Omega(N,M,T,h,\mu))=1$ and
\begin{multline*}
\lim_{n\to\infty}\int_{t_1}^{t_2} \theta(t)b(V(t+s_n,z,\omega)+f_n(t,z)+\lambda_n(\psi)(t,z))\,dt\\
=  \int_{t_1}^{t_2} \theta(t)b(V(t+s,z,\omega)+f(t,z)+\psi(t,z))\,dt.
\end{multline*}
for any $\omega\in\Omega(N,M,T,h,\mu)$, $0\le t_1\le t_2\le T$, $s,s_n\in[0,T]$, $s_n\to s$, $\theta\in\B$, function $\psi\in\mathbf{H}(h,1,\mu,M)$,
$z\in[-N,N]$ and any sequence of functions $f_n\in\mathbf{H}(h,1,\mu,M)$ converging pointwise to a limit $f\in\mathbf{H}(h,1,\mu,M)$.

To complete the proof of the lemma it remains to take $\Omega_C:=\cap\Omega(N,M,T,h,\mu)$ where the intersection is over all positive integers $N,M,T$ and rational $h>1/2$, $\mu>0$.
\end{proof}
\section{Proofs of Theorem~\ref{T:exandpbp} and Theorem~\ref{T:stochasticflow}(b)}\label{S:5}

Most of the section is devoted to the proof of Theorem~\ref{T:exandpbp}. Fix a bounded measurable function $b$. Without loss of generality and to ease the notation we assume in this section that $\|b\|_\infty\le1$. Now with such $b$ at hand we take for the rest of the section
\begin{equation}\label{Omegaprime}
\Omega':=\Omega_E\cap\Omega''\cap\Omega_V\cap\Omega_C\subset \Omega,
\end{equation}
where the set $\Omega_E$ is defined in the sketch of the proof of Theorem~\ref{T:exandpbp} in Section~\ref{S:MR}, $\Omega''$ is from Theorem~\ref{T:gladkost}, $\Omega_V$ is from regularity Lemma~\ref{L:GE:leon} and $\Omega_C$ is from continuity Lemma~\ref{L:cont_gen}.
Thus, on $\Omega'$ the statements of the aforementioned
theorems and lemmas are satisfied and $\P(\Omega')=1$.

We begin this section with an easy observation.

\begin{Proposition}\label{P:51}
Let $s\ge0$, $q\in\mathbf B(0+)$. Let $u_{s,q}$ be any solution to \eqref{SPDEmildflow} that starts with
initial condition $q$ at time $s$. Then $u_{s,q}(t,\cdot,\omega)\in \mathbf B(0+)$ for any $\omega\in\Omega'$, $t\ge s$.
\end{Proposition}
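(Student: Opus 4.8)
The plan is to show that starting from an initial condition $q\in\mathbf B(0+)$, the solution $u_{s,q}(t,\cdot)$ stays in $\mathbf B(0+)$ for all $t\ge s$, which amounts to establishing a growth bound of the form $|u_{s,q}(t,z)|\le C(|z|^\mu\vee 1)$ for every $\mu>0$ (with the constant depending on $\mu$). Working with the shifted solution $u^*_{s,q}$ via Remark~\ref{R:CaptObvious} (or equivalently directly with \eqref{SPDEmildflow}), I would split the mild-form representation into its three constituent pieces: the initial-condition term $\int_\R p_{t-s}(z-z')q(z')\,dz'$, the drift term $\int_s^t\int_\R p_{t-t'}(z-z')b(u(t',z'))\,dz'\,dt'$, and the noise term $V(s,t,z)$. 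The goal is to show each of the three is in $\mathbf B(0+)$ in the spatial variable $z$, uniformly in the relevant time parameters.

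First I would handle the initial term: since $q\in\mathbf B(0+)$ means $q\in\mathbf B(\eps)$ for every $\eps>0$, Lemma~\ref{L:timeq} shows that $z\mapsto\int_\R p_{t-s}(z-z')q(z')\,dz'$ belongs to $\mathbf H_T(1,1,\eps,CM)$ for every $\eps>0$; in particular it is bounded by $C(|z|^\eps\vee 1)$ uniformly for $t-s\in[0,T]$, hence lies in $\mathbf B(0+)$. Second, the drift term: because $b\in\mathbf B$ with $\|b\|_\infty\le 1$ (or at least bounded), Lemma~\ref{L:contextraterm} applied with $f(t',z')=b(u(t',z'))$ gives a bound $|h(t,z)|\le C\|b\|_\infty(|z|+T^{1-\delta})$ — actually even better, evaluating that Lipschitz estimate against the point $(t,0)$ shows the drift term is bounded by $C\|b\|_\infty(|z|+1)$, which is in $\mathbf B(1)\subset\mathbf B(0+)$? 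No — $\mathbf B(1)$ is not contained in $\mathbf B(0+)$. So here I must be more careful: since $b$ is globally bounded, $\int_s^t\int_\R p_{t-t'}(z-z')b(u(t',z'))\,dz'\,dt'$ is bounded in absolute value by $\|b\|_\infty\int_s^t\int_\R p_{t-t'}(z-z')\,dz'\,dt' = \|b\|_\infty(t-s)\le \|b\|_\infty T$, a constant, hence trivially in $\mathbf B(0)\subset\mathbf B(0+)$. Third, the noise term: by Lemma~\ref{L:GE:leon}, inequality \eqref{stbound}, on $\Omega_V\supset\Omega'$ we have $|V(s,t,z)|\le K(\omega)(|z|^\eps\vee 1)$ for every $\eps>0$, so $V(s,t,\cdot)\in\mathbf B(0+)$.

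Combining the three pieces, for any $\eps>0$ and $t\in[s,s+T]$ we obtain $|u_{s,q}(t,z)|\le C_\eps(|z|^\eps\vee 1)$ on $\Omega'$, where $C_\eps$ absorbs the Lemma~\ref{L:timeq} constant (which scales with the $\mathbf B(\eps)$-constant $M$ of $q$), the bound $\|b\|_\infty T$, and $K(\omega)$. Since $T$ was arbitrary and $\eps>0$ arbitrary, this shows $u_{s,q}(t,\cdot)\in\mathbf B(\eps)$ for all $\eps>0$, i.e.\ $u_{s,q}(t,\cdot)\in\mathbf B(0+)$, for every $t\ge s$ and every $\omega\in\Omega'$, as claimed. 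The one subtlety to watch — and the only place where anything could go wrong — is that the drift term is genuinely only bounded by a constant (not small), so one cannot hope to close a fixed-point/iteration estimate this way; but since we are only asked for membership in $\mathbf B(0+)$ of an \emph{already given} solution, not for existence, the crude global bound $\|b\|_\infty(t-s)$ suffices and no iteration is needed. Thus the proof is essentially a bookkeeping exercise assembling the three estimates, with the main (minor) obstacle being to correctly account for the weight growth: the initial and noise terms contribute $\mathbf B(0+)$ growth while the drift term contributes only a bounded constant, and their sum is still in $\mathbf B(0+)$.
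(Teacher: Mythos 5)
Your proof is correct and is essentially the same argument the paper has in mind: its proof of Proposition~\ref{P:51} simply says the claim is immediate from equation~\eqref{SPDEmildflow} and estimate~\eqref{stbound}, and your three-term decomposition (heat semigroup on $q\in\mathbf B(0+)$, drift term trivially bounded by $\|b\|_\infty(t-s)$, noise term controlled by \eqref{stbound}) is exactly the bookkeeping behind that one-line proof, including your correct self-correction that no Lipschitz estimate from Lemma~\ref{L:contextraterm} is needed for the drift.
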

\begin{proof}
This statement immediately follows from equation \eqref{SPDEmildflow} and estimate \eqref{stbound}.
\end{proof}

\subsection{Existence part of Theorem~\ref{T:exandpbp}}\label{S:exist}

In this subsection we present the proof of the existence part of Theorem~\ref{T:exandpbp}. Our main tool is the following lemma
that establishes continuity of solution to \eqref{SPDEmildflow} with respect to the initial condition. Recall that the set $\Omega'$ is defined in \eqref{Omegaprime}.

\begin{Lemma}\label{L:contincond}
Let $\omega\in\Omega'$. Let $(s_n)_{n\in\Z_+}$, $s_n\ge0$ be a sequence that converges to $s$, as $n\to\infty$. Let $(q_n)_{n\in\Z_+}$ be a sequence of measurable functions $\R\to\R$ such that $q_n(z)\to q(z)$ as $n\to\infty$ Lebesgue--almost everywhere in $z$. Assume that there exist $C>0$, $\mu>0$ such that for any  $n\in\Z_+$ one has
\begin{equation*}
|q_n(z)|\le C (|z|^\mu\vee 1),\quad z\in\R.
\end{equation*}
For each $n\in\Z_+$ let  $u_{s_n,q_n}(\cdot,\cdot,\omega)$ be a solution to \eqref{SPDEmildflow} that starts with the initial condition $q_n$ at time $s_n$.

Then there exists a solution $u_{s,q}(\cdot,\cdot,\omega)$ to \eqref{SPDEmildflow} that starts with the initial condition $q$ at time $s$. Moreover, there exists a subsequence $(n_k)_{k\in\Z_+}$ such that for any  $t>0$, $z\in\R$ we have
\begin{equation*}
u_{s_{n_k},q_{n_k}}(s_{n_k}+t,z,\omega)\to u_{s,q}(s+t,z,\omega)\quad\text{as }\,k\to\infty.
\end{equation*}
\end{Lemma}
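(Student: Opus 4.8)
The plan is to exploit the shifted formulation \eqref{shiftSPDEmildflow} and the compactness of the family $\{u^*_{s_n,q_n}\}$ in an appropriate space of continuous functions, then identify the limit as a solution by passing to the limit in the integral equation. First I would fix $\omega\in\Omega'$ and work with the shifted solutions $u^*_n(t,\cdot):=u^*_{s_n,q_n}(t,\cdot)=u_{s_n,q_n}(s_n+t,\cdot)$, which solve \eqref{shiftSPDEmildflow} with initial condition $q_n$ and forcing $V(s_n,\cdot+s_n,\cdot)$. Since $b\in\B$ with $\|b\|_\infty\le1$ and $V$ is controlled by \eqref{stbound}, the map $h_n(t,z):=\int_0^t\int_\R p_{t-t'}(z-z')b(u^*_n(t',z'))\,dz'\,dt'$ satisfies the joint Hölder/growth bounds of Lemma~\ref{L:contextraterm} uniformly in $n$; the heat-semigroup term $\int_\R p_t(z-z')q_n(z')\,dz'$ lies in $\mathbf H_T(1,1,\mu,CM)$ uniformly in $n$ by Lemma~\ref{L:timeq} (using the common bound $|q_n(z)|\le C(|z|^\mu\vee1)$); and $V(s_n,s_n+\cdot,\cdot)$ is equicontinuous on compacts with the uniform growth bound \eqref{stbound} by Lemma~\ref{L:GE:leon} together with $s_n\to s$. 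Hence the family $\{u^*_n\}$ is equicontinuous and locally uniformly bounded on $[0,T]\times\R$ for every $T$; by Arzelà–Ascoli and a diagonal argument there is a subsequence $(n_k)$ along which $u^*_{n_k}\to u^*$ locally uniformly on $(0,\infty)\times\R$, and $u^*$ inherits a bound $|u^*(t,z)|\le C'(|z|^{\mu'}\vee1)$.

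Next I would verify that $u^*$ solves \eqref{shiftSPDEmildflow} with initial condition $q$ and the forcing $V(s,\cdot+s,\cdot)$, which by Remark~\ref{R:CaptObvious} and \eqref{ushifted} gives the claimed solution $u_{s,q}$. The heat-semigroup term converges pointwise: $q_{n_k}\to q$ Lebesgue-a.e.\ with the uniform polynomial bound, so dominated convergence against $p_t(z-\cdot)$ applies. The forcing term converges by continuity of $V$ from Lemma~\ref{L:GE:leon} since $s_{n_k}\to s$. The delicate term is the drift integral $\int_0^t\int_\R p_{t-t'}(z-z')b(u^*_{n_k}(t',z'))\,dz'\,dt'$: here $b$ is only measurable, so pointwise convergence of $u^*_{n_k}\to u^*$ does not immediately give convergence of $b(u^*_{n_k})$. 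This is where I expect the main obstacle, and this is precisely what Lemma~\ref{L:cont_gen} (with the continuity Lemma~\ref{L:cont-prev} behind it) is designed to handle. To invoke it I would first show that, for each fixed $z'$, the time-slice $t'\mapsto u^*_{n_k}(t',z')$ can be compared to a function of the form $V(s_{n_k},\cdot+s_{n_k},z')$ plus a perturbation lying in a fixed class $\mathbf H_T(h,1,\mu,M)$ (coming from the heat-semigroup term and the drift term via Lemmas~\ref{L:timeq} and \ref{L:contextraterm}); then I would use that $b(u^*_{n_k}(t',z'))$ can be rewritten, after integrating the spatial variable against the kernel, in a form to which Lemma~\ref{L:cont_gen} applies, yielding convergence of the drift integral to $\int_0^t\int_\R p_{t-t'}(z-z')b(u^*(t',z'))\,dz'\,dt'$ for each fixed $(t,z)$.

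A cleaner route for that last step, which I would prefer, is to argue directly at the level of the double integral: write the drift integral as $\int_0^t\bigl(\int_\R p_{t-t'}(z-z')b(u^*_{n_k}(t',z'))\,dz'\bigr)dt'$ and note that Lemma~\ref{L:cont-prev} controls the Lebesgue measure of the (random, $\omega$-dependent) "bad time set" on which $b(u^*_{n_k}(\cdot,z'))$ differs appreciably from $\widetilde b(u^*_{n_k}(\cdot,z'))$ for a continuous $\widetilde b$ approximating $b$; for the continuous $\widetilde b$, pointwise convergence $u^*_{n_k}\to u^*$ together with the uniform bounds and dominated convergence gives convergence of the corresponding integral, and the contribution of the bad set is $O(\eps)$ uniformly. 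Letting $\eps\to0$ and using $\P(\Omega_\eps)\ge1-\eps$ — or rather, since $\omega\in\Omega'\subset\Omega_C$ is already fixed and $\Omega_C$ was built to make exactly this passage work for all parameters simultaneously — we obtain the desired convergence of the drift term for all $(t,z)$. Combining the three convergences shows $u^*$ satisfies \eqref{shiftSPDEmildflow}, hence $u_{s,q}(t,\cdot):=u^*(t-s,\cdot)$ is the required solution and $u_{s_{n_k},q_{n_k}}(s_{n_k}+t,z,\omega)=u^*_{n_k}(t,z)\to u^*(t,z)=u_{s,q}(s+t,z,\omega)$ for all $t>0$, $z\in\R$, completing the proof.
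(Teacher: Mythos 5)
Your proposal is correct and follows essentially the same route as the paper: compactness via the uniform bounds of Lemmas~\ref{L:contextraterm}, \ref{L:timeq} and \ref{L:GE:leon}, followed by identification of the drift term by writing $u^*_{s_{n_k},q_{n_k}}$ as the white-noise convolution plus a perturbation lying in a fixed class $\mathbf H_T(h,1,\mu,M)$ and invoking Lemma~\ref{L:cont_gen} (then dominated convergence in $z'$). The only cosmetic differences are that the paper applies Arzel\`a--Ascoli to the drift integrals $h_n$ rather than to $u^*_{s_n,q_n}$ itself (equicontinuity of the latter fails at $t=0$ for merely measurable $q_n$, so one should work on compacts of $(0,\infty)\times\R$), and that your alternative route via Lemma~\ref{L:cont-prev} alone would not apply to a fixed $\omega$ because it only yields a set of probability $1-\eps$ --- as you yourself note, the full-measure set $\Omega_C$ of Lemma~\ref{L:cont_gen} is the right tool, which is exactly what the paper uses.
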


This lemma implies that for any $\omega\in\Omega'$ a sequence of solutions to equation \eqref{SPDEmildflow} that start at time $s_n$ from the initial condition $q_n$ has a subsequence that
converges pointwise to a solution of equation \eqref{SPDEmildflow} that starts at time $s$ from the initial condition~$q$.

\begin{proof}
Fix $\omega\in\Omega'$, the sequences $(s_n)_{n\in\Z_+}$, $(q_n)_{n\in\Z_+}$ as in the lemma and also any $T>0$. By the definition of $u^*_{s_n,q_n}$ (recall equation \eqref{ushifted}), we have for any $t\in (0,T]$, $z\in\R$
\begin{equation*}
u^*_{s_n,q_n}(t,z)=\int_{\R}p_{t}(z-z')q_n(z')\,dz'+\int_0^t\int_{\R} p_{t-t'}(z-z')b(u^*_{s_n,q_n}(t',z'))\,dz'\,dt'+V(s_n,t+s_n,z)
\end{equation*}
and $u^*_{s_n,q_n}(0,z)=q_n(z)$. For $n\in\Z_+$ set
\begin{equation*}
h_n(t,z):=\int_0^t\int_{\R} p_{t-t'}(z-z')b(u^*_{s_n,q_n}(t',z'))\,dz'\,dt',\quad t\in [0,T],\,z\in\R.
\end{equation*}
Clearly, the sequence $(h_n)_{n\in\Z_+}$ is uniformly bounded. Indeed, $\|h_n\|_\infty\le T \|b\|_\infty$. It follows from Lemma~\ref{L:contextraterm} that for any $t_1,t_2\in[0,T]$, $z_1,z_2\in\R$
\begin{equation}\label{LipHh}
|h_n(t_1,z_1)-h_n(t_2,z_2)|\le  C \|b\|_{\infty} (|z_2-z_1|+|t_2-t_1|^{3/4})
\end{equation}
for some $C=C(T)>0$ that is independent of $n$. Hence the  Arzel\`{a}--Ascoli theorem for locally compact metric spaces (see, e.g., \cite[Theorem~4.44]{Fol}) implies that there exists a subsequence $(n_k)_{k\in\Z_+}$,
such that $h_{n_k}$ converges pointwise to some function $h$. We simplify the notation by assuming that we have already started with such a subsequence and that $n_k=k$. Hence, \eqref{LipHh} yields
\begin{equation*}
|h(t_1,z_1)-h(t_2,z_2)|\le  C \|b\|_{\infty} (|z_2-z_1|+|t_2-t_1|^{3/4}),\quad t_1,t_2\in[0,T],\,\, z_1,z_2\in\R.
\end{equation*}
Put
\begin{align}\label{ustardef}
u^*_{s,q}(t,z)&:=\int_{\R}p_{t}(z-z')q(z')\,dz'+h(t,z)+V(s,t+s,z),\quad  t\in (0,T],\,z\in\R,\\
u^*_{s,q}(0,z)&:=q(z),\quad z\in\R\nnn.
\end{align}
We claim now
that $u^*_{s,q}$ is a solution to  \eqref{shiftSPDEmildflow} on $[0,T]$. Indeed,
we observe that $u^*_{s_n,q_n}(t,z)$ can be written as follows:
\begin{equation*}
u^*_{s_n,q_n}(t,z)=V(t+s_n,z)+g_n(t,z),
\end{equation*}
where
\begin{equation}\label{gdef}
g_n(t,z):=\int_{\R}p_{t}(z-z')q_n(z')\,dz'+h_n(t,z)-\V(0,s_n,t+s_n,z),\quad  t\in [0,T],\,z\in\R.
\end{equation}
It follows from Lemma~\ref{L:timeq}, Lemma~\ref{L:GE:leon} and inequality \eqref{LipHh}
that there exists $M>0$ such that for any $n\in\Z_+$ the function $g_n\in\mathbf{H}_T(3/4,1,\mu,M)$.
By our assumptions and the dominated converging theorem, the first term at the right-hand side of \eqref{gdef}
converges pointwise to $\int_{\R}p_{t}(z-z')q(z')\,dz'$ for $(t,z)\in(0,T]\times\R$. By Lemma~\ref{L:GE:leon},
$\V(0,s_n,t+s_n,z)\to \V(0,s,t+s,z)$ as
$n\to\infty$ for  $(t,z)\in[0,T]\times\R$. This together with
$h_n$ converging pointwise to $h$ implies that
%
%
\begin{equation*}
\lim_{n\to\infty}g_n(t,z)=\int_{\R}p_{t}(z-z')q(z')\,dz'+h(t,z)-\V(0,s,t+s,z)=:g(t,z),\quad t\in (0,T],\,z\in\R.
\end{equation*}
Therefore,
\begin{align*}
h(t,z)&=\lim_{n\to\infty}h_n(t,z)\\
&=\lim_{n\to\infty} \int_{\R}\int_0^t p_{t-t'}(z-z')b(u^*_{s_n,q_n}(t',z'))\,dt'\,dz'\\
&=\lim_{n\to\infty} \int_{\R}\int_0^t p_{t-t'}(z-z')b(V(t+s_n,z)+g_n(t,z))\,dt'\,dz'
\end{align*}
Note that the function $b$ is not necessarily continuous and we cannot pass to the limit directly. Therefore to pass to the limit we employ Lemma~\ref{L:cont_gen} with the following set of parameters: $f_n\leftarrow g_n$, $f\leftarrow g$, $\psi\leftarrow0$, $\theta\leftarrow p_{t-\cdot} (z-z')$, $t_2\leftarrow t$, $t_1\leftarrow 0$. Since $g_n\in\mathbf{H}_T(3/4,1,\mu,M)$ and since for fixed $t$, $z\neq z'$ the function $p_{t-\cdot} (z-z')$ is bounded, we see that all conditions of Lemma~\ref{L:cont_gen} are satisfied. We apply the dominated convergence theorem (this is possible due to the fact that $b$ is bounded) and continue the identity above as follows:
\begin{align}\label{formulah}
h(t,z)&=\int_{\R}\int_0^t p_{t-t'}(z-z')b(V(t+s,z)+g(t,z))\,dt'\,dz'\nnn\\
&=\int_{\R}\int_0^t p_{t-t'}(z-z')b(u^*_{s,q}(t',z'))\,dt'\,dz',
\end{align}
where we also used that by \eqref{ustardef} $u^*_{s,q}(t,z)=V(t+s,z)+g(t,z)$. Obtained identity \eqref{formulah}, combined with \eqref{ustardef}, implies that $u^*_{s,q}$ is indeed a solution to \eqref{shiftSPDEmildflow}. Hence the function
$u_{s,q}(t,\cdot):= u^*_{s,q}(t+s,\cdot)$ solves equation \eqref{SPDEmildflow} that starts with the initial condition $q$ at time $s$.

We note that the convergence of $h_n$ to $h$ and continuity of $V$ imply that
\begin{equation}\label{limfixedt}
\lim_{n\to\infty }u_{s_n,q_n}(s_n+t,z)=\lim_{n\to\infty }u^*_{s_n,q_n}(t,z)=
u^*_{s,q}(t,z)=u_{s,q}(s+t,z).
\end{equation}
for any $t\in(0,T]$, $z\in\R$. Finally, by the standard diagonalization argument, we see that  there exists a subsequence
$(n_k)$ such that identity \eqref{limfixedt} is valid for any $t\in(0,\infty)$.
\end{proof}

\smallskip

\begin{proof}[Proof of existence part of Theorem~\ref{T:exandpbp}]

We recall that we have already fixed $\Xi$, a countable dense subset of $\C_0(\R)$, and $\Theta$, a countable dense subset of $\R_+$ (see the sketch of the proof of Theorem~\ref{T:exandpbp} in Section~\ref{S:MR}). Since $\Omega'\subset\Omega_E$, we see that for any $\omega\in\Omega'$, $s\in\Theta$, $q\in\Xi$ equation \eqref{SPDEmildflow} has a solution that starts with the initial condition $q$ at time $s$.

Fix any $\omega\in\Omega'$.
Let $q$ now be  an arbitrary element of $\mathbf{B}(0+)$, let $s\in\R$. Let $(q_n)_{n\in\Z_+}$ be a sequence of elements
in  $\Xi$ that converge Lebesgue--almost everywhere
to $q$ and such that for some $C>0$, $\mu>0$ one has $q_n(z)\le C (|z|\vee 1)^\mu$ uniformly over all $n$.  The existence of such a sequence is clear and can be shown by the standard argument. Let $(s_n)_{n\in\Z_+}$ be a sequence of elements in $\Theta$ that converges to $s$. By above, equation \eqref{SPDEmildflow} has a solution that starts with the initial condition $q_n$ at time $s_n$. Hence, by Lemma~\ref{L:contincond}, equation \eqref{SPDEmildflow} has a solution that starts with the initial condition $q$ at time $s$.

Since $q$ and $s$ were arbitrary elements of $\mathbf{B}(0+)$ and $\R_+$, respectively, this concludes the proof of the existence part of Theorem~\ref{T:exandpbp}.
\end{proof}

\subsection{Uniqueness part of Theorem~\ref{T:exandpbp}}\label{S:uniq}

Recall that by Remark~\ref{R:CaptObvious} it is sufficient to show that on $\Omega'$ equation \eqref{shiftSPDEmildflow} has a unique solution. This will straightforwardly imply that the original equation \eqref{SPDEmildflow}
has also a unique solution on $\Omega'$.

Till the end of this section we fix arbitrary $\omega\in\Omega'$, $s\ge0$, $q\in\mathbf B(0+)$. Without loss of generality, we assume $s\in[0,1]$. Let $v$ and $w$ be any two solutions to \eqref{shiftSPDEmildflow} with the initial condition $q$ for our fixed $\omega$, $s$.  To prove the theorem it is sufficient to show that $v(t,z)=w(t,z)$ for $z\in\R$, $t\in[0,T]$ for any $T>0$. We will verify this statement for $T=1$; the proof for other values of $T$ is exactly the same.

We observe that
\begin{equation*}
v(t,z)-w(t,z)=\int_0^t\int_{\R} p_{t-t'}(z-z')(b(v(t',z'))-b(w(t',z')))\,dz'\,dt',\quad t\in[0,1],\,z\in\R.
\end{equation*}
We denote $\psi(t,z):=v(t,z)-w(t,z)$ and rewrite the above equation in the following form:
\begin{equation*}
\psi(t,z)=\int_0^t\int_{\R} p_{t-t'}(z-z')(b(w(t',z')+\psi(t',z))-b(w(t',z')))\,dz'\,dt',\quad t\in[0,1],\,z\in\R.
\end{equation*}
It is easy to check, that for any $r\in[0,1]$ the function $\psi$ also satisfies a more general equation
\begin{align}\label{newpsi2}
\psi(t,z)&=\int_{\R} p_{t-r}(z-z')\psi(r,z')\,dz'\nonumber\\
&+\int_{\R}\int_r^t p_{t-t'}(z-z')\bigl(b(w(t',z')+\psi(t',z'))-b(w(t',z'))\bigr)\, dt'dz',\quad t\in[r,1],\,z\in\R,\\
\psi(0,z)&=\varphi(z)\nonumber.
\end{align}

Our goal is to show that the only solution to this equation with the initial condition $\varphi(z)=0$ is identically zero. This would imply uniqueness of solution to \eqref{shiftSPDEmildflow} for any $\omega\in\Omega'$, $q\in\mathbf{B}(0+)$, $s\in[0,1]$.  To show this we have to analyze this equation with a more general class of initial conditions. Namely, we assume that the function $\varphi\in\mathbf{CL}$ (recall that the class $\mathbf{CL}$ is introduced in Definition~\ref{D:BL}). Note also that the functions $\psi$, $v$, $w$ depend also on fixed $\omega$, $s$, $q$. In order not to over-crowd the notation, we write $\psi(t,z)$ for $\psi(t,z,\omega,s,q)$ and so on.

To show that equation \eqref{newpsi2} has only a trivial solution we will need to control the norm of $\psi(t,\cdot)$. We will work with a weighted H\"older norm. The use of a weighted norm is natural here since we work with functions defined on a noncompact space $\R$. Thus, for a function  $f\colon\R\to\R$ we put
\begin{equation*}
\|f\|_w:=\sup_{z\in\R} |f(z)|e^{-|z|}.
\end{equation*}
For $\delta>0$ consider a weighted Lipschitz coefficient of $f$
\begin{equation*}
Lip_\delta(f):=\sup_{z_1\neq z_2} \frac{|f(z_1)-f(z_2)|}{|z_1-z_2|\Lambda_\delta(|z_1|\vee |z_2|\vee1)}.
\end{equation*}
Recall that the function $\Lambda_\delta$ was defined in \eqref{LambdaL}.

Finally, define a weighted H\"older norm of $f$ by
\begin{equation*}
\|f\|_{1,\delta}:=\|f\|_w+Lip_\delta(f).
\end{equation*}
We have to use the additional factor $z^\delta$ in the weight of the Lipschitz coefficient because this
factor appears in the right-hand side of our main bound \eqref{predmainest} in Theorem~\ref{T:gladkost} (see also
Lemma~\ref{C:31} below).

The function $w$ can be represented as $w(t,z)=V(t+s,z)+g(t,z)$, where
\begin{equation}\label{ggg}
g(t,z):=\int_{\R}p_t(z-z')q(z')\,dz'+\int_0^t\int_{\R} p_{t-t'}(z-z')b(w(t',z'))\,dz'\,dt'-\V(0,s,t+s,z),
\end{equation}
where $t\in[0,1]$, $z\in\R$ and $\V$ was defined in \eqref{functI}. We used here the identity
\begin{equation*}
V(s,t+s,z)=\V(s,t+s,t+s,z)=\V(0,t+s,t+s,z)-\V(0,s,t+s,z).
\end{equation*}

The next two lemmas establish useful properties of the functions $g$ and $\psi$.
\begin{Lemma}\label{L:baza}
The function $g$ defined in \eqref{ggg} belongs to the class $\mathbf H(1-,1,0+)$.
\end{Lemma}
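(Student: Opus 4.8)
The plan is to decompose $g$ into its three constituent pieces and show each belongs to a suitable $\mathbf H$-class; then combine. Write $g = g_1 + g_2 - g_3$ where $g_1(t,z) := \int_\R p_t(z-z')q(z')\,dz'$, $g_2(t,z) := \int_0^t\int_\R p_{t-t'}(z-z')b(w(t',z'))\,dz'\,dt'$, and $g_3(t,z) := \V(0,s,t+s,z)$. Recall we need to show $g \in \mathbf H_T(1-,1,0+)$, i.e.\ for every $\eps>0$ there is $M$ with $g \in \mathbf H_T(1-\eps, 1, \eps, M)$: a uniform bound $|g(t,z)| \le M(|z|^\eps \vee 1)$ and a time-regularity bound $|g(t,z)-g(t',z)| \le M|t-t'|^{1-\eps} (t')^{-1}(|z|^\eps \vee 1)$ for $0<t'<t\le T$.

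For $g_1$: since $q \in \mathbf B(0+)$, in particular $q\in\mathbf B(\eps)$ for every $\eps>0$, so by Lemma~\ref{L:timeq} the function $g_1$ belongs to $\mathbf H_T(1,1,\eps,C_\eps)$; in particular it satisfies the required bounds (with time exponent $1 \ge 1-\eps$ and singularity exponent $1$, which is exactly $\gamma=1$). For $g_2$: $b$ is bounded with $\|b\|_\infty\le 1$, so $\|g_2\|_\infty \le T$, giving the uniform bound trivially, and Lemma~\ref{L:contextraterm} gives $|g_2(t_1,z)-g_2(t_2,z)| \le C\|b\|_\infty |t_1-t_2|^{1-\eps}$ for any $\eps>0$, which is even better than required (no $(t')^{-1}$ factor, no $z$-weight needed). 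Hence $g_2 \in \mathbf H_T(1-\eps,1,\eps,M)$ for a suitable $M$. For $g_3 = \V(0,s,\cdot+s,\cdot)$: apply Lemma~\ref{L:GE:leon}, which holds on $\Omega_V \supset \Omega'$. The uniform bound follows since $|g_3(t,z)| = |\V(0,s,t+s,z)| = |V(s,t+s,z) - \text{(contributions)}|$; more directly, $\V(0,s,t+s,z) = V(0,t+s,z) - V(s,t+s,z)$ is a difference of two functions each bounded by $K(\omega)(|z|^\eps\vee 1)$ via \eqref{stbound}. For the time-increment, \eqref{lipshzet2} gives $|\V(0,s,t_1+s,z)-\V(0,s,t_2+s,z)| \le K(\omega)|t_1-t_2|\,(t_1)^{-1}(|z|^\eps\vee 1) \le K(\omega)|t_1-t_2|^{1-\eps}(t_1)^{-1}(|z|^\eps\vee 1)$ on $[0,T]$ (using $|t_1-t_2|\le T$). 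So $g_3 \in \mathbf H_T(1-\eps,1,\eps,M)$ as well.

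Finally, combine: each $g_i \in \mathbf H_T(1-\eps,1,\eps,M_i)$, and the class $\mathbf H_T(h,\gamma,\mu,\cdot)$ is closed under sums (the defining inequalities add, up to using $|z|^\eps \vee 1 \le |z|^\eps \vee 1$ and that all three share the same weight $(|z|^\eps\vee 1)$ and the same singularity $(t')^{-1}$), so $g = g_1+g_2-g_3 \in \mathbf H_T(1-\eps,1,\eps, M_1+M_2+M_3)$. Since $\eps>0$ was arbitrary, $g \in \mathbf H_T(1-,1,0+)$ by Definition~\ref{D:H}. The only mild subtlety — and the one point to state carefully — is that the time-regularity estimates for $g_1$ and $g_2$ carry no $(t')^{-1}$ blow-up while that for $g_3$ does; since we only claim membership in the class with $\gamma=1$, the worst (largest) singularity dominates and the sum still lies in $\mathbf H_T(1-\eps,1,\eps,M)$. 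One should also note that $w$ is an honest solution of \eqref{shiftSPDEmildflow} so that the representation $w = V(\cdot+s,\cdot)+g$ with $g$ as in \eqref{ggg} is consistent, but this is just bookkeeping. The expected main (only) obstacle is therefore purely a matter of carefully matching the parameters $(h,\gamma,\mu)$ across the three terms and invoking the already-proven Lemmas~\ref{L:timeq}, \ref{L:contextraterm}, \ref{L:GE:leon}; there is no genuinely hard estimate to redo here.
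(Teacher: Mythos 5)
Your proposal is correct and follows essentially the same route as the paper, whose proof simply notes that the claim is an immediate corollary of Lemma~\ref{L:contextraterm}, Lemma~\ref{L:timeq} and Lemma~\ref{L:GE:leon} applied to the three terms of $g$ in \eqref{ggg}. Your write-up just makes explicit the parameter bookkeeping (absorbing $|t-s|^{\eps}$ and the missing $s^{-1}$ factors into constants, and applying Lemma~\ref{L:GE:leon} on a slightly larger time interval since the argument is $t+s$), which is exactly the intended argument.
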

\begin{proof}
The statement is an immediate corollary of Lemma~\ref{L:contextraterm}, Lemma~\ref{L:timeq} and Lemma~\ref{L:GE:leon}.
\end{proof}
\begin{Remark}\label{R:strannayaremarka}
Note that at any time $t>s$ a solution to \eqref{SPDEmildflow}, $u_{s,q}(t,\cdot)$, is much more regular than its initial condition $q\in\mathbf{B}(0+)$. Namely, $u_{s,q}(t,\cdot)$ is a H\"{o}lder function with exponent $1/2-$. If one starts with such a ``regular'' initial condition $q$ (H\"{o}lder  with exponent $1/2-$),  then it is possible to show that $g$ is more regular than it is shown in Lemma~\ref{L:baza}. Namely, $g\in \mathbf H(1-,3/4,0+)$. However we will not use this improvement of regularity of $g$ in our proof and will continue
to consider solutions to \eqref{SPDEmildflow} that start from the initial condition $q\in\mathbf{B}(0+)$.
\end{Remark}
\begin{Lemma}\label{L:30}
Assume that $\varphi\in\mathbf{CL}$. Then any solution $\psi$ to \eqref{newpsi2} is a bounded function on $[0,1]\times\R$, which is Lipschitz in space and H\"older in time with exponent $1/2$. That is, there exists a constant $C=C(\varphi)$ such that
\begin{align}
\sup_{\substack{t\in[0,1]\\z\in\R}}&|\psi(t,z)|\le C\label{boundts}\\
\sup_{\substack{t_1,t_2\in[0,1]\\z_1,z_2\in\R}}&\frac{|\psi(t_1,z_1)-\psi(t_2,z_2)|}{ |t_1-t_2|^{1/2}+|z_1-z_2|}\le C.\label{Liptimespace}
\end{align}
In particular, $\psi(t,\cdot)\in\mathbf{CL}$ for any $t\in[0,1]$.
\end{Lemma}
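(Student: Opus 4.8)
The plan is to use the representation \eqref{newpsi2} with $r=0$, which holds on all of $[0,1]\times\R$, and to decompose $\psi=h_1+h_2$, where $h_1(t,z):=\int_\R p_t(z-z')\varphi(z')\,dz'$ is the heat semigroup applied to the initial datum and $h_2(t,z):=\int_0^t\int_\R p_{t-t'}(z-z')\bigl(b(w(t',z')+\psi(t',z'))-b(w(t',z'))\bigr)\,dz'\,dt'$ is the Duhamel term. I would then estimate these two pieces separately, invoking the preparatory lemmas of Section~\ref{S:prepst}. No patching over subintervals is needed since \eqref{newpsi2} at $r=0$ is already the equation satisfied by $\psi$ on the whole interval.

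For the boundedness \eqref{boundts}: since $\varphi\in\mathbf{CL}\subset\B$ (recall Definition~\ref{D:BL}) and $\int_\R p_t(z-z')\,dz'=1$, one has $\|h_1\|_\infty\le\|\varphi\|_\infty$; and since $b$ is bounded, the integrand defining $h_2$ is pointwise bounded by $2\|b\|_\infty$, whence $|h_2(t,z)|\le 2\|b\|_\infty t\le 2\|b\|_\infty$ on $[0,1]\times\R$. Adding these gives \eqref{boundts}. For the joint Lipschitz-in-space / $\tfrac12$-H\"older-in-time bound \eqref{Liptimespace}: to $h_1$ I would apply the last part of Lemma~\ref{L:timeq} with $q\leftarrow\varphi$, which, since $\varphi\in\mathbf{CL}$, produces a constant $C_1=C_1(\varphi)$ with $|h_1(t_1,z_1)-h_1(t_2,z_2)|\le C_1(|z_1-z_2|+|t_1-t_2|^{1/2})$. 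To $h_2$ I would apply Lemma~\ref{L:contextraterm} with $f(t',z'):=b(w(t',z')+\psi(t',z'))-b(w(t',z'))$ and $\delta=1/2$; this $f$ is a bounded measurable function of $(t',z')$ (measurable because $w$ is a solution and $\psi=v-w$ is a difference of measurable functions while $b$ is Borel; bounded because $\|f\|_\infty\le 2\|b\|_\infty$), so Lemma~\ref{L:contextraterm} yields $|h_2(t_1,z_1)-h_2(t_2,z_2)|\le C\|b\|_\infty(|z_1-z_2|+|t_1-t_2|^{1/2})$. Summing the two estimates gives \eqref{Liptimespace}, and combining \eqref{boundts} with the spatial part of \eqref{Liptimespace} (which is uniform in $t$) shows $\psi(t,\cdot)\in\mathbf{CL}$ for every $t\in[0,1]$.

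The only delicate point — and the one I would expect a referee to check, though it is mild — is that the argument is not circular: one substitutes $\psi$ itself into the drift term $f$ before knowing its regularity. This causes no problem because Lemma~\ref{L:contextraterm} requires only measurability and \emph{boundedness} of $f$, and the bound $\|f\|_\infty\le 2\|b\|_\infty$ is inherited solely from $\|b\|_\infty<\infty$ and uses nothing about $\psi$; likewise the boundedness of $\psi$ in \eqref{boundts} is established independently. Hence the space-time regularity of $h_2$, and therefore of $\psi$, is a genuine consequence of the equation rather than a standing assumption, and all the required inputs (Lemma~\ref{L:timeq}, Lemma~\ref{L:contextraterm}) have already been proved.
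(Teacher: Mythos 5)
Your proposal is correct and coincides with the paper's own argument: the paper also takes $r=0$ in \eqref{newpsi2}, gets \eqref{boundts} from the boundedness of $b$ and $\varphi$, and deduces \eqref{Liptimespace} by applying Lemma~\ref{L:contextraterm} to the Duhamel term and the second part of Lemma~\ref{L:timeq} to the heat-semigroup term; you have merely spelled out the details (choice of $\delta$, the non-circularity of inserting $\psi$ into the bounded drift) that the paper leaves implicit.
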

\begin{proof}
Take in \eqref{newpsi2} $r=0$. Then bound \eqref{boundts} obviously follows from boundedness of the functions $b$ and $\varphi$.
Estimate \eqref{Liptimespace} is obtained by a straightforward application of  Lemma~\ref{L:contextraterm} and Lemma~\ref{L:timeq}.
\end{proof}

The next lemma gives ``smoothing'' properties of the operator
\begin{equation}\label{operator}
(x(\cdot),s,t,z)\mapsto \int_{\R}\int_{0}^{s} p_{t-t'}(z-z')b(V(t'+s,z')+f(t',z')+x(z'))\, dt'dz',\quad x\in\B,\, 0\le s \le t,
\end{equation}
simultaneously for all $f\in\mathbf H(1-,1,0+)$. Recall the definition of the difference of two Gaussian kernels $\Delta p_{t}$ from \eqref{DeltaP}.

\begin{Lemma}\label{C:31}
For any  $\delta\in(0,2/3)$, $N>0$, and any  function $f\in\mathbf H(1-,1,0+)$
there exists a constant $C=C(\omega,N, f,\delta)<\infty$ such
that for any  $0\le t_1\le t_2\le t\le 1$, $s\in[0,1]$, $z,z_1,z_2\in\R$, and any $x,y\in\B$ with  $\|x\|_{\infty},\|y\|_{\infty}\le N$ we have
\begin{align}\label{mestfunc1}
\int_{\R}\Bigl|&\int_{t_1}^{t_2} p_{t-t'}(z-z')\times\nnn\\
&\times\bigl(b( V(t'+s,z')+f(t',z')+x(z'))-b(V(t'+s,z')+f(t',z')+y(z'))\bigr)\, dt'\Bigr|dz'\nonumber\\
\le& C \|x-y\|_{w}|t_2-t_1|^{2/3-\delta}\Lambda_\delta(|z|\vee1);\\
\int_{\R}&\Bigl|\int_{t_1}^{t_2} \Delta p_{t-t'}(z_1-z', z_2-z')\times\nnn\\
&\times\bigl(b(V(t'+s,z')+f(t',z')+x(z'))-b(V(t'+s,z')+f(t',z')+y(z'))\bigr) dt'\Bigr|dz'\nonumber\\
\le& C \|x-y\|_w (t-t_1)^{-1/2}|t_2-t_1|^{2/3-\delta}|z_1-z_2|\Lambda_\delta (|z_1|\vee |z_2| \vee 1).\label{mestfunc2}
\end{align}
\end{Lemma}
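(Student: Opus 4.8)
The plan is to reduce both inequalities, via an integration by parts in the time variable, to the pointwise–in–$z'$ smoothing bound of Theorem~\ref{T:gladkost} combined with the weighted Gaussian–kernel estimates of Lemma~\ref{L:GE:3der}. Fix $\delta\in(0,2/3)$, $N>0$, $f\in\mathbf H(1-,1,0+)$, and assume $t_1<t_2$ (if $t_1=t_2$ both sides vanish). First I would choose $\eps'>0$ so small that
\begin{equation*}
\beta:=1-\tfrac14\Bigl(\tfrac{1}{3/4-\eps'}\vee1\Bigr)-\eps'>\tfrac23-\delta\qquad\text{and}\qquad 4\eps'\le\delta,
\end{equation*}
which is possible because the left-hand side of the first inequality tends to $2/3$ as $\eps'\downarrow0$; then pick $M=M(f,\eps')$ with $f\in\mathbf H_1(1-\eps',1,\eps',M)$, so that Theorem~\ref{T:gladkost} applies with $T=1$, $h=1-\eps'$, $\eps=\eps'$, $\gamma=1$, $\mu=\eps'$ (recall $\omega\in\Omega'\subseteq\Omega''$), yielding a finite random constant $K_b(\omega)$.

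I would then set
\begin{equation*}
g(t',z'):=b(V(t'+s,z')+f(t',z')+x(z'))-b(V(t'+s,z')+f(t',z')+y(z')),
\end{equation*}
a bounded measurable function, and introduce the \emph{backward} antiderivative $\wt G(t',z'):=\int_{t'}^{t_2}g(r,z')\,dr$ for $t'\in[t_1,t_2]$. For each fixed $z'$, Theorem~\ref{T:gladkost} applied on the interval $[t',t_2]$ (with $x(z'),y(z'),z'$ in the roles of $x,y,z$), together with $|x(z')-y(z')|\le\|x-y\|_w e^{|z'|}$, $\|x\|_\infty\vee\|y\|_\infty\le N$, and the elementary inequality $e^{|z'|}(|z'|^{4\eps'}\vee1)\le\Lambda_\delta(|z'|\vee1)$ (valid since $4\eps'\le\delta$), gives
\begin{equation*}
|\wt G(t',z')|\le C_N\,K_b(\omega)\,\|x-y\|_w\,(t_2-t')^{\beta}\,\Lambda_\delta(|z'|\vee1),\qquad C_N:=(N\vee1)^{1+\eps'}.
\end{equation*}
Since $\wt G(t_2,\cdot)\equiv0$, an integration by parts (legitimate for $t>t_2$; the case $t=t_2$ is recovered by a routine limiting argument) yields
\begin{equation}\label{E:ibpC31}
\int_{t_1}^{t_2}p_{t-t'}(z-z')\,g(t',z')\,dt'=p_{t-t_1}(z-z')\,\wt G(t_1,z')+\int_{t_1}^{t_2}\Bigl(\tfrac{\d}{\d t'}p_{t-t'}(z-z')\Bigr)\wt G(t',z')\,dt',
\end{equation}
and the analogous identity with $p_{t-t'}(z-z')$ replaced by $\Delta p_{t-t'}(z_1-z',z_2-z')$.

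To prove \eqref{mestfunc1} I would take absolute values in \eqref{E:ibpC31}, integrate in $z'$, and insert the bound on $|\wt G|$. The boundary term is at most $C_NK_b\|x-y\|_w(t_2-t_1)^{\beta}\int_\R p_{t-t_1}(z-z')\Lambda_\delta(|z'|\vee1)\,dz'$, which is $\le C\|x-y\|_w(t_2-t_1)^{2/3-\delta}\Lambda_\delta(|z|\vee1)$ by \eqref{firstestapp}, using $\beta\ge2/3-\delta$ and $t_2-t_1\le1$. For the integral term I would split $(t_2-t')^{\beta}=(t_2-t')^{\beta-(2/3-\delta)}(t_2-t')^{2/3-\delta}\le(t_2-t_1)^{\beta-(2/3-\delta)}(t_2-t')^{2/3-\delta}$ and apply \eqref{seqestapp}, again obtaining $C\|x-y\|_w(t_2-t_1)^{2/3-\delta}\Lambda_\delta(|z|\vee1)$; adding the two contributions gives \eqref{mestfunc1}. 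The proof of \eqref{mestfunc2} is word for word the same, using instead \eqref{thirdstapp} (with $t-t_1$ in place of $t$) for the boundary term and \eqref{fourthstapp} for the integral term; both of these carry the extra factor $(t-t_1)^{-1/2}|z_1-z_2|\Lambda_\delta(|z_1|\vee|z_2|\vee1)$, which is precisely the right-hand side of \eqref{mestfunc2}. The degenerate cases $t_1=0$ or $t_2=t$, where \eqref{seqestapp}--\eqref{fourthstapp} are stated with strict inequalities on the times, follow by passing to the limit, all bounds being uniform in the times.

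I expect the one genuinely delicate point to be the choice of the backward antiderivative $\wt G(t',z')=\int_{t'}^{t_2}g(r,z')\,dr$ rather than $\int_{t_1}^{t'}g(r,z')\,dr$: this makes the boundary term at $t_2$ in \eqref{E:ibpC31} vanish and, crucially, supplies the factor $(t_2-t')^{\beta}$ with $\beta>0$ that compensates the non-integrable singularity of $\d_{t'}p_{t-t'}(z-z')$ as $t'\uparrow t$ (recall $t_2\le t$) --- exactly the singularity that the weighted kernel bounds \eqref{seqestapp} and \eqref{fourthstapp} are built to absorb. With the opposite antiderivative the $t'$-integral would diverge when $t=t_2$. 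Everything else is routine bookkeeping of the weight $\Lambda_\delta$, for which Lemma~\ref{L:GE:3der} is tailor-made; in particular no regularity of $b$ beyond boundedness is used, and the only property of $f$ needed is that $f\in\mathbf H_1(1-\eps',1,\eps',M)$ for some small $\eps'$.
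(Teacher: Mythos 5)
Your proposal is correct and is essentially the paper's own argument: the paper likewise applies Theorem~\ref{T:gladkost} (with $h$ close to $1$, $\gamma=1$, small $\mu$) pointwise in $z'$ to the backward antiderivative $B(t',t_2,s,x(z'),y(z'),z')$, integrates by parts in $t'$ against $p_{t-t'}(z-z')$ resp.\ $\Delta p_{t-t'}(z_1-z',z_2-z')$ so that the boundary term at $t_2$ vanishes, and then absorbs the weights via \eqref{firstestapp}--\eqref{fourthstapp} of Lemma~\ref{L:GE:3der}. Your bookkeeping of $\eps'$, $\beta$ and the weight $\Lambda_\delta$ matches the paper's \eqref{bestt} and \eqref{genf}, so no further comment is needed.
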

\begin{Remark}
If $f\in \mathbf H(1-,3/4,0+)$ (see Remark~\ref{R:strannayaremarka} where it is explained why this is relevant), then the operator \eqref{operator} is more regular in time.
Namely the term $|t_2-t_1|$ in the right-hand side of \eqref{mestfunc1} and \eqref{mestfunc2} has the exponent $3/4-\delta$ instead of $2/3-\delta$.
\end{Remark}
\begin{proof}[Proof of Lemma~\ref{C:31}]
Fix $\delta>0$, $N>0$ and a function $f\in\mathbf{H}(1-,1,0+)$. Consider a function
\begin{equation*}
B(r_1,r_2,s,\alpha, \beta,z):=\int_{r_1}^{r_2} \bigl(b(V(t'+s,z)+f(t',z)+\alpha)-b(V(t'+s,z)+f(t',z)+\beta)\bigr)\, dt',
\end{equation*}
defined for $\alpha,\beta,z \in\R$, $0\le r_1\le r_2\le 1$, $s\in[0,1]$.

It follows from Theorem~\ref{T:gladkost} that there exists a constant $C(\omega,N,f,\delta)$ such that for any $\alpha,\beta,z \in\R$,  $|\alpha|,|\beta|\le N$, $0\le r_1\le r_2\le 1$, $s\in[0,1]$ we have
\begin{equation}\label{bestt}
|B(r_1,r_2,s,\alpha,\beta,z)|\le C(\omega,N,f,\delta)|r_2-r_1|^{2/3-\delta}|\alpha-\beta|(|z|^{\delta}\vee 1).
\end{equation}
To simplify the notation, for the rest of the proof we drop the variables $\omega$, $N$, $f$, $\delta$ and write $C$ instead of
$C(\omega,N,f,\delta)$.

Fix $0\le t_1\le t_2\le 1$. Let $(t',z')\mapsto h(t',z')$, $t'\in [t_1,t_2]$ $z'\in\R$ be a continuously differentiable function in $t'$ for $z'\in\R\setminus E$, where the Lebesgue measure of $E$ is $0$. Then for any $\alpha,\beta\in\R$, $z'\in\R\setminus E$ integration by parts gives
\begin{align*}
\int_{t_1}^{t_2} h(t',z')&\bigl(b(V(t',z')+f(t',z')+\alpha)-b(V(t',z')+f(t',z')+\beta)\bigr)\, dt'\nnn\\
&=-\int_{t_1}^{t_2} h(t',z')\, d_{t'} B(t',t_2,s,\alpha,\beta,z')\nonumber\\
&=h(t_1,z') B(t_1,t_2,s,\alpha,\beta,z')+\int_{t_1}^{t_2}  B(t',t_2,s,\alpha,\beta,z') \frac{\d h }{\d t'}(t',z')\, dt'.
\end{align*}
We integrate over $z'$ and apply estimate \eqref{bestt} to derive for any $x,y\in\B$
\begin{align}\label{genf}
\int_\R\int_{t_1}^{t_2} h(t',z')&\bigl(b(V(t',z')+f(t',z')+x(z'))-b(V(t',z')+f(t',z')+y(z'))\bigr)\, dt'\nnn\\
\le& C \|x-y\|_{w} |t_2-t_1|^{2/3-\delta} \int_\R |h(t_1,z')| \Lambda_\delta(|z'|\vee 1)dz' \nnn\\
&+C \|x-y\|_{w} \int_\R\int_{t_1}^{t_2}  \bigl| \frac{\d h }{\d t'}(t',z')\bigr| |t_2-t'|^{2/3-\delta}\Lambda_\delta(|z'|\vee 1)\, dt'dz'.
\end{align}
For any $t\ge t_2$, $z\in\R$ we can apply this formula to the function $h(t',z'):=p_{t-t'}(z-z')$ (indeed, for $z'\in\R\setminus z$ this function is  continuously differentiable  in $t'$). Using estimates \eqref{firstestapp} and \eqref{seqestapp} from Lemma~\ref{L:GE:3der}, we get \eqref{mestfunc1}. In a similar way, for $t\ge t_2$, $z_1,z_2\in\R$ we apply formula \eqref{genf} to the function  $h(t',z'):=p_{t-t'}(z_1-z')-p_{t-t'}(z_2-z')$. Using estimates \eqref{thirdstapp} and \eqref{fourthstapp} from Lemma~\ref{L:GE:3der}, we obtain \eqref{mestfunc2}.
\end{proof}

\begin{Remark}\label{R:55}
Let us explain how Lemmas \ref{L:baza}, \ref{L:30}, and \ref{C:31} will be used in the proof. Fix initial condition $\varphi\in\mathbf{CL}$. It follows from Lemma~\ref{L:30} that there exists a constant
$C_1=C_1(\varphi)$ such that inequalities \eqref{boundts} and \eqref{Liptimespace} hold. Recall again that the solution $w$ can be represented as $w(t,z)=V(t+s,z)+g(t,z)$, where $g$ was defined in \eqref{ggg}. By Lemma~\ref{L:baza}, $g\in\mathbf H(1-,1,0+)$. Thus, we can apply Lemma~\ref{C:31} with $f\leftarrow g$ and
$N\leftarrow C_{1}$. We see that there exists a constant $C_2=C_2(C_1,\varphi)$ such that the estimates \eqref{mestfunc1} and \eqref{mestfunc2} are satisfied with $C_2$ instead of $C$. We will use further the constant $C_\varphi:=\max(1,C_1,C_2)$ and we will write
$$
V_{g,s}(t,z):=V(t+s,z)+g(t,z).
$$
\end{Remark}

Now, we apply Lemma~\ref{C:31} to analyze the behavior of $\psi$ on a small interval  $[k2^{-m},(k+1)2^{-m}]$.
More precisely, for any $t\in[k2^{-m},(k+1)2^{-m}]$ we will derive bounds on $\|\psi(t,\cdot)\|_{1,\delta}$ in terms of
$\|\psi(k2^{-m},\cdot)\|_{1,\delta}$.  This lemma will be crucial for the whole argument. Namely, we will just apply the bound from Lemma~\ref{L:212} consecutively $2^m$ times to prove later the uniqueness part of Theorem~\ref{T:exandpbp}.

\begin{Lemma}\label{L:212}
For any $\delta\in(0,1/6)$ and any initial condition $\varphi\in\mathbf{CL}$  there exist constants $C=C(\delta,\varphi)$, $m_0=m_0(\delta,\varphi)$ such that for any integers $m>m_0$, $r\in[0,2^m-1]$ we have the following estimate
\begin{equation}
\label{diffonestepres}
\sup_{t\in[\frac{r}{ 2^m},\frac{r+1}{ 2^m}]}\bigl\|\psi(t,\cdot)\|_{1,\delta}\le C \|\psi(\frac{r}{ 2^m},\cdot)\|_{1,\delta}
+Ce^{-2^{m/(2\delta)}}.
\end{equation}
In particular,
\begin{equation}
\label{diffonestepresuseful}
\|\psi(\frac{r+1}{ 2^m},\cdot)\|_{1,\delta}\le C \|\psi(\frac{r}{ 2^m},\cdot)\|_{1,\delta}+Ce^{-2^{m/(2\delta)}}.
\end{equation}

\end{Lemma}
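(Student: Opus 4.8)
First I would feed the representation \eqref{newpsi2} with starting time $a:=r2^{-m}$ into the short interval $J:=[a,a+2^{-m}]$. Writing $P_\tau f(z):=\int_\R p_\tau(z-z')f(z')\,dz'$, this reads, for $t\in J$,
\[
\psi(t,z)=(P_{t-a}\psi(a,\cdot))(z)+D(t,z),\qquad
D(t,z):=\int_\R\!\int_a^t p_{t-t'}(z-z')\bigl(b(w(t',z')+\psi(t',z'))-b(w(t',z'))\bigr)\,dt'\,dz'.
\]
For the first term I would check that the heat semigroup is bounded on the weighted spaces behind $\|\cdot\|_{1,\delta}$: the $\|\cdot\|_w$-bound follows from \eqref{firstestapp} (and a direct estimate for the pure exponential weight), and for the weighted Lipschitz seminorm one writes $(P_{t-a}\psi(a,\cdot))(z_1)-(P_{t-a}\psi(a,\cdot))(z_2)=\int_\R p_{t-a}(y)\bigl(\psi(a,z_1-y)-\psi(a,z_2-y)\bigr)\,dy$ and uses \eqref{firstestapp} once more, which avoids the singular factor $t^{-1/2}$ appearing in \eqref{thirdstapp}. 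This produces the term $C\|\psi(a,\cdot)\|_{1,\delta}$ in \eqref{diffonestepres}, with $C$ depending only on $\delta$.

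The core of the argument is the estimate of $D(t,\cdot)$ in $\|\cdot\|_{1,\delta}$. Recall $w=V_{g,s}$ with $g\in\mathbf H(1-,1,0+)$ (Remark~\ref{R:55}, Lemma~\ref{L:baza}), and that, since $\varphi\in\mathbf{CL}$, the function $\psi$ is a priori globally bounded, Lipschitz in space, and H\"older-$1/2$ in time with constants depending only on $\varphi$ (Lemma~\ref{L:30}); in particular $A:=\sup_{t\in J}\|\psi(t,\cdot)\|_{1,\delta}<\infty$. Following the approach of Davie, I would decompose $\psi(\cdot,z')$ on $[a,t]$ dyadically in time, writing it as its time-constant value $\psi(a,\cdot)$ plus a telescoping sum of increments $\psi^{(k+1)}-\psi^{(k)}$ that are constant in time on the level-$(k+1)$ subintervals, and then apply the smoothing estimates \eqref{mestfunc1} and \eqref{mestfunc2} of Lemma~\ref{C:31} (with $f\leftarrow g$, $N$ the global bound on $\psi$) to each piece. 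The passage to the limit through the discontinuous $b$ is justified by Lemma~\ref{L:cont_gen} applied to the time-integral together with dominated convergence in $z'$, after rewriting the argument of $b$ in terms of functions that are H\"older of order $>1/2$ in time (namely $g$, the heat flow $P_{\cdot-a}\psi(a,\cdot)$, and $D$ itself). The base piece is bounded by $C\|\psi(a,\cdot)\|_w\,|t-a|^{2/3-\delta}\Lambda_\delta(|z|\vee1)$. For each increment, writing $\psi^{(k+1)}(t_j,\cdot)-\psi^{(k)}(t_j,\cdot)=\psi(\tau_1,\cdot)-\psi(\tau_2,\cdot)$ with $|\tau_1-\tau_2|\lesssim 2^{-(m+k)}$ and splitting $\psi(\tau_1,\cdot)-\psi(\tau_2,\cdot)=\bigl(P_{\tau_1-\tau_2}\psi(\tau_2,\cdot)-\psi(\tau_2,\cdot)\bigr)+(\text{drift integral over }[\tau_2,\tau_1])$, the first summand is $\lesssim Lip_\delta(\psi(\tau_2,\cdot))|\tau_1-\tau_2|^{1/2}$ and the second, again by Lemma~\ref{C:31}, is $\lesssim\sup_{t\in J}\|\psi(t,\cdot)\|_w|\tau_1-\tau_2|^{2/3-\delta}$; hence each increment is $\lesssim(\|\psi(a,\cdot)\|_{1,\delta}+A)\,2^{-(m+k)/2}$. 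Multiplying by the $\sim 2^k$ level-$k$ subintervals and the smoothing factor $2^{-(m+k)(2/3-\delta)}$ from \eqref{mestfunc1}, summation in $k$ leads to the geometric series $\sum_k 2^{k(\delta-1/6)}$, convergent \emph{precisely} when $\delta<1/6$ --- this is the source of the hypothesis on $\delta$ --- and bounded by $C(\|\psi(a,\cdot)\|_{1,\delta}+A)\,2^{-m(7/6-\delta)}\Lambda_\delta(|z|\vee1)$.

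Finally one must deal with the weight $\Lambda_\delta(|z|\vee1)=e^{|z|}(|z|\vee1)^\delta$ produced by Lemma~\ref{C:31}, which against $e^{-|z|}$ leaves the unbounded polynomial $(|z|\vee1)^\delta$. I would split on $\{|z|\le R_m\}$ and $\{|z|>R_m\}$ with $R_m:=2^{m/(2\delta)}$: on the first set $(|z|\vee1)^\delta\le 2^{m/2}$, so both the base and the increment contributions become $\lesssim 2^{-m(1/6-\delta)}\bigl(\|\psi(a,\cdot)\|_{1,\delta}+A\bigr)$ (using $2^{m/2}\,2^{-m(2/3-\delta)}=2^{-m(1/6-\delta)}$ and $2^{m/2}\,2^{-m(7/6-\delta)}\le 2^{-m(1/6-\delta)}$); on the second set one discards the smoothing and uses the a priori bound $\|\psi(t,\cdot)\|_\infty\le C(\varphi)$ together with $|D(t,z)|\le 2\|b\|_\infty|t-a|\le 2^{1-m}$, which give a contribution $\le Ce^{-R_m}=Ce^{-2^{m/(2\delta)}}$. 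Collecting the heat-flow and drift estimates yields
\[
\sup_{t\in J}\|\psi(t,\cdot)\|_{1,\delta}\le C\|\psi(a,\cdot)\|_{1,\delta}+C\,2^{-m(1/6-\delta)}\sup_{t\in J}\|\psi(t,\cdot)\|_{1,\delta}+C\,e^{-2^{m/(2\delta)}},
\]
and for $m>m_0=m_0(\delta,\varphi)$ chosen so that $C\,2^{-m_0(1/6-\delta)}<1/2$ the middle term is absorbed into the left-hand side, giving \eqref{diffonestepres}; \eqref{diffonestepresuseful} is the special case $t=(r+1)2^{-m}$. I expect the main obstacle to be the weighted Lipschitz seminorm of $D$: in \eqref{mestfunc2} the time-smoothing is degraded by the factor $(t-t_1)^{-1/2}$, so only $(t_2-t_1)^{2/3-\delta-1/2}=(t_2-t_1)^{1/6-\delta}$ is gained per dyadic piece and the crude telescoping diverges; recovering summability there seems to require splitting $[a,t]$ into a bulk part (away from $t$, where $(t-t_1)^{-1/2}$ is controlled and \eqref{mestfunc2} is used) and a boundary layer near $t$ (where $D(t,z_1)-D(t,z_2)$ is estimated through \eqref{mestfunc1} applied separately at $z_1$ and $z_2$), with the split point chosen carefully --- this is where the constraint $\delta<1/6$ is genuinely binding and where the bookkeeping is most delicate.
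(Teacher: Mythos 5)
Your overall architecture does match the paper's (heat flow of $\psi(a,\cdot)$ split off, dyadic piecewise-constant telescoping of the drift fed into Lemma~\ref{C:31} via Lemma~\ref{L:cont_gen}, spatial cutoff at $|z|\le 2^{m/(2\delta)}$ with an $e^{-2^{m/(2\delta)}}$ tail, absorption of a $2^{-m(1/6-\delta)}$-small term for $m\ge m_0$), but there is a genuine gap at the core of your bookkeeping. The quantity you must feed into \eqref{mestfunc1}--\eqref{mestfunc2} is the $\|\cdot\|_w$-norm of the time increments $\psi(\tau_1,\cdot)-\psi(\tau_2,\cdot)$ at scale $2^{-l}$, and your claimed bound $\lesssim(\|\psi(a,\cdot)\|_{1,\delta}+A)\,2^{-l/2}$ is not valid: the heat-flow part $\int_\R p_{\tau_1-\tau_2}(\cdot-z')\psi(\tau_2,z')\,dz'-\psi(\tau_2,\cdot)$ is controlled by $Lip_\delta(\psi(\tau_2,\cdot))$ only together with the weight $\Lambda_\delta$, so after multiplying by $e^{-|z|}$ an unbounded factor $(|z|\vee 1)^\delta$ survives; and this cannot be repaired by your final cutoff in $z$, because Lemma~\ref{C:31} takes the \emph{global} supremum $\|x-y\|_w$ over all $z'$ inside the spatial integral, not a supremum over $|z'|\le R_m$. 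Falling back on the crude bound $C_\varphi|\tau_1-\tau_2|^{1/2}$ of Lemma~\ref{L:30} does not save the scheme: it leaves an additive error of order $C_\varphi 2^{-m(2/3-\delta)}$ per step, which after the $2^m$-fold iteration with constant $C^{2^m}$ in the uniqueness argument does not vanish (the error must be super-exponentially small, like $e^{-2^{m/(2\delta)}}$, for the iteration to close). This is exactly the role of the paper's extra device that your proposal lacks: one introduces $\alpha$, the best constant in the dyadic-increment bound \eqref{mainassump22} over \emph{all} scales $n\ge m$, derives the sup bound \eqref{smartbound} and the Lipschitz bound \eqref{smartboundlip} in terms of $\alpha$, and then closes a self-consistent inequality for $\alpha$ itself, applying the cutoff $M=2^{m/(2\delta)}$ \emph{inside} that increment estimate (the $J_1,J_2,J_3$ decomposition plus the $|z|>M$ tail from Lemma~\ref{L:30}); this is where the factor $M^\delta=2^{m/2}$ and the constraint $\delta<1/6$ enter, yielding \eqref{alphabound}, and only after multiplying $\alpha$ by $2^{-m/2}$, resp.\ $2^{-m(2/3-\delta)}$, does one recover \eqref{diffonestepres}.

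Concerning the obstacle you flag for the weighted Lipschitz seminorm of the drift term: the paper needs no bulk/boundary-layer construction. It propagates $Lip_\delta$ through the recursion $Lip_n^{k+1}\le a_n Lip_n^k+b_n$ obtained from \eqref{newpsi2} restarted at $r=t_n^k$ (heat kernel carrying the Lipschitz constant as in \eqref{Lipshfirstterm}), so the drift is only ever integrated over a single dyadic interval $[t_n^k,t_n^{k+1}]$; within it the singular factor of \eqref{mestfunc2} contributes $\sum_i((k+1)2^{l-n}-i)^{-1/2}\lesssim 2^{(l-n)/2}$, and the series $\sum_l 2^{-l(1/6-\delta)}$ converges, the product $\prod a_i$ staying bounded so that iteration over binary expansions gives \eqref{smartboundlip}. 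In fact even in your direct set-up the singularity is summable once you keep track of the distance of each dyadic subinterval to the right endpoint (summing $j^{-1/2}$ rather than using the worst-case factor for every piece), so your proposed boundary layer is not where the real difficulty lies; the unresolved issue remains the increment bound above, without which neither your $w$-norm nor your Lipschitz estimate can be closed in a form that scales with $\|\psi(a,\cdot)\|_{1,\delta}$.
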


\begin{proof} Fix $\delta\in(0,1/6)$, the initial condition $\varphi$ and integer $m>0$. All the constants that will appear in the proof will depend only on $\delta$ and $\varphi$ but not on~$m$ or $r$. Since $\delta$ is fixed, we will frequently omit the subindex $\delta$ and write $\Lambda(z)$ and $Lip(f)$ instead of $\Lambda_\delta(z)$ and $Lip_\delta(f)$, correspondingly.

To simplify the notation we will show \eqref{diffonestepres} for $r=0$. The proof for other values of $r=1,2,\hdots,2^{m}-1$ is exactly the same.

Recall that we already know from Lemma~\ref{L:30} and Remark~\ref{R:55} that
\begin{equation}\label{ctaunorm}
\sup_{\substack{t_1,t_2\in[0,2^{-m}]\\z\in\R}}
 \frac{|\psi(t_1,z)-\psi(t_2,z)|}{|t_1-t_2|^{1/2}}\le C_{\varphi}.
\end{equation}
This bound is rather rough; our goal is to obtain a much finer bound that we can later iterate over $r$. We will show in he proof that if
$\|\psi(0,\cdot)\|_{1,\delta}$ is small, then the left-hand side of \eqref{ctaunorm} is also very small. This would imply \eqref{diffonestepres}.

Our proof strategy consists of three steps. First, following a standard technique (see, e.g., \cite[proof of Lemma~3.1]{Dav}),
we show that it is sufficient to estimate the supremum in the left-hand side of \eqref{ctaunorm} only for $t_1,t_2\in[0,2^{-m}]$ that are dyadic neighbors. This would imply a corresponding bound for any $t_1,t_2\in[0,2^{-m}]$. As is common in the PDE literature, to get a ``time'' bound we need to obtain first a ``space'' bound. This is done in the second step using approximation technique and estimate~\eqref{mestfunc2}. Finally, using again approximation technique and estimate~\eqref{mestfunc1} we get the required ``time'' bound \eqref{ctaunorm} with much smaller constant.

In the proof of the theorem we will be working with binomial partitions of the interval $[0,1]$.
So, for integers $n\ge0$, $k\in[0,2^n]$ put
\begin{equation}\label{binnum}
t^k_n:=k2^{-n};\quad Lip^k_n:=Lip_\delta(\psi(t^k_n,\cdot)).
\end{equation}
By Lemma~\ref{L:30}, $Lip^k_n$ are finite for any $n\ge0$, $k\in[0,2^n]$.

For the reasons explained above, we study the differences $|\psi(t_1,z)-\psi(t_2,z)|$, $t_1,t_2\in[0,2^{-m}]$ where $t_1$ and $t_2$ are dyadic neighbors. Thus, we define  $\alpha$ as the smallest number such that for any integers $n\ge m$,  $k\in[0,2^{n-m}-1]$ we have
\begin{equation}\label{mainassump22}
\|\psi(\frac{k+1}{2^{n}},\cdot)-\psi(\frac{k}{2^{n}},\cdot)\|_w\le \alpha 2^{-n/2}.
\end{equation}
Note that such an $\alpha$ exists and is finite. As mentioned before, due to Remark~\ref{R:55},  the left-hand side of \eqref{mainassump22} is bounded by $C_{\varphi} 2^{-n/2}$.

Consider a binary notation of $k2^{-n}$. We have $k2^{-n}=\sum_{i=m+1}^{n} d_i 2^{-i}$,
where each $d_i$ equals $0$ or $1$. Define approximations of $k2^{-n}$ by
\begin{equation*}
k_m:=0,\, k_j:=\sum_{i=m+1}^{j} d_i 2^{-i},\quad j\in[m+1,n].
\end{equation*}
It follows from the definition that either $k_{j}=k_{j-1}$ or $k_{j}=k_{j-1}+2^{-j}$. Therefore, we can apply estimate \eqref{mainassump22} $n-m$ times to derive
\begin{align*}\label{cormainassump22}
\|\psi(\frac{k}{2^{n}},\cdot)-\psi(0,\cdot)\|_w\le \sum_{j=m+1}^{n} \|\psi(k_{j},\cdot)-\psi(k_{j-1},\cdot)\|_w\le \alpha\sum_{j=m+1}^{n}2^{-j/2}.
\end{align*}
Hence, there exists $C>0$ such that for any $n\ge m$ and $0\le k\le 2^{n-m}$ we get
\begin{equation*}
\|\psi(\frac{k}{2^{n}},\cdot)\|_w\le \|\psi(0,\cdot)\|_w+C\alpha 2^{-m/2}.
\end{equation*}
Since the function $\psi$ is continuous we get the following bound for any $t\in[0,2^{-m}]$.
\begin{equation}\label{smartbound}
\|\psi(t,\cdot)\|_w\le \|\psi(0,\cdot)\|_w+C\alpha 2^{-m/2}
\end{equation}
Thus, we can effectively bound $\|\psi(t,\cdot)\|_w$ for any $t\in[0,2^{-m}]$. Note that the constant $C$ does not depend on $\alpha$.

To approximate the solutions to \eqref{newpsi2} we consider piecewise approximations defined above in \eqref{lambda}. Namely, we introduce
a sequence of piecewise--constant (in time) functions
\begin{equation*}
\psi_n(\cdot,z):=\lambda_n(\psi(\cdot,z)),\quad z\in\R,
\end{equation*}
where $n\ge m$. We see that $\psi_n(t,z)$ is equal to $\psi((k+1)2^{-n},z)$ for $t\in(k2^{-n},(k+1)2^{-n}]$. In particular, the function
$\psi_m(t,z)$ is constant in $t$ on the interval $(0,2^{-m}]$.

We start with an estimation of the weighted Lipschitz coefficient (with respect to the space variable) of the function $\psi$. We want to do it in all binary points of our initial interval $[0,2^{-m}]$, i.e. in all points of the form $t_n^k=k2^{-n}$. Here $n\ge m$, $0\le k\le 2^{n-m}-1$. We derive from \eqref{newpsi2}
\begin{align}\label{Lipshfirstappr2}
&\psi(t_n^{k+1},z_1)-\psi(t_n^{k+1},z_2)=\int_{\R} p_{2^{-n}}(z')(\psi(t_n^k,z_1-z')-\psi(t_n^k,z_2-z'))\,dz'\nonumber\\
&+\int_{\R}\int_{t_n^k}^{t_n^{k+1}} \Delta p_{t_n^{k+1}-t}(z_1-z',z_2-z')\bigl(b(V_{g,s}(t,z')+\psi(t,z'))-b(V_{g,s}(t,z'))\bigr)\, dtdz'\nonumber\\
&=:I_1+I_2.
\end{align}

First, let us  bound $I_1$. By definition of $Lip_n^k$ (see \eqref{binnum}), we have
\begin{align}\label{Lipshfirstterm}
|I_1|&\le
Lip_n^k |z_1-z_2|\int_{\R} p_{2^{-n}}(z')\Lambda(|z_1-z'|\vee |z_2-z'|\vee1)\,dz'\nnn\\
&\le Lip_n^k |z_1-z_2|\Lambda(|z_1|\vee |z_2|\vee1)\int_{\R} p_{2^{-n}}(z')e^{|z'|}(|z'|^\delta+1)\,dz'
\nonumber\\
&\le  Lip_n^k |z_1-z_2|\Lambda(|z_1|\vee |z_2|\vee1)e^{2^{-n}}(1+C 2^{-n\delta/2}),
\end{align}
where in the second inequality we also used the fact that the function $\Lambda$ is increasing  and
$|z_1-z'|\vee |z_2-z'|\vee1\le (|z_1|\vee |z_2|\vee1)+|z'|$.

To handle $I_2$ we apply continuity Lemma~\ref{L:cont_gen} with the following set of parameters: $f_n\leftarrow g$, $f\leftarrow g$, $\psi\leftarrow \psi$, $\theta\leftarrow \Delta p_{t_n^{k+1}-\cdot}(z_1-z',z_2-z') $, $s_n\leftarrow s$, $t_2\leftarrow t_n^{k+1}$, $t_1\leftarrow t_n^{k}$. Since $g\in\mathbf H(1-,1,0+)$, and since for fixed $z'\neq z_1,z_2$ the function $\Delta p_{t_n^{k+1}-\cdot}(z_1-z',z_2-z')$ is bounded we see that all conditions of the lemma are satisfied. Since the function $b$ is also bounded, we can apply dominated convergence theorem to obtain
\begin{align}\label{I2split}
I_2=&\lim_{l\to\infty} \int_{\R}\int_{t_n^k}^{t_n^{k+1}} \Delta p_{t_n^{k+1}-t}(z_1-z',z_2-z')\bigl(b(V_{g,s}(t,z')+\psi_l(t,z'))-b(V_{g,s}(t,z'))\bigr)\, dtdz'\nonumber\\
=&\int_{\R}\int_{t_n^k}^{t_n^{k+1}} \Delta p_{t_n^{k+1}-t}(z_1-z',z_2-z')\bigl(b(V_{g,s}(t,z')+\psi_n(t,z'))-b(V_{g,s}(t,z'))\bigr)\, dtdz'\nonumber\\
&+\sum_{l=n}^\infty \int_{\R}\int_{t_n^k}^{t_n^{k+1}} \Delta p_{t_n^{k+1}-t}(z_1-z',z_2-z') \nnn\times\\
&\hphantom{+\sum_{l=n}^\infty \int_{\R}\int_{t_n^k}^{t_n^{k+1}}}\times\bigl(b(V_{g,s}(t,z')+\psi_{l+1}(t,z'))-b(V_{g,s}(t,z')+\psi_{l}(t,z'))\bigr)\, dtdz'\nnn\\
=&:I_{21}+\sum_{l=n}^\infty I_{22}(l).
\end{align}
Thus, continuity Lemma~\ref{L:cont_gen} allowed us to pass from continuous function $\psi$ to its piecewise-constant approximations $\psi_l$. This is crucial due to the fact that our main tool, Lemma~\ref{C:31}, works only for constant in $t$ functions $x$ and $y$.

Estimation of $I_{21}$ is straightforward. It follows from the definition of approximation $\psi_n$, that
$\psi_n(t,z)=\psi_n(t^{k+1}_n,z)$ for any $t\in(t_n^k,t_n^{k+1}]$, $z\in\R$. Taking into account Remark~\ref{R:55}, we make use of the bounds in \eqref{mestfunc2} and \eqref{smartbound} to obtain
\begin{align}\label{Lipshsecterm}
|I_{21}|&\le C_\varphi |z_1-z_2| 2^{-(1/6-\delta) n}\|\psi(t_n^{k+1},\cdot)\|_w\Lambda(|z_1|\vee |z_2|\vee1)\nnn\\
&\le C  C_\varphi|z_1-z_2| 2^{-(1/6-\delta) n}\Lambda(|z_1|\vee |z_2|\vee1)(\|\psi(0,\cdot)\|_w+\alpha 2^{-m/2}).
\end{align}

Now let's do the most tricky part and work with term $I_{22}(l)$ in \eqref{Lipshfirstappr2}. As mentioned above, the functions  $\psi_{l}$ and
$\psi_{l+1}$ are piecewise-constant. Thus, we split the integral over $(t_n^k, t_n^{k+1}]$ into $2^{l-n}$ integrals over smaller subintervals
$(t_l^i, t_l^{i+1}]$, $k2^{l-n}\le i < (k+1)2^{l-n}$. On each such subinterval function $\psi_{l}$ is constant in $t$ and is equal to $\psi(t^{i+1}_l,z)$. The function $\psi_{l+1}$ also equal to the same value $\psi(t^{i+1}_l,z)$ for $t\in (t^{i+1/2}_l, t^{i+1}_l]$ and to
$\psi(t^{i+1/2}_l,z)$ for $t\in (t^{i}_l, t^{i+1/2}_l]$. Thus, we get
\begin{align}\label{razbivka}
I_{22}(l)= \sum_{i=k2^{l-n}}^{(k+1)2^{l-n}-1} &\int_{\R}\int_{t^i_l}^{t^{i+1}_l} \Delta p_{t_n^{k+1}-t}(z_1-z',z_2-z')\times\nnn\\ \times&\bigl(b(V_{g,s}(t,z')+\psi_{l+1}(t,z'))-b(V_{g,s}(t,z')+\psi_{l}(t,z'))\bigr)\, dtdz'\nnn\\
=\sum_{i=k2^{l-n}}^{(k+1)2^{l-n}-1} &\int_{\R}\int_{t^i_l}^{t^{i+1/2}_l} \Delta p_{t_n^{k+1}-t}(z_1-z',z_2-z')\times\nnn\\ \times&\bigl(b(V_{g,s}(t,z')+\psi(t^{i+1/2}_l,z'))-b(V_{g,s}(t,z')+\psi(t^{i+1}_l,z))\bigr)\, dtdz'.
\end{align}
We apply our main estimate \eqref{mestfunc2} and assumption \eqref{mainassump22} to this identity. We derive
\begin{align*}
|I_{22}&(l)|\\
\!\le&  C_\varphi |z_1\!-\!z_2| \Lambda(|z_1|\!\vee\! |z_2|\!\vee\!1) 2^{-l(2/3-\delta)} \sum_{i=k2^{l-n}}^{(k+1)2^{l-n}-1}(t_n^{k+1}-t^i_l)^{-1/2} \|\psi(t^{i+1}_l,\cdot)-\psi(t^{i+1/2}_l,\cdot) \|_w
\nonumber\\
\le& C_{\varphi}|z_1-z_2|\Lambda(|z_1|\vee |z_2|\vee1) \alpha   2^{-l(2/3-\delta)}\sum_{i=k2^{l-n}}^{(k+1)2^{l-n}-1} ((k+1)2^{l-n}-i)^{-1/2}
\nonumber\\
\le& C C_\varphi|z_1-z_2|\Lambda(|z_1|\vee |z_2|\vee1)\alpha  2^{-l(1/6-\delta)}2^{-n/2}
\end{align*}
and hence
\begin{equation}\label{I22est}
\sum_{l=n}^\infty I_{22}(l)\le C C_\varphi \alpha |z_1-z_2|\Lambda(|z_1|\vee |z_2|\vee1) 2^{-n(2/3-\delta)}.
\end{equation}
Combining \eqref{Lipshfirstappr2}, \eqref{Lipshfirstterm}, \eqref{I2split}, \eqref{Lipshsecterm}, and \eqref{I22est}, we finally obtain
\begin{align*}
Lip_n^{k+1}&\le e^{2^{-n}}(1+C 2^{-n\delta/2})Lip_n^k+ C C_\varphi 2^{-n (1/6-\delta)}(\|\psi(0,\cdot)\|_w+\alpha 2^{-m/2})+C C_\varphi\alpha 2^{-n(2/3-\delta)}\\
&=:a_n Lip_n^k+ b_n.
\end{align*}
We consider again binary approximations to $t^k_n$ and employ again the same argument as we used in the proof of \eqref{smartbound}. We get
\begin{equation*}
Lip_n^{k+1}\le \left(\prod_{i=m+1}^{n} a_i \right) \Bigl(Lip^0_m + \sum_{i=m+1}^{n} b_i\Bigr)
\end{equation*}
Since
\begin{equation*}
\prod_{i=m+1}^{n} a_i =\prod_{i=m+1}^{n}  e^{2^{-i}}(1+C2^{-i\delta/2})\le\exp(\sum_{i=m+1}^{n}  (2^{-i}+C 2^{-i\delta/2}))
\end{equation*}
is bounded uniformly in $n$, we derive
\begin{equation*}
Lip_n^{k}\le C  C_\varphi \|\psi(0,\cdot)\|_{1,\delta}+C\alpha 2^{-m(2/3-\delta)},\quad n\ge m,\,\,k\in[0,2^{n-m}-1].
\end{equation*}
Using again continuity of the function $\psi$ and the fact that the constants $C$, $C_\varphi$ in the bound above do not
depend on $k$, $n$, we arrive to the bound
\begin{equation}\label{smartboundlip}
Lip(\psi(t,\cdot))\le C  C_\varphi \|\psi(0,\cdot)\|_{1,\delta}+C\alpha 2^{-m(2/3-\delta)},\quad t\in[0,2^{-m}].
\end{equation}

\bigskip
Now we return to the main line of the proof. Recall that our aim is to estimate left-hand side of  \eqref{mainassump22} and bound $\alpha$. We treat  large $|z|$ and small $|z|$ differently. Therefore, we fix  a large threshold $M>1$.
The precise value of $M$ will depend on $m$ and will be chosen later.

If $|z|$ is large enough, we use very rough estimates from Lemma~\ref{L:30}:
\begin{equation}\label{bigR}
\sup_{|z|\ge M}e^{-|z|}|(\psi(t^{k+1}_n,z)-\psi(t^k_n,z))|\le C_{\varphi} e^{-M}2^{-n/2}.
\end{equation}

For small $z$ we estimate the same quantity more precisely. Arguing similarly to \eqref{I2split} and using \eqref{newpsi2} and continuity Lemma~\ref{L:cont_gen}, we obtain for any $z\in\R$, and integers $n\ge m$, $k\in[0,2^{n-p}-1]$
\begin{align}\label{firstappr22}
\psi&(t^{k+1}_n,z)-\psi(t^k_n,z)=\int_{\R} p_{2^{-n}}(z')(\psi(t^k_n,z-z')-\psi(t^k_n,z))\,dz'\nonumber\\
&+\int_{\R}\int_{t^k_n}^{t^{k+1}_n} p_{t^{k+1}_n-t}(z-z')\bigl(b(V_{g,s}(t,z')+\psi_n(t,z'))-b(V_{g,s}(t,z'))\bigr)\, dtdz'\nonumber\\
&+\sum_{l=n}^\infty \int_{\R}\int_{t^k_n}^{t^{k+1}_n} p_{t^{k+1}_n-t}(z-z') \bigl(b(V_{g,s}(t,z)+\psi_{l+1}(t,z))-b(V_{g,s}(t,z)+\psi_{l}(t,z))\bigr)\, dtdz'\nnn\\
=:&J_1(k,z)+J_2(k,z)+J_3(k,z)
\end{align}

By the definition of $Lip^k_n$ and \eqref{smartboundlip} we have
\begin{equation}\label{firstbs}
\sup_{|z|\le M} e^{-|z|}|J_1(k,z)|\le  C Lip^k_n M^\delta 2^{-n/2}\le C C_\varphi M^{\delta}2^{-n/2}( \|\psi(0,\cdot)\|_{1,\delta}+\alpha 2^{-m(2/3-\delta)}).
\end{equation}

To bound $e^{-|z|}J_2(k,z)$ we apply estimate \eqref{mestfunc1}. We get
\begin{align*}
\sup_{|z|\le M}& e^{-|z|}|J_2(k,z)|\\
=&\sup_{|z|\le M}e^{-|z|}\Bigl|\int_{\R}\int_{t^k_n}^{t^{k+1}_n} p_{t^{k+1}_n-t}(z-z')\bigl(b(V_{g,s}(t,z')+\psi(t^{k+1}_n,z'))-b(V_{g,s}(t,z'))\bigr)\, dtdz'\Bigr|\nonumber\\
\le& C_\varphi M^\delta \|\psi(t^{k+1}_n,\cdot)\|_w 2^{- n(2/3-\delta)}.
\end{align*}
Hence, by \eqref{smartbound} we have
\begin{align}\label{secondbs}
\sup_{|z|\le M} e^{-|z|}|J_2(k,z)|&\le
C C_\varphi  2^{-n(2/3-\delta)}M^\delta (\|\psi(0,\cdot)\|_w+\alpha 2^{-m/2}).
\end{align}

Finally, let's work with $J_3$. We estimate it using \eqref{mestfunc1} and \eqref{mainassump22}. We derive (similarly to \eqref{razbivka})
\begin{align}\label{diffDaviel22}
\sup_{|z|\le M} e^{-|z|}|J_3(k,z)|&\le C_\varphi  M^\delta\sum_{l=n}^\infty \sum_{i=k2^{l-n}}^{(k+1)2^{l-n}-1}  2^{-l(2/3-\delta)} \|\psi(t^{i+1}_l,\cdot)-\psi(t^{i+1/2}_l,\cdot) \|_w\nonumber\\
&\le C_\varphi  M^\delta\alpha\sum_{l=n}^\infty  2^{-n}2^{-l(1/6-\delta)}\nnn\\
&\le C C_\varphi  M^\delta\alpha 2^{-n(7/6-\delta)}.
\end{align}

Thus, it remains just to combine the obtained estimates of terms in the right-hand side of \eqref{firstappr22} (namely, we combine \eqref{bigR}, \eqref{firstbs}, \eqref{secondbs}, and \eqref{diffDaviel22}) to obtain for any integers $n\ge m$, $k\in[0,2^{n-m}-1]$
\begin{equation*}
\|\psi(t^{k+1}_n,\cdot)-\psi(t^k_n,\cdot)\|_w\le C C_\varphi 2^{-n/2}  M^\delta(\|\psi(0,\cdot)\|_{1,\delta}+\alpha2^{-m(2/3-\delta)})+C_\varphi  e^{-M}2^{-n/2}.
\end{equation*}

Comparing this with \eqref{mainassump22} and using the definition of $\alpha$, we get
\begin{equation*}
\alpha\le C C_\varphi  M^\delta(\|\psi(0,\cdot)\|_{1,\delta}+\alpha2^{-m(2/3-\delta)})+C_\varphi  e^{-M}.
\end{equation*}
Now we pick $M=2^{m/(2\delta)}$ and rewrite the obtained bound
\begin{equation*}
\alpha\le C C_\varphi2^{- m(1/6-\delta)}\alpha +   C C_\varphi 2^{m/2}\|\psi(0,\cdot)\|_{1,\delta}+C_\varphi e^{-2^{m/(2\delta)}}.
\end{equation*}
Recall that the constants $C$, $C_\varphi$ do not depend on $m$. Thus, if we  choose now $m_0=m_0(\delta,\varphi)$ large enough such that  $C C_\varphi2^{- m_0(1/6-\delta)}<1/2$, then for any $m\ge m_0$ we finally deduce
\begin{equation}\label{alphabound}
\alpha\le C C_\varphi 2^{m/2}\|\psi(0,\cdot)\|_{1,\delta}+C C_\varphi e^{-2^{m/(2\delta)}}.
\end{equation}
Let us stress once again, that the constants that appear in the right-hand side of the above equation ($C$, $C_\varphi$, $m_0$) depend only on $\delta$ and $\varphi$, but not on $r$ ($r$ was assumed to be equal to $0$ in the beginning of the proof). The proof for other values of $r$
is exactly the same with exactly the same final constants $C$, $C_\varphi$, $m_0$.

To complete the proof it remains just to substitute the obtained estimate of $\alpha$ \eqref{alphabound} into estimates \eqref{smartbound} and \eqref{smartboundlip}. Thus, we get the following final bound on the H\"older norm of $\psi(t,\cdot)$
\begin{equation*}
\|\psi(t,\cdot)\|_{1,\delta}=\|\psi(t,\cdot)\|_w+Lip_\delta(\psi(t,\cdot))\le
C C_\varphi \|\psi(0,\cdot)\|_{1,\delta}+C C_\varphi e^{-2^{m/(2\delta)}},\quad t\in[0,2^{-m}].
\end{equation*}
This proves \eqref{diffonestepres}. Estimate \eqref{diffonestepresuseful} is an immediate corollary
of  \eqref{diffonestepres}.
\end{proof}

Now we can straightforwardly estimate the behavior of $\psi$ on bigger intervals and give a proof of
uniqueness part of Theorem~\ref{T:exandpbp}.
\begin{proof}[Proof of uniqueness part of Theorem~\ref{T:exandpbp}]
We use Lemma~\ref{L:212} with $\delta=1/10$ and zero initial condition $\varphi=0$. For large enough
$m\ge m_0$ we apply bound \eqref{diffonestepresuseful} consequently  $2^m$ times. We get that there exists a constant $C>0$ such that for any integer
$k\in[0,2^{-m}]$
\begin{equation*}
\|\psi(k2^{-m},\cdot)\|_{1,\delta}\le C^{2^m}\exp(-2^{m/(2\delta)}).
\end{equation*}
Note that the constant $C$ does not depend on $m$ or $k$. Therefore, by  letting $m\to\infty$ we get
$\|\psi(t,\cdot)\|_{1,\delta}=0$ for any dyadic $t\in[0,1]$. By continuity of $\psi$, we see that
$\|\psi(t,\cdot)\|_{1,\delta}=0$ for any $t\in[0,1]$, which implies that $\psi$ is identically $0$ on $[0,1]\times\R$.

Thus, equation \eqref{newpsi2} has only trivial solution and therefore on $\Omega'$ equation \eqref{SPDEmildflow}
has a unique solution. This solution is in $\mathbf B(0+)$ by Proposition~\ref{P:51}.

Finally, let us prove the last part of the theorem. Let $q_1,q_2\in\mathbf B(0+)$ be two initial conditions and $q_1(z)=q_2(z)$ Lebesgue--almost everywhere. Let $u_{s,q_1}$ and $u_{s,q_2}$ be the solutions to \eqref{SPDEmildflow} that start
with initial conditions $q_1$ and $q_2$ correspondingly. Note that for $t\in(0,1]$, $z\in\R$  we have
\begin{equation}\label{difftwo}
 \int_\R p_t (z-z')q_1(z')\,dz'=\int_\R p_t (z-z')q_2(z')\,dz'.
\end{equation}
Consider now the function $v(t,z):=u_{s,q_2}(t,z,\omega)$ for $t\in(0,1]$, $z\in\R$, and $v(0,z):=q_1(z)$, $z\in\R$. Identity \eqref{difftwo}
implies that $v$ is another solution to \eqref{SPDEmildflow} that start
with initial condition $q_1$ at time $s$. By uniqueness, $v=u_{s,q_1}$. Thus,
$u_{s,q_1}(t,z)=u_{s,q_2}(t,z)$ for $t\in(0,1]$, $z\in\R$.
\end{proof}

\subsection{Proof of Theorem~\ref{T:stochasticflow}(b): continuity of the flow}\label{S:cont}
\begin{proof}[Proof of Theorem~\ref{T:stochasticflow}(b)]
Fix $\omega\in\Omega'$, $s\ge0$, $t> s$, $z\in\R$ and the initial conditions $(q_n)_{n\in\Z_+}$ satisfying the assumptions of the theorem. It follows from the definition of
$\phi$ in Part (a) of the theorem that  $\phi(s,\cdot,q_n,\omega)$ is a solution to \eqref{SPDEmildflow} that starts at time $s$ with the condition $q_n$. Since all the assumptions of Lemma~\ref{L:contincond} are met, we see that there exist a subsequence $(n_k)_{k\in\Z_+}$
such that
\begin{equation*}
\lim_{k\to\infty}\phi(s,t,q_{n_k},\omega)(z)= \widetilde u (t,z,\omega)
\end{equation*}
for some  $\widetilde u$ and $\widetilde u (\cdot,\cdot,\omega)$  solves  \eqref{SPDEmildflow} that starts at time $s$ with the initial condition $q$. On the other hand, $\phi(s,\cdot,q,\omega)$ is also a solution to
\eqref{SPDEmildflow} that starts at time $s$ with the initial condition $q$. Therefore, by Theorem~\ref{T:exandpbp}, these solutions coincide and $\widetilde u (t,z,\omega)=\phi(s,t,q,\omega)(z)$.

Thus, $(\phi(s,t,q_{n},\omega)(z))_{n\in\Z_+}$ is a sequence of real numbers such that each subsequence of it has a converging sub-subsequence. Since all the limiting points coincide (and equal to $\phi(s,t,q,\omega)(z)$), we see by a standard argument that
\begin{equation*}
\lim_{n\to\infty}\phi(s,t,q_n,\omega)(z)= \phi(s,t,q,\omega)(z).\qedhere
\end{equation*}
\end{proof}

\section{Proofs of Lemma~\ref{L:VietR} (Kolmogorov  theorem)}\label{proofKol}

To give a proof of the ``global'' version of the Kolmogorov continuity theorem we need the following ``local'' version. Recall the definition of distance $d_a$ given in \eqref{distdef}.
\begin{Lemma}\label{L:Viet}
Let $X(x,y)$, $x\in[0,1]$, $y\in[0,1]^{2}$ be a continuous random field with values in $\R$. Assume that there exist nonnegative constants $C_1$, $a=(a_1,a_2)\in(0,1]^2$, $\alpha, \beta_1, \beta_2$ such that
the inequality
\begin{equation*}
\E |X(x_1,y_1)-X(x_1,y_2)-X(x_2,y_1)+X(x_2,y_2)|^{\alpha}\le C_1 |x_1-x_2|^{\beta_1}d_a(y_1-y_2)^{\beta_2}
\end{equation*}
is satisfied for any $x_1,x_2\in[0,1]$, $y_1,y_2\in[0,1]^{2}$.

Then for any $\gamma_1\in(0,(\beta_1-1)/\alpha)$ and $\gamma_2\in(0,(\beta_2-1/a_1-1/a_2)/\alpha)$ there exist a set $\Omega^*\subset\Omega$ with $\P(\Omega^*)=1$ and a random variable $K$  with $\E K^{\alpha}\le C_1$ such that for any $\omega\in\Omega'$, $x_1,x_2\in[0,1]$, $y_1,y_2\in[0,1]^{2}$ we have
\begin{equation*}
|X(x_1,y_1)-X(x_1,y_2)-X(x_2,y_1)+X(x_2,y_2)|\le  C_2 K(\omega) |x_1-x_2|^{\gamma_1}d_a(y_1-y_2)^{\gamma_2}
\end{equation*}
and the constant $C_2>0$ depends only on $a$, $\alpha$, $\beta_i$, $\gamma_i$ but not on $C_1$ or the field $X$.
\end{Lemma}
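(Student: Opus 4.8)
The plan is to run the dyadic chaining argument behind the Kolmogorov--Chentsov theorem, adapted at the same time to the anisotropic ``distance'' $d_a$ and to the \emph{product} shape of the modulus on the right-hand side (we may assume $C_1>0$, the case $C_1=0$ being trivial). First I would set up two families of dyadic grids: for $m\ge0$ let $G^x_m:=\{k2^{-m}:0\le k\le 2^m\}$ be the usual dyadic grid for $x\in[0,1]$, and for $n\ge0$ let $G^y_n\subset[0,1]^2$ be the grid with mesh $2^{-\lceil n/a_j\rceil}$ in the $j$-th coordinate, so that two neighbours of $G^y_n$ (points differing by one step in a single coordinate) are at $d_a$-distance at most $2\cdot2^{-n}$, while $\# G^y_n\le C\,2^{n(1/a_1+1/a_2)}$; both families are nested in $m$, resp.\ $n$. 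Put
\[
M_{m,n}:=\max\bigl|X(\bar x_1,\bar y_1)-X(\bar x_1,\bar y_2)-X(\bar x_2,\bar y_1)+X(\bar x_2,\bar y_2)\bigr|,
\]
the maximum running over all $\bar x_1,\bar x_2\in G^x_m$ with $|\bar x_1-\bar x_2|\le 2^{-m}$ and all neighbouring (or equal) $\bar y_1,\bar y_2\in G^y_n$. Applying the hypothesis to each of the $\le C\,2^{m}2^{n(1/a_1+1/a_2)}$ admissible quadruples and using $\max_i|Z_i|^\alpha\le\sum_i|Z_i|^\alpha$ yields
\[
\E M_{m,n}^\alpha\le C\,C_1\,2^{-m(\beta_1-1)}\,2^{-n(\beta_2-1/a_1-1/a_2)},
\]
with $C$ depending only on $a,\alpha,\beta_1,\beta_2$.

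Next, fix $\gamma_1\in(0,(\beta_1-1)/\alpha)$ and $\gamma_2\in(0,(\beta_2-1/a_1-1/a_2)/\alpha)$. Precisely these two conditions make the series $S:=\sum_{m,n\ge0}2^{(\gamma_1\alpha-\beta_1+1)m}2^{(\gamma_2\alpha-\beta_2+1/a_1+1/a_2)n}$ convergent, $S$ depending only on $a,\alpha,\beta_i,\gamma_i$. I would then set
\[
K:=\Bigl(\tfrac{1}{C\,S}\sum_{m,n\ge0}2^{\gamma_1\alpha m}2^{\gamma_2\alpha n}M_{m,n}^\alpha\Bigr)^{1/\alpha}.
\]
By the previous display $\E K^\alpha\le C_1$, so $K$ is finite on a set $\Omega^*$ with $\P(\Omega^*)=1$, and on $\Omega^*$ one has $M_{m,n}\le (CS)^{1/\alpha}K\,2^{-\gamma_1 m}2^{-\gamma_2 n}$ for all $m,n\ge0$.

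Finally I would reconstruct the rectangular increment at arbitrary points by telescoping. Given $x_1\ne x_2$ in $[0,1]$ and $y_1\ne y_2$ in $[0,1]^2$, pick $m,n$ with $2^{-m-1}<|x_1-x_2|\le 2^{-m}$ and $2^{-n-1}<d_a(y_1-y_2)\le2^{-n}$, join each $x_i$ to the nearest point of $G^x_m$ and each $y_i$ to the nearest point of $G^y_n$ by dyadic paths refining through all higher levels, and expand: first telescoping $X(x_1,\cdot)-X(x_2,\cdot)$ along the $x$-path, then each resulting thin-in-$x$ rectangular increment along the $y$-paths, writes the full rectangular increment as an absolutely convergent double sum of grid-cell rectangular increments (the sum equalling $\Delta X$ by continuity of $X$), with only $O(1)$ cells at each scale $(m',n')$, $m'\ge m$, $n'\ge n$. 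Bounding each cell by $M_{m',n'}$ and summing the geometric series in $m'$ and $n'$ gives, on $\Omega^*$,
\[
|X(x_1,y_1)-X(x_1,y_2)-X(x_2,y_1)+X(x_2,y_2)|\le C_2\,K(\omega)\,|x_1-x_2|^{\gamma_1}d_a(y_1-y_2)^{\gamma_2},
\]
with $C_2$ depending only on $a,\alpha,\beta_i,\gamma_i$, as claimed.

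The routine-looking but genuinely delicate point is this last step. Since only the \emph{mixed} (rectangular) increment of $X$ is under control --- not the one-sided increments $X(x_1,y)-X(x_2,y)$ or $X(x,y_1)-X(x,y_2)$ --- the telescoping must be organised so that every term it produces is again a rectangular increment over a grid cell, which forces the two-parameter, Wiener-sheet-type bookkeeping with independent scale indices $m'$ and $n'$; it is also this independence of the two chains that yields the product modulus $|x_1-x_2|^{\gamma_1}d_a(y_1-y_2)^{\gamma_2}$ rather than a sum of the two.
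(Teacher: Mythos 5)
Your chaining argument is correct, but it is worth knowing that the paper does not actually prove this lemma at all: its ``proof'' is a one-line reference to Kunita's multiparameter Kolmogorov theorem (\cite[Theorem~1.4.4]{Kuni}), with pointers to \cite[Theorem~1.4.1]{Kuni}, \cite[Theorem~3.1]{Hule} (a Garsia--Rodemich--Rumsey route) and \cite[Corollary~1.2]{Walsh}. What you supply is a self-contained proof of exactly the type underlying those references: anisotropic dyadic grids matched to $d_a$, a union/sum bound $\E M_{m,n}^\alpha\le C\,C_1\,2^{-m(\beta_1-1)}2^{-n(\beta_2-1/a_1-1/a_2)}$, a normalized series defining $K$ so that $\E K^\alpha\le C_1$ exactly (with the combinatorial and geometric-series constants pushed into $C_2$, which then depends only on $a,\alpha,\beta_i,\gamma_i$ as required), and a genuinely two-parameter telescoping in which every term is again a rectangular increment over a grid cell, with $O(1)$ cells per scale pair $(m',n')$, $m'\ge m$, $n'\ge n$ --- this independence of the two chains is indeed what produces the product modulus, and you correctly flag it as the delicate point, since only mixed increments are controlled. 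Two cosmetic remarks: the conditions $\gamma_i>0$ force $\beta_1>1$ and $\beta_2>1/a_1+1/a_2$, and in the final step $d_a(y_1-y_2)$ may lie in $(1,2]$, so the chosen $n$ should be truncated at $0$ (the level-$0$ grid has boundedly many points, so this only affects the constant). Compared with the paper's citation, your approach buys a transparent, reference-free argument tailored to the rectangular-increment and anisotropic form actually needed; the citation buys brevity and, via Hu--Le, a GRR-type alternative that avoids explicit dyadic bookkeeping.
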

\begin{proof}
The lemma is essentially \cite[Theorem~1.4.4]{Kuni}. See also the proofs of \cite[Theorem~1.4.1]{Kuni}, \cite[Theorem~3.1]{Hule} and \cite[Corollary~1.2]{Walsh}.
\end{proof}

\begin{proof}[Proof of Lemma~\ref{L:VietR}]
Fix $\gamma_1\in(0,(\beta_1-1)/\alpha)$ and $\gamma_2\in(0,(\beta_2-1/a_1-1/a_2)/\alpha)$. We divide the spaces $\R$ and $\R^2$ into unit cubes.  By Lemma~\ref{L:Viet} there exists a universal constant $C_1=C_1(a, \alpha, \beta_i, \gamma_i, C)$ such that for any $m\in\Z$, $n\in\Z^{2}$,  $x_1,x_2\in [m,m+1]$, $y_1,y_2\in n+[0,1]^2$ we have on some set of full measure $\Omega^*_{n,m}\subset \Omega$
\begin{equation*}
|X(x_1,y_1)-X(x_1,y_2)-X(x_2,y_1)+X(x_2,y_2)|\le C_1 K_{m,n}(\omega) |x_1-x_2|^{\gamma_1}d_a(y_1-y_2)^{\gamma_2}
\end{equation*}
and  $\E K_{m,n}^\alpha(\omega)\le 1$. Set
\begin{equation*}
K(\omega):=\sup_{m,n} \frac{K_{m,n}(\omega)}{(|m|\vee|n|\vee1)^{3/\alpha}}.
\end{equation*}
We claim that $\E K^\alpha<\infty$. Indeed,
\begin{equation*}
\E K^\alpha=\E\sup_{m,n} \frac{K^\alpha_{m,n}}{(|m|\vee|n|\vee1)^3}\le \E \sum_{m,n} \frac{K^\alpha_{m,n}}{(|m|\vee|n|\vee1)^3}\le  \sum_{m,n} \frac{1}{(|m|\vee|n|\vee1)^3}<\infty.
\end{equation*}
Therefore the random variable $K(\omega)$ is almost surely finite. Hence there exists a set
$\Omega^*_{\infty}\subset\Omega$ with $\P(\Omega^*_{\infty})=1$ such that $K(\omega)<\infty$ for $\omega\in \Omega^*_{\infty}$. Thus, on
$\Omega^*:=\bigcap\limits_{n,m}\Omega^*_{n,m}\cap\Omega^*_{\infty}$ we have for any $x_1,x_2\in [m,m+1]$, $y_1,y_2\in n+[0,1]^2$
\begin{multline}\label{Poluch1}
|X(x_1,y_1)-X(x_1,y_2)-X(x_2,y_1)+X(x_2,y_2)|\\
\le C_2 (|m|\vee|n|\vee1)^{3/\alpha} K(\omega) |x_1-x_2|^{\gamma_1}d_a(y_1-y_2)^{\gamma_2}.
\end{multline}
This implies estimate \eqref{KolmogR}.

To establish inequality \eqref{KolmogR1term} we use the second assumption of the theorem.
Arguing exactly in the same way, we derive from \eqref{secassum} that there exists a set of full measure $\widetilde\Omega^*$, a random variable $\widetilde K(\omega)$
and a constant $\widetilde C=\widetilde C(a, \alpha, \beta_i, \gamma_i, C)$ such that $\E \widetilde K^\alpha\le 1$
and for any $m\in\Z$, $n\in\Z^2$, $x_1,x_2\in [m,m+1]$ we have on $\widetilde\Omega^*$
\begin{equation*}
|X(x_1,n)-X(x_2,n)|\le \widetilde C (|m|\vee|n|\vee1)^{3/\alpha} K(\omega) |x_1-x_2|^{\gamma_1}.
\end{equation*}
This inequality together with \eqref{Poluch1} yields that for $x_1,x_2\in [m,m+1]$, $y_1\in n+[0,1]^2$ one has
on $\widetilde\Omega^*\cap\Omega^*$
\begin{equation*}
|X(x_1,y)-X(x_2,y)|\le \widetilde C_1 (|m|\vee|n|\vee1)^{3/\alpha}\widetilde K_1(\omega) |x_1-x_2|^{\gamma_1}
\end{equation*}
for some random variable $\widetilde K_1$ such that $\E \widetilde K_1^\alpha\le 1$. This proves \eqref{KolmogR1term}.
\end{proof}

\section{Proof of Proposition~\ref{P:MB} (Moment bound)}\label{proofprop}

The proof of this proposition is long and tedious. Note that all the difficulties come from the fact that the function
$t\mapsto V(t,z)$ is not a semimartingale; if this were the case, then an application of It\^o's lemma would imply the required bound.
In our proof we were inspired by the ideas from \cite[Section~4]{CG}.

We begin with the following observation. We note that the process $(V(\cdot,z))_{z\in\R}$ is stationary. Hence for any $z_1,z_2\in\R$
\begin{equation}\label{Stav}
\Law(V(\cdot,z_1),V(\cdot,z_2))=\Law(V(\cdot,z_1-z_2),V(\cdot,0)).
\end{equation}
Therefore, it will be sufficient to establish inequality \eqref{mom1} in Proposition~\ref{P:MB} only for $z_1=z$, $z_2=0$, $z\in\R$. Since we have assumed that $|z_1-z_2|\le 1$, it is enough to prove \eqref{mom1} for $z_1=z$, $z_2=0$, $|z| \le 1$.
%

To present the proof  we need to introduce a number of new objects. We fix $T>0$, the function $b$ that appears in the statement of Proposition~\ref{P:MB} and consider the random function
\begin{equation}\label{Kkk}
H(t,z,\alpha,\beta):=b'(V(t,z)+\alpha)-b'(V(t,0)+\beta),\quad t\in[0,T],\,\, z,\alpha,\beta\in\R.
\end{equation}

Recall that it is assumed that $b$ is a bounded differentiable function with bounded derivative. Without loss of generality we suppose that
\begin{equation}\label{normless1}
\|b\|_{\infty}\le 1
\end{equation}
(otherwise we consider the function $\widetilde{b}:=b/\|b\|_{\infty}$ instead of $b$). All  constants that appear in this section
do not depend on the function $b$ (satisfying condition \eqref{normless1}).

Fix $z,\alpha,\beta\in\R$, $t_1,t_2\in[0,T]$, $t_1<t_2$ and define a martingale $M^{t_1,t_2}=(M^{t_1,t_2}_t)_{t_1\le t\le t_2}$, where
\begin{equation}\label{Mmm}
M^{t_1,t_2}_t:=\E \bigl[\int_{t_1}^{t_2} H(r,z,\alpha,\beta)\,dr\bigl|\F_t\bigr],\quad t_1\le t\le t_2.
\end{equation}
Recall that $(\F_t)_{t\ge0}$ is the filtration associated with $\dot W$. By definition, for any $z\in\R$, $0\le s\le t$ the random variable $V(s,t,z,\omega)$ is $\F_t$-measurable.

Using the new notation and taking into account formula \eqref{Stav}, we can rewrite  the left-hand side of inequality \eqref{mom1} in Proposition~\ref{P:MB} as
$\E |M_{t_2}^{t_1,t_2}|^p$. We clearly have
\begin{equation}\label{mainest}
\E |M_{t_2}^{t_1,t_2}|^p\le  C \E |M_{t_1}^{t_1,t_2}|^p+ C \E|M_{t_2}^{t_1,t_2}-M_{t_1}^{t_1,t_2}|^p.
\end{equation}
The first term in the right-hand side of \eqref{mainest} is easy to bound. Namely we first estimate $\E \bigl[H(r,z,\alpha,\beta)|\F_{t_1}\bigr]$
for $r\in[t_1,t_2]$ (this is done in Lemma~\ref{L:AL:fs}) and then apply the integral Minkowski inequality.

To estimate the second term in the right-hand side of \eqref{mainest}, we first calculate the quadratic variation of the martingale $M^{t_1,t_2}$ (that is, $[ M^{t_1,t_2}]_t$) and then apply the Burk\-holder--Davis--Gundy inequality.

Let us briefly explain how we estimate  $[ M^{t_1,t_2}]_t$. To simplify the notation, if there is no ambiguity, further we drop the superindex $(t_1,t_2)$ and write $M_t$ instead of $M^{t_1,t_2}_t$. Recall that for continuous martingales quadratic variation $[\cdot]_t$ equals predictable quadratic variation $\langle\cdot\rangle_t$. To calculate $\langle M\rangle_t$ we use the following identity valid for $t_1\le r\le s\le t_2$
\begin{align}\label{veryimpsec8}
M_s-M_r=&\int_{t_1}^{t_2} \E ([H(t,z,\alpha,\beta)\bigl|\F_s\bigr] - \E[H(t,z,\alpha,\beta)\bigl|\F_r\bigr])\,dt\nnn\\
=&\int_s^{t_2} \E ([H(t,z,\alpha,\beta)\bigl|\F_s\bigr] - \E[H(t,z,\alpha,\beta)\bigl|\F_r\bigr])\,dt\nnn\\
&+\int_r^s H(t,z,\alpha,\beta)\,dt-\int_r^s \E[H(t,z,\alpha,\beta)\bigl|\F_r\bigr]\,dt\nnn\\
=:&I_1(r,s,t_2)+I_2(r,s)-I_3(r,s).
\end{align}
Note that the terms $I_1$, $I_2$, $I_3$ depend also on $\alpha,\beta,z$. To simplify the notation we omit this dependence.

Thus, we can bound the term $\E \bigl((M_s-M_r)^2\bigr|\F_r\bigr)$, which is
the main ingredient of $\langle M\rangle_t$, as follows
\begin{equation*}
\E \bigl((M_s-M_r)^2\bigr|\F_r\bigr)\le  C \E (I_1(r,s,t_2)^2|\F_r)+C \E (I_2(r,s)^2|\F_r)+ C  I_3(r,s)^2,
\end{equation*}
where we have also used $\F_r$-measurability of $I_3(r,s)$. The corresponding bounds for the terms in right--hand side of the above inequality
are obtained in Lemmas~\ref{L:AL:fs}--\ref{L:AL:ftrm}. We combine all these bounds together in Lemma~\ref{L:AL:cont}.

To prove that the martingale $M$ is continuous we split it into two parts
$M:=L+N$, where
\begin{align}
L_t&:=I_2(t_1,t)=\int_{t_1}^t H(r,z,\alpha,\beta)\,dr,\quad t_1\le t \le t_2;\label{N}\\
N_t&:=\E \bigl[\int_{t}^{t_2} H(r,z,\alpha,\beta)\,dr\bigl|\F_t\bigr],\quad t_1\le r\le t_2.\label{Mtilde}
\end{align}
Since $H$ is bounded, the process $L$ is obviously continuous. To prove that $N$ is also continuous we employ the Kolmogorov continuity theorem. We use the identity $N_s-N_r=I_1(r,s,t_2)-I_3(r,s)$ valid for $t_1\le r\le s\le t_2$. This is done in Lemma~\ref{L:quadcov}.

Finally we calculate  quadratic variation $[M]_t$, which, by the continuity of $M$, is equal to $\langle M\rangle_t$. This is
done also in Lemma~\ref{L:quadcov}. At the end of the section we combine the obtained estimates and prove Proposition~\ref{P:MB}.

We start  with three auxiliary statements about the Gaussian density function.
\begin{Lemma}\label{L:GE:main} For any $\delta_1,\delta_2\in[0,1]$ there exists $C=C(\delta_1,\delta_2)$ such that for any $a_1, a_2\in\R$, $t>0$  we have the following bounds:
\begin{align}
\int_{\R}|&p_t(x+a_1)-p_t(x)|\,dx\le C |a_1|^{\delta_1}t^{-\delta_1/2};\label{GE0}\\[1ex]
\int_{\R}|&\frac{\d p_t}{\d x}(x+a_1)-\frac{\d p_t}{\d x}(x)|\,dx\le C |a_1|^{\delta_1}t^{-(1+\delta_1)/2};\label{GE1}\\[1ex]
\int_{\R}|&\frac{\d p_t}{\d x}(x+a_1+a_2)-\frac{\d p_t}{\d x}(x+a_1)-\frac{\d p_t}{\d x}(x+a_2)+\frac{\d p_t}{\d x}(x)|\,dx\nnn\\
&\phantom{\frac{\d p_t}{\d x}(x+a_1)-\frac{\d p_t}{\d x}(x)|\,dx}\le C |a_1|^{\delta_1}|a_2|^{\delta_2}t^{-(1+\delta_1+\delta_2)/2}\label{GE2}.
\end{align}
\end{Lemma}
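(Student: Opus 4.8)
The plan is to reduce all three inequalities to two elementary facts about the Gaussian kernel and then interpolate. The first fact is parabolic scaling: writing $p_1$ for the standard Gaussian density, one has $\frac{\d^k}{\d x^k}p_t(x)=t^{-(k+1)/2}\,p_1^{(k)}(t^{-1/2}x)$ for $k=0,1,2,3$, and therefore
\begin{equation*}
\Bigl\|\frac{\d^k}{\d x^k}p_t\Bigr\|_{L^1(\R)}=c_k\,t^{-k/2},\qquad c_k:=\int_\R\bigl|p_1^{(k)}(y)\bigr|\,dy<\infty .
\end{equation*}
The second fact is the standard $L^1$ translation estimate: for any $f$ with $f,f'\in L^1(\R)$, any $a\in\R$ and any $\delta\in[0,1]$,
\begin{equation*}
\int_\R|f(x+a)-f(x)|\,dx\le\min\bigl(2\|f\|_{L^1},\,|a|\,\|f'\|_{L^1}\bigr)\le\bigl(2\|f\|_{L^1}\bigr)^{1-\delta}\bigl(|a|\,\|f'\|_{L^1}\bigr)^{\delta},
\end{equation*}
where the first bound uses the triangle inequality together with $f(x+a)-f(x)=\int_0^a f'(x+s)\,ds$ and Fubini, and the second uses $\min(A,B)\le A^{1-\delta}B^{\delta}$ for $A,B\ge0$.

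Given these, \eqref{GE0} is immediate by taking $f=p_t$ and exponent $\delta_1$, since $\|p_t\|_{L^1}=1$ and $\|p_t'\|_{L^1}=c_1 t^{-1/2}$. Likewise \eqref{GE1} follows by taking $f=\frac{\d p_t}{\d x}$ and exponent $\delta_1$, since $\|\frac{\d p_t}{\d x}\|_{L^1}=c_1 t^{-1/2}$ and $\|\frac{\d^2 p_t}{\d x^2}\|_{L^1}=c_2 t^{-1}$; the resulting power of $t$ is $-\tfrac12(1-\delta_1)-\delta_1=-(1+\delta_1)/2$, as required.

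For \eqref{GE2} I would set $g:=\frac{\d p_t}{\d x}$ and $G(x):=g(x+a_2)-g(x)$, so that the integrand on the left of \eqref{GE2} is exactly $|G(x+a_1)-G(x)|$, and then apply the translation estimate to $f=G$ with exponent $\delta_1$. This leaves $\|G\|_{L^1}$ and $\|G'\|_{L^1}$ to be controlled. But $\|G\|_{L^1}$ is precisely the left-hand side of \eqref{GE1}, hence $\le C|a_2|^{\delta_2}t^{-(1+\delta_2)/2}$; and $\|G'\|_{L^1}=\int_\R|g'(x+a_2)-g'(x)|\,dx$ is controlled by applying the translation estimate once more, now to $g'=\frac{\d^2 p_t}{\d x^2}$ with exponent $\delta_2$, using $\|\frac{\d^2 p_t}{\d x^2}\|_{L^1}=c_2 t^{-1}$ and $\|\frac{\d^3 p_t}{\d x^3}\|_{L^1}=c_3 t^{-3/2}$, which gives $\le C|a_2|^{\delta_2}t^{-(2+\delta_2)/2}$. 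Substituting both bounds, the power of $|a_1|$ comes out to $\delta_1$, the power of $|a_2|$ to $\delta_2(1-\delta_1)+\delta_2\delta_1=\delta_2$, and the power of $t$ to $-\tfrac{1+\delta_2}{2}(1-\delta_1)-\tfrac{2+\delta_2}{2}\delta_1=-\tfrac{1+\delta_1+\delta_2}{2}$, which is exactly \eqref{GE2}.

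I do not expect any genuine obstacle: the endpoint cases $\delta_i\in\{0,1\}$ are trivial and the interpolation inequality absorbs everything in between. The only point requiring a moment of care is the bookkeeping of the exponents of $t$ in the last step, together with the decision to iterate the translation estimate in the $a_1$-direction only \emph{after} the $a_2$-increment has been absorbed into the $L^1$-norms of $G$ and $G'$; once that order is fixed, the computation is routine.
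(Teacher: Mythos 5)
Your proposal is correct and follows essentially the same route as the paper: interpolating, via $\min(A,B)\le A^{1-\delta}B^{\delta}$, between the trivial $L^1$ bound and the bound obtained from the fundamental theorem of calculus (i.e.\ the $L^1$ norm of the next derivative), with the $t$-dependence supplied by parabolic scaling. Your sequential treatment of \eqref{GE2} --- absorbing the $a_2$-increment into $\|G\|_{L^1}$ and $\|G'\|_{L^1}$ before applying the translation estimate in the $a_1$-direction --- is a clean, correct way of carrying out what the paper dismisses as ``a similar argument,'' and the exponent bookkeeping checks out.
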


Lemma~\ref{L:GE:main} is purely technical and its proof is placed in the Appendix.

\begin{Lemma}\label{L:GE:gauss}
Let $b$ be a bounded differentiable function with bounded derivative and $\|b\|_\infty\le1$. Then for any $\delta_1,\delta_2,\delta_3\in[0,1]$ there exists a constant $C=C(\delta_1,\delta_2,\delta_3)$  such that for any $a_0, a_1, a_2,a_3\in\R$ and any Gaussian random variable $X$ with zero mean and variance $\Var X= \sigma^2$  we have the following bounds:
\begin{align}
\Bigl|\E\bigl(b'(X+&a_0+a_1)-b'(X+a_0)\bigr)\Bigr|
\le C \sigma^{-1-\delta_1}|a_1|^{\delta_1};\label{GE2terms}\\[2ex]
\Bigl|\E\bigl(b'(X+&a_0+a_1+a_2)-b'(X+a_0+a_1)-b'(X+a_0+a_2)+b'(X+a_0)\bigr)\Bigr|\nnn\\
\le& C \sigma^{-1-\delta_1-\delta_2}|a_1|^{\delta_1}|a_2|^{\delta_2};\label{GE4terms}\\
\Bigl|\E\bigl(b'(X+&a_0+a_1+a_2)-b'(X+a_0+a_3)-b'(X+a_0+a_2)+b'(X+a_0)\bigr)\Bigr|\nonumber\\
\le&  C \sigma^{-1-\delta_3}|a_1-a_3|^{\delta_3}+
C \sigma^{-1-\delta_1-\delta_2}|a_1|^{\delta_1}|a_2|^{\delta_2}.\label{GEbi}
\end{align}

\end{Lemma}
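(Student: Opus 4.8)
The plan is to write each expectation as an integral of $b'$ against the law of $X$, which in the notation of the paper is exactly the heat kernel $p_{\sigma^2}$, then integrate by parts to move the derivative from $b'$ onto the Gaussian kernel, and finally invoke Lemma~\ref{L:GE:main} with $t=\sigma^2$. We may assume $\sigma>0$, the case $\sigma=0$ being trivial.

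First I would use that for any $c\in\R$ one has $\E\, b'(X+c)=\int_\R b'(y)\,p_{\sigma^2}(y-c)\,dy$, so the left-hand side of \eqref{GE2terms} equals $\bigl|\int_\R b'(y)\bigl(p_{\sigma^2}(y-a_0-a_1)-p_{\sigma^2}(y-a_0)\bigr)\,dy\bigr|$. Integrating by parts — the boundary terms vanish since $b$ is bounded while $p_{\sigma^2}$ decays super-polynomially — and then substituting $y=x+a_0$, this becomes $\bigl|\int_\R b(x+a_0)\bigl(\frac{\d p_{\sigma^2}}{\d x}(x-a_1)-\frac{\d p_{\sigma^2}}{\d x}(x)\bigr)\,dx\bigr|\le\|b\|_\infty\int_\R\bigl|\frac{\d p_{\sigma^2}}{\d x}(x-a_1)-\frac{\d p_{\sigma^2}}{\d x}(x)\bigr|\,dx$. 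Now $\|b\|_\infty\le1$ together with estimate \eqref{GE1} applied with $t=\sigma^2$ (and $-a_1$ in place of $a_1$, which does not change $|a_1|^{\delta_1}$) bounds the right-hand side by $C|a_1|^{\delta_1}(\sigma^2)^{-(1+\delta_1)/2}=C\sigma^{-1-\delta_1}|a_1|^{\delta_1}$, which is \eqref{GE2terms}. The bound \eqref{GE4terms} follows identically: after integration by parts and the shift $y=x+a_0$ the four-term combination is dominated by $\int_\R\bigl|\frac{\d p_{\sigma^2}}{\d x}(x-a_1-a_2)-\frac{\d p_{\sigma^2}}{\d x}(x-a_1)-\frac{\d p_{\sigma^2}}{\d x}(x-a_2)+\frac{\d p_{\sigma^2}}{\d x}(x)\bigr|\,dx$, and since $\frac{\d p_t}{\d x}$ is odd this integral is invariant under $x\mapsto-x$, so estimate \eqref{GE2} with $t=\sigma^2$ gives the claimed bound $C\sigma^{-1-\delta_1-\delta_2}|a_1|^{\delta_1}|a_2|^{\delta_2}$.

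For \eqref{GEbi} I would reduce to the two cases already proved via the algebraic identity $b'(X+a_0+a_1+a_2)-b'(X+a_0+a_3)-b'(X+a_0+a_2)+b'(X+a_0)=\bigl(b'(X+a_0+a_1+a_2)-b'(X+a_0+a_1)-b'(X+a_0+a_2)+b'(X+a_0)\bigr)+\bigl(b'(X+a_0+a_1)-b'(X+a_0+a_3)\bigr)$. Taking expectations, the first group is bounded by $C\sigma^{-1-\delta_1-\delta_2}|a_1|^{\delta_1}|a_2|^{\delta_2}$ using \eqref{GE4terms}, and the second group equals $\E\bigl(b'(X+(a_0+a_3)+(a_1-a_3))-b'(X+(a_0+a_3))\bigr)$, which \eqref{GE2terms} (with $a_0+a_3$ for $a_0$, $a_1-a_3$ for $a_1$, $\delta_3$ for $\delta_1$) bounds by $C\sigma^{-1-\delta_3}|a_1-a_3|^{\delta_3}$; adding the two gives \eqref{GEbi}. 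The only genuinely delicate point is justifying the integration by parts (vanishing of the boundary terms and the interchange of differentiation with the integral); the rest is bookkeeping of the shifts $a_0,\dots,a_3$ and a direct appeal to Lemma~\ref{L:GE:main}.
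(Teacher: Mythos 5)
Your proposal is correct and follows essentially the same route as the paper: integrate by parts to move the derivative from $b'$ onto the Gaussian kernel $p_{\sigma^2}$, bound by $\|b\|_\infty$ times the $L^1$-norm of the kernel differences, and invoke Lemma~\ref{L:GE:main} with $t=\sigma^2$; the paper likewise obtains \eqref{GEbi} as a direct consequence of \eqref{GE2terms} and \eqref{GE4terms}, exactly via the decomposition you write out.
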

\begin{proof}
First, we establish bound \eqref{GE4terms}. Fix arbitrary $\delta_1, \delta_2\in[0,1]$. We use integration by parts and rewrite the left-hand side of \eqref{GE4terms} in the following form:
\begin{align*}
\Bigl|\E\bigl(b'(&X+a_0+a_1+a_2)-b'(X+a_0+a_1)-b'(X+a_0+a_2)+b'(X+a_0)\bigr)\Bigr|
\nonumber\\
=&\Bigl|\int_{\R} \bigl(b(x+a_0+a_1+a_2)-b(x+a_0+a_2)-b(x+a_0+a_1)+b(x+a_0)\bigr)p'_{\sigma^2} (x)\,dx\Bigr|\\
=&\Bigl|\int_{\R} b(x+a_0)\bigl(p'_{\sigma^2} (x-a_1-a_2)-p'_{\sigma^2} (x-a_2)-p'_{\sigma^2} (x-a_1)+p'_{\sigma^2} (x-a_0)\bigr)\,dx\Bigr|\\
\le& C \sigma^{-1-\delta_1-\delta_2}|a_1|^{\delta_1}|a_2|^{\delta_2},
\end{align*}
where for the last inequality we used boundedness of the function $b$ and Lemma~\ref{L:GE:main}.
Estimate \eqref{GE2terms} is derived by the same argument. Estimate \eqref{GEbi} is a direct corollary of \eqref{GE2terms} and \eqref{GE4terms}.
\end{proof}

Recall the definition of $\V$ in \eqref{functI}.

\begin{Lemma}\label{L:GE:WN}
For any $\delta\in[0,1]$ there exists a constant $C=C(\delta)>0$ such that for any $0\le r<s\le t$, $z,z_1,z_2\in\R$ we have
\begin{align}
&\Var \V(r,s,t,z)\le C (s-r)(t-s)^{-1/2};\label{var}\\
&\Var \bigl(\V(r,s,t,z_1)-\V(r,s,t,z_2)\bigr)\le C |z_1-z_2|^\delta(s-r)(t-s)^{-1/2-\delta/2};\label{dif1}\\
&\Var \bigl(\V(r,s,t,z_1)-\V(r,s,t,z_2)\bigr)\le C |z_1-z_2|\label{dif2}.
\end{align}
\end{Lemma}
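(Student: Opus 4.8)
The plan is to compute all three variances exactly via the Wiener--It\^o isometry and then reduce each to an elementary estimate for the Gaussian semigroup. Since $\V(r,s,t,z)=\int_r^s\int_\R p_{t-t'}(z-z')\,W(dt',dz')$ is the integral of a deterministic kernel against space--time white noise, and since that kernel lies in $L^2([r,s]\times\R)$ (indeed $\int_r^s\int_\R p_{t-t'}(z-z')^2\,dz'\,dt'<\infty$ because, as computed below, it equals a multiple of $\int_r^s(t-t')^{-1/2}\,dt'<\infty$), Walsh's isometry gives
\[
\Var \V(r,s,t,z)=\int_r^s\int_\R p_{t-t'}(z-z')^2\,dz'\,dt',
\]
and, by linearity of the Wiener integral,
\[
\Var\bigl(\V(r,s,t,z_1)-\V(r,s,t,z_2)\bigr)=\int_r^s\int_\R\bigl(p_{t-t'}(z_1-z')-p_{t-t'}(z_2-z')\bigr)^2\,dz'\,dt'.
\]

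The key computational input is the Gaussian convolution identity $\int_\R p_a(y)\,p_b(y-w)\,dy=p_{a+b}(w)$ for $a,b>0$, $w\in\R$. It yields at once $\int_\R p_a(y)^2\,dy=p_{2a}(0)=(4\pi a)^{-1/2}$ and, after the substitution $y=z_1-z'$ and writing $w:=z_1-z_2$,
\[
\int_\R\bigl(p_a(z_1-z')-p_a(z_2-z')\bigr)^2\,dz'=2\bigl(p_{2a}(0)-p_{2a}(w)\bigr)=\frac{1}{\sqrt{\pi a}}\Bigl(1-e^{-w^2/(4a)}\Bigr).
\]
Substituting these into the two displayed isometries and changing variables $a=t-t'$ (so that $a$ ranges over $[t-s,t-r]$) gives
\[
\Var \V(r,s,t,z)=\frac{1}{\sqrt{\pi}}\int_{t-s}^{t-r}a^{-1/2}\,da=\frac{1}{\sqrt{\pi}}\bigl((t-r)^{1/2}-(t-s)^{1/2}\bigr),\qquad
\Var\bigl(\V(r,s,t,z_1)-\V(r,s,t,z_2)\bigr)=\frac{1}{\sqrt{\pi}}\int_{t-s}^{t-r}a^{-1/2}\Bigl(1-e^{-w^2/(4a)}\Bigr)\,da.
\]

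From here the three bounds follow by elementary estimates. For \eqref{var} I would write $(t-r)^{1/2}-(t-s)^{1/2}=\dfrac{s-r}{(t-r)^{1/2}+(t-s)^{1/2}}\le(s-r)(t-s)^{-1/2}$. For \eqref{dif2} I would extend the integral to $(0,\infty)$ and rescale $a=w^2b$, obtaining $C|w|\int_0^\infty b^{-1/2}(1-e^{-1/(4b)})\,db$, a finite constant times $|z_1-z_2|$ (the integrand is $O(b^{-1/2})$ near $0$ and $O(b^{-3/2})$ near $\infty$). For \eqref{dif1} I would use the elementary inequality $1-e^{-x}\le x^{\delta/2}$ for $x\ge0$, $\delta\in[0,1]$, so that $1-e^{-w^2/(4a)}\le|w|^{\delta}(4a)^{-\delta/2}$, and then bound $a^{-(1+\delta)/2}\le(t-s)^{-(1+\delta)/2}$ on the interval $[t-s,t-r]$, giving an integral of size at most $|w|^{\delta}(t-s)^{-(1+\delta)/2}(s-r)$ up to a constant. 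There is no genuine obstacle in this lemma: the only points needing a little care are invoking Walsh's isometry in the right form and keeping the bookkeeping in the Gaussian convolution identities straight; everything else is routine.
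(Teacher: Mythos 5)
Your proposal is correct and follows essentially the same route as the paper: both compute the variance exactly via the white-noise isometry (covariance formula), reduce it to the explicit integral $\int_{t-s}^{t-r}\frac{1}{\sqrt{\pi u}}\bigl(1-e^{-|z_1-z_2|^2/(4u)}\bigr)\,du$, and finish with elementary bounds, your use of $1-e^{-x}\le x^{\delta/2}$ and a rescaling being only a cosmetic variant of the paper's $1-e^{-x}\le x\wedge 1$ case analysis. The only blemish is a harmless factor-of-$2$ slip in your intermediate expression $\frac{1}{\sqrt\pi}\int_{t-s}^{t-r}a^{-1/2}\,da$ (it should carry $\frac{1}{2\sqrt\pi}$), which does not affect any of the three estimates.
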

\begin{proof}
To prove the lemma we employ formula for the covariance of stochastic integrals, see, e.g., \cite[Proposition~5.18]{Khosh}. Inequality \eqref{var} is verified by a direct calculation. Inequalities \eqref{dif1} and \eqref{dif2} are established by the following argument:
\begin{align*}
\Var \bigl(\V(r,s,t,z_1)&-\V(r,s,t,z_2)\bigr)\\&=\Var \V(r,s,t,z_1)+\Var \V(r,s,t,z_2) - 2 \cov( \V(r,s,t,z_1), \V(r,s,t,z_2))\\
&= 2\int_r^s (p_{2(t-u)}(0)-p_{2(t-u)}(z_1-z_2))\,du\\
&=\int_{t-s}^{t-r}\frac{1}{\sqrt\pi\sqrt{u}} (1-e^{-\frac{(z_1-z_2)^2}{4u}})\,du\\
&\le \int_{t-s}^{t-r}\frac{1}{\sqrt\pi\sqrt{u}}\Bigl(\frac{(z_1-z_2)^2}{4u}\wedge 1\Bigr) \,du\\
&=\int_{(t-s)\wedge \frac{(z_1-z_2)^2}{4}}^{(t-r) \wedge \frac{(z_1-z_2)^2}{4}}\frac{1}{\sqrt\pi\sqrt{u}}\,du
+ \int_{(t-s)\vee \frac{(z_1-z_2)^2}{4}}^{(t-r) \vee \frac{(z_1-z_2)^2}{4}}\frac{(z_1-z_2)^2}{4\sqrt\pi u^{3/2}}.
\end{align*}
By considering three different cases (namely, $(z_1-z_2)^2/4\le t-s$; $t-s< (z_1-z_2)^2/4\le t-r$; $(z_1-z_2)^2/4>t-r$), we arrive to  \eqref{dif2}. We also get
\begin{equation*}
\Var \bigl(\V(r,s,t,z_1)-\V(r,s,t,z_2)\bigr)\le C |z_1-z_2|^\delta(t-s)^{-\delta/2}(\sqrt{t-r}-\sqrt{t-s}),
\end{equation*}
from which \eqref{dif1} follows immediately.
\end{proof}

Now we are moving on to calculating the quadratic variation of the martingale $M$. In the next three lemmas we will obtain a moment bound on $I_1$ (recall its definition in \eqref{veryimpsec8}).

\begin{Lemma}\label{L:AL:fs}
Let $\delta\in[0,1]$. There exists $C=C(T,\delta)$ such that for any $0\le r\le s\le t\le T$, $z, \alpha,\beta\in\R$ we have
\begin{align}\label{est1.3}
\Bigl|\E\bigl[&H(t,z,\alpha,\beta)|\F_s\bigr]-\E\bigl[H(t,z,\alpha,\beta)|\F_r\bigr]\Bigr|\nonumber\\
\le&C (t-s)^{-1/2}|\V(r,s,t,z)-\V(r,s,t,0)|\nonumber\\
&+C(t\!-\!s)^{-1/2-\delta/4}(|\V(0,s,t,z)\!-\!\V(0,s,t,0)|^{\delta}+\!|\alpha\!-\!\beta|^{\delta} )(\E |\V(r,s,t,0)|+\!|\V(r,s,t,0)|);\\
\Bigl|\E\bigl[&H(t,z,\alpha,\beta)|\F_s\bigr]\Bigr|
\le C (t-s)^{-(1+\delta)/4} \bigl(|\V(0,s,t,z)-\V(0,s,t,0)|^{\delta}+|\alpha-\beta|^{\delta} \bigr),\label{est1.31}
\end{align}
where $H$ is defined in \eqref{Kkk} .
\end{Lemma}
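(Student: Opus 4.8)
The plan is to use that the white noise has independent increments in time and is stationary in space. For $0\le s\le t$ and $z\in\R$ write $V(t,z)=\V(0,s,t,z)+V(s,t,z)$, where the first summand is $\F_s$-measurable and $V(s,t,z)$ is a centred Gaussian variable, independent of $\F_s$, with variance $\sigma^2:=\Var V(s,t,z)=\pi^{-1/2}(t-s)^{1/2}$ (a direct computation; in particular $\sigma^2$ is independent of $z$). Similarly $V(t,z)=\V(0,r,t,z)+V(r,t,z)$ with $V(r,t,z)$ independent of $\F_r$ and $\Var V(r,t,z)=\rho^2:=\sigma^2+\tau^2$, where $\tau^2:=\Var\V(r,s,t,z)=\Var\V(r,s,t,0)$. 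Because the marginal law of $V(s,t,z)$ does not depend on $z$ (and likewise for $V(r,t,\cdot)$), conditioning removes the spatial correlation and yields the representations
\begin{equation*}
\E\bigl[H(t,z,\alpha,\beta)\,|\,\F_s\bigr]=g_\sigma(\V(0,s,t,z)+\alpha)-g_\sigma(\V(0,s,t,0)+\beta),
\end{equation*}
\begin{equation*}
\E\bigl[H(t,z,\alpha,\beta)\,|\,\F_r\bigr]=g_\rho(\V(0,r,t,z)+\alpha)-g_\rho(\V(0,r,t,0)+\beta),
\end{equation*}
where $g_\sigma:=b'*p_{\sigma^2}$ and, since $p_{\rho^2}=p_{\sigma^2}*p_{\tau^2}$, $g_\rho(y)=\E_W g_\sigma(y+W)$ with $W\sim N(0,\tau^2)$ an auxiliary Gaussian variable (and $\E_W$ the expectation over $W$ only). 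These are exactly functions of the form handled by Lemma~\ref{L:GE:gauss}: \eqref{GE2terms} gives $|g_\sigma(y_1)-g_\sigma(y_2)|\le C\sigma^{-1-\delta}|y_1-y_2|^\delta$ (and, with exponent one, the Lipschitz bound $C\sigma^{-2}|y_1-y_2|$), while \eqref{GE4terms} bounds the second difference $\Delta^2_{h,c}g_\sigma(y):=g_\sigma(y+h+c)-g_\sigma(y+h)-g_\sigma(y+c)+g_\sigma(y)$ by $C\sigma^{-1-\delta_1-\delta_2}|h|^{\delta_1}|c|^{\delta_2}$. Since $\sigma^2\asymp(t-s)^{1/2}$, the prefactors $\sigma^{-2}$ and $\sigma^{-2-\delta}$ turn into $(t-s)^{-1/2}$ and $(t-s)^{-1/2-\delta/4}$.

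Estimate \eqref{est1.31} then follows at once from the first representation, \eqref{GE2terms}, and the subadditivity of $r\mapsto r^\delta$. For \eqref{est1.3} I split $H=H_1+H_2$ with $H_1:=b'(V(t,z)+\alpha)-b'(V(t,0)+\alpha)$ and $H_2:=b'(V(t,0)+\alpha)-b'(V(t,0)+\beta)$, and bound $\E[H_i\,|\,\F_s]-\E[H_i\,|\,\F_r]$ for each $i$ separately. Writing $c_0:=\V(0,r,t,0)$, $Q_0:=\V(r,s,t,0)$ (so $\V(0,s,t,0)=c_0+Q_0$), the convolution identity gives
\begin{equation*}
\E[H_2\,|\,\F_s]-\E[H_2\,|\,\F_r]=\E_W\,\Delta^2_{\alpha-\beta,\,Q_0-W}g_\sigma(c_0+\beta+W),
\end{equation*}
so \eqref{GE4terms} with exponents $\delta$ and $1$, followed by $\E_W|Q_0-W|\le|Q_0|+\E|W|$ and $\E|W|=\E|\V(r,s,t,0)|$, produces the $|\alpha-\beta|^\delta$-part of the bound.

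For $H_1$ set $e:=\V(0,r,t,z)$, $e_0:=\V(0,r,t,0)$, $Q_z:=\V(r,s,t,z)$, so $\V(0,s,t,z)=e+Q_z$; arguing as above,
\begin{equation*}
\E[H_1\,|\,\F_s]-\E[H_1\,|\,\F_r]=\E_W\bigl[g_\sigma(e+\alpha+Q_z)-g_\sigma(e_0+\alpha+Q_0)-g_\sigma(e+\alpha+W)+g_\sigma(e_0+\alpha+W)\bigr].
\end{equation*}
Inserting $\pm g_\sigma(e+\alpha+Q_0)$ turns the bracket into $\bigl(g_\sigma(e+\alpha+Q_z)-g_\sigma(e+\alpha+Q_0)\bigr)+\Delta^2_{e-e_0,\,Q_0-W}g_\sigma(e_0+\alpha+W)$; the first summand is $\le C\sigma^{-2}|Q_z-Q_0|$. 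The crucial observation is that $e-e_0=\bigl(\V(0,s,t,z)-\V(0,s,t,0)\bigr)-(Q_z-Q_0)$ and that the second difference is additive in its first increment, $\Delta^2_{h_1+h_2,c}g=\Delta^2_{h_1,c}g+\Delta^2_{h_2,c}g(\cdot+h_1)$: this decomposes the second-difference term into a piece with increment $h_1:=\V(0,s,t,z)-\V(0,s,t,0)$, bounded via \eqref{GE4terms} with exponents $\delta$ and $1$ by $C\sigma^{-2-\delta}|h_1|^\delta|Q_0-W|$, and a piece with increment $h_2:=Q_0-Q_z$, bounded with exponents $1$ and $0$ by $C\sigma^{-2}|Q_z-Q_0|$ with no $W$-dependence. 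Averaging over $W$, using $\E_W|Q_0-W|\le|\V(r,s,t,0)|+\E|\V(r,s,t,0)|$, and adding the $H_2$ contribution assembles exactly the right-hand side of \eqref{est1.3}. The main obstacles are the first step — reading off the conditional expectations, which relies on independence of time increments together with spatial stationarity, so that only the marginal (not joint) Gaussian laws of $V(s,t,z)$ and $V(s,t,0)$ enter — and this second-difference additivity trick in the $H_1$ term, which is precisely what keeps the ``wrong-level'' increment $\V(0,r,t,z)-\V(0,r,t,0)$ from producing a term absent from \eqref{est1.3}.
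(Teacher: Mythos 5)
Your proof is correct and follows essentially the same route as the paper: both exploit independence of the white-noise time slices together with spatial stationarity to write the conditional expectations as Gaussian convolutions of $b'$ with variance of order $\sqrt{t-s}$ (your $g_\sigma$ and $g_\rho=g_\sigma*p_{\tau^2}$ play the role of the paper's $J$ and the averaging over the middle increment $Y_1$), and then apply Lemma~\ref{L:GE:gauss} with the exponent choices $1$ and $\delta$ to the resulting first and second differences, which is where $(t-s)^{-1/2}$ and $(t-s)^{-1/2-\delta/4}$ come from. The differences are only in bookkeeping: the paper invokes \eqref{GEbi} once with a suitable choice of $a_0,\dots,a_3$, whereas you split $H$ into a spatial part and a shift part and use additivity of second differences, which incidentally produces exactly the $\V(0,s,t,\cdot)$-increment written in \eqref{est1.3}.
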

\begin{proof}
We start with the proof of inequality \eqref{est1.3}. Fix $0\le r\le s \le t \le T$, $z, \alpha,\beta\in\R$. For $i=1,2$ introduce the following random variables:
\begin{equation*}
X_i:=\V(0,r,t,z_i);\,\,Y_i:=\V(r,s,t,z_i);\,\,Z_i:=\V(s,t,t,z_i),
\end{equation*}
where $z_1:=0$, $z_2:=z$. We clearly have $V(t,z_i)=X_i+Y_i+Z_i$, $i=1,2$. Note also that the random vectors $(X_1,X_2)$, $(Y_1,Y_2)$ and $(Z_1,Z_2)$ are Gaussian and independent. Define for $x,y\in\R$
\begin{equation*}
J(x,y):=\E (b'(x+Z_1)- b'(y+Z_1)).
\end{equation*}
With this notation in hand we rewrite
\begin{align}\label{H1}
\E\bigl[H(t,z,\alpha,\beta)|\F_s\bigr]&=\E\bigl[(b'(X_1+Y_1+Z_1+\alpha)-b'(X_2+Y_2+Z_2+\beta))\bigl|X_1,X_2,Y_1,Y_2\bigr]\nnn\\
&=\E\bigl[(b'(X_1+Y_1+Z_1+\alpha)-b'(X_2+Y_2+Z_1+\beta))\bigl|X_1,X_2,Y_1,Y_2\bigr]\nnn\\
&=J(X_1+Y_1+\alpha,X_2+Y_2+\beta).
\end{align}
and
\begin{align}\label{H2}
\E\bigl[H(t,z,\alpha,\beta)|\F_r\bigr]&=\E\bigl[(b'(X_1+Y_1+Z_1+\alpha)-b'(X_2+Y_2+Z_2+\beta))\bigl|X_1,X_2\bigr]\nnn\\
&=\E\bigl[(b'(X_1+Y_1+Z_1+\alpha)-b'(X_2+Y_1+Z_1+\beta))\bigl|X_1,X_2\bigr]\nnn\\
&=\E [J(X_1+Y_1+\alpha,X_2+Y_1+\beta)|X_1,X_2],
\end{align}
where we also used the fact that $\Law (Y_1)=\Law (Y_2)$ and $\Law (Z_1)=\Law (Z_2)$.

Using again the independence of the vectors $(X_1,X_2)$, $(Y_1,Y_2)$ and $(Z_1,Z_2)$, we obtain
\begin{align}\label{est1.3vtorshag}
\Bigl| \E \bigl(J(X_1&+Y_1+\alpha,X_2+Y_2+\beta)\bigl|X_1=x_1,X_2=x_2,Y_1=y_1,Y_2=y_2\bigr)\nnn\\
&-\E\bigl(J(X_1+Y_1+\alpha,X_2+Y_1+\beta)\bigl|X_1=x_1,X_2=x_2\bigr)\Bigr|\nonumber\\
=&\Bigl|  [J(x_1+y_1+\alpha,x_2+y_2+\beta)-\E J(x_1+Y_1+\alpha,x_2+Y_1+\beta)]\Bigr|\nonumber\\
\le& \E\Bigl| \E \bigl[(J(x_1+y_1+\alpha,x_2+y_2+\beta)- J(x_1+Y_1+\alpha,x_2+Y_1+\beta))\bigl|Y_1\bigr]\Bigr|.
\end{align}

To estimate the conditional expectation in the last expression we employ Lemma~\ref{L:GE:gauss}, inequality \eqref{GEbi} with the following set of parameters: $\delta_1\leftarrow1$, $\delta_2\leftarrow \delta$, $\delta_3\leftarrow 1$, $a_0\leftarrow x_2+y+\beta$, $a_1\leftarrow y_1-y$,
$a_2\leftarrow x_1-x_2+\alpha-\beta$, and $a_3\leftarrow y_2-y$. We derive
\begin{align}\label{est1.3tretshag}
|\E&  \bigl[(J(x_1+y_1+\alpha,x_2+y_2+\beta)- J(x_1+Y_1+\alpha,x_2+Y_1+\beta))\bigl|Y_1=y\bigr]|\nonumber\\
=&\Bigl|\E\bigl(b'(x_1\!+y_1\!+\!Z_1\!+\alpha)-b'(x_2\!+y_2\!+\!Z_1\!+\beta)-b'(x_1\!+y\!+\!Z_1\!+\alpha)+b'(x_2\!+y\!+\!Z_1\!+\beta)\Bigr|\nonumber\\
\le&  C (\Var {Z_1})^{-1}|a_1-a_3|+
C (\Var Z_1)^{-1-\delta/2}|a_1||a_2|^{\delta}.
\end{align}
By  Lemma~\ref{L:GE:WN} we get $\Var{Z_1}=C\sqrt{t-s}$. Using this bound and \eqref{est1.3tretshag}, we continue \eqref{est1.3vtorshag} as follows
\begin{align*}
\E\Bigl| \E &\bigl[(J(x_1+y_1+\alpha,x_2+y_2+\beta)- J(x_1+Y_1+\alpha,x_2+Y_1+\beta))\bigl|Y_1\bigr]\Bigr|\nonumber\\
\le&  C (t-s)^{-1/2}|y_1-y_2|+  C (t-s)^{-(2+\delta)/4}(|x_1-x_2|^{\delta}+|\alpha-\beta|^{\delta})(\E |Y_1|+|y_1|).
\end{align*}
An application of \eqref{H1}, \eqref{H2}  and \eqref{est1.3vtorshag}  finishes the proof of bound \eqref{est1.3}.

Finally, to establish inequality \eqref{est1.31} we just note that for any $x,y\in\R$, $\delta\in[0,1]$ estimate \eqref{GE2terms} implies
\begin{equation*}
|J(x,y)|\le C (\Var Z_1)^{-1/2-\delta/2}|x-y|^{\delta}\le C (t-s)^{-(1+\delta/4)}|x-y|^{\delta}.
\end{equation*}
Combining this with \eqref{H1} we come to \eqref{est1.31}.
\end{proof}

The next statement can be called the conditional integral Minkowski inequality. This inequality is definitely  not new; however we were not able to find its proof in the literature. So we provide it here for the completeness of the argument.

\begin{Lemma}\label{L:integr}
Let $(\xi(t))_{t\ge0}$ be a random process. Then for any $\sigma$--field $\G\subset\F$ and $0\le a\le b$  we have
\begin{align*}
\E \Bigl[\bigl(\int_a^b \xi(t)\,dt\bigr)^2\Bigl|\G\Bigr]\le \bigl(\int_a^b (\E [\xi(t)^2|\G])^{1/2}\,dt\bigr)^2
\end{align*}
\end{Lemma}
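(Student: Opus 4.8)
The plan is to expand the square, use the conditional Fubini theorem to interchange (conditional) expectation with the double time integral, and then invoke the conditional Cauchy--Schwarz inequality pointwise in the time variables. If the right-hand side is infinite there is nothing to prove, so we may assume $\int_a^b(\E[\xi(t)^2|\G])^{1/2}\,dt<\infty$ almost surely; as is implicit in the statement we also take $(t,\omega)\mapsto\xi(t,\omega)$ to be jointly measurable and $\int_a^b\xi(t)^2\,dt$ to be integrable (note that by the ordinary Cauchy--Schwarz inequality $(\int_a^b\xi(t)\,dt)^2\le(b-a)\int_a^b\xi(t)^2\,dt$, so both sides of the asserted inequality are then finite a.s.).

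Write $\eta:=\int_a^b(\E[\xi(t)^2|\G])^{1/2}\,dt$. Since both $\E[(\int_a^b\xi(t)\,dt)^2|\G]$ and $\eta^2$ are $\G$-measurable and finite a.s., it suffices to prove that $\E[\I_A(\int_a^b\xi(t)\,dt)^2]\le\E[\I_A\eta^2]$ for every $A\in\G$ (if this held with the reverse failing on a set of positive probability, testing against $A=\{\E[(\int\xi)^2|\G]>\eta^2\}\in\G$ would give a contradiction). By the tower property and Fubini's theorem (justified by $\E\int_a^b\int_a^b|\xi(s)\xi(t)|\,ds\,dt\le(b-a)\E\int_a^b\xi(t)^2\,dt<\infty$),
$$\E\Bigl[\I_A\bigl(\int_a^b\xi(t)\,dt\bigr)^2\Bigr]=\int_a^b\int_a^b\E\bigl[\I_A\,\E[\xi(s)\xi(t)|\G]\bigr]\,ds\,dt.$$
For each fixed pair $(s,t)$ the conditional Cauchy--Schwarz inequality gives $\E[\xi(s)\xi(t)|\G]\le(\E[\xi(s)^2|\G])^{1/2}(\E[\xi(t)^2|\G])^{1/2}$ a.s., and multiplying by $\I_A$ and taking expectations preserves this. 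Hence, using Tonelli's theorem for the (nonnegative) right-hand integrand,
$$\E\Bigl[\I_A\bigl(\int_a^b\xi(t)\,dt\bigr)^2\Bigr]\le\int_a^b\int_a^b\E\Bigl[\I_A\,(\E[\xi(s)^2|\G])^{1/2}(\E[\xi(t)^2|\G])^{1/2}\Bigr]\,ds\,dt=\E\bigl[\I_A\,\eta^2\bigr],$$
which is exactly what we needed.

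The argument is entirely routine; the only point requiring a little care is the measurability bookkeeping in the exchange of conditional expectation with the time integrals (the null set in conditional Cauchy--Schwarz depends on $(s,t)$), and routing the whole estimate through test sets $A\in\G$ as above sidesteps it cleanly, so I expect no genuine obstacle.
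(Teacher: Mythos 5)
Your proof is correct and follows essentially the same route as the paper's: expand the square, exchange the conditional expectation with the double time integral by Fubini, and apply the conditional Cauchy--Schwarz inequality pointwise in $(s,t)$. The only difference is that you add the test-set argument over $A\in\G$ and explicit integrability hypotheses to justify the exchange, which the paper's terse proof leaves implicit.
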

\begin{proof}
By the Fubini theorem, we have
\begin{align*}
\E \Bigl[\bigl(\int_a^b \xi(t)\,dt\bigr)^2\Bigl|\G\Bigr]&= \int_a^b \int_a^b \E [\xi(t)\xi(s)|\G]\,dtds\\
&\le \int_a^b \int_a^b (\E [\xi(t)^2|\G])^{1/2}(\E[\xi(s)^2|\G])^{1/2}\,dtds\\
&=\bigl[\int_a^b (\E [\xi(t)^2|\G])^{1/2}\,dt\bigr]^2.\qedhere
\end{align*}
\end{proof}

\begin{Lemma}\label{L:AL:ftrm}
Let $p>0$, $\delta\in(0,1)$, $\delta'\in(0,\delta)$. Then there exist a random variable $K(\omega)$ and a constant $C=C(p,T,\delta,\delta')>0$ such that $\E K(\omega)^p\le C$ and for any $0\le r\le s\le t\le T$, $z,\alpha,\beta\in\R$,
$|z|\le 1$ we have almost surely
\begin{equation}\label{estcond}
\E [I_1(r,s,t)^2|\F_r]\le K(\omega)  (s-r) (t-s)^{1/2-\delta/2}\bigl(|z|^{\delta'}+|\alpha-\beta|^{2\delta}\bigr),
\end{equation}
where $I_1$ is defined in \eqref{veryimpsec8}.

We also have for $\delta\in(0,1/2)$
\begin{equation}\label{estcond4m}
\E (I_1(r,s,t))^4 \le C (s-r)^2 (t-s)^{1-\delta}(|z|^{2\delta}+|\alpha-\beta|^{4\delta}\bigr).
\end{equation}

\end{Lemma}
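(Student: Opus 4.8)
The plan is to treat \eqref{estcond} first — it is the substantive statement — and then obtain \eqref{estcond4m} by running the same scheme unconditionally. By the stationarity identity \eqref{Stav} (and since $|z_1-z_2|\le1$) it suffices to take $z_1=z$, $z_2=0$ with $|z|\le1$; write $I_1(r,s,t)=\int_s^t g(u)\,du$ with $g(u):=\E[H(u,z,\alpha,\beta)\mid\F_s]-\E[H(u,z,\alpha,\beta)\mid\F_r]$. First I would apply the conditional integral Minkowski inequality (Lemma~\ref{L:integr}) with $\G=\F_r$ to reduce to a pointwise bound:
\[
\E\bigl[I_1(r,s,t)^2\mid\F_r\bigr]\le\Bigl(\int_s^t\bigl(\E[g(u)^2\mid\F_r]\bigr)^{1/2}\,du\Bigr)^2 .
\]
To bound $\E[g(u)^2\mid\F_r]$ I would square the estimate \eqref{est1.3} of Lemma~\ref{L:AL:fs} (applied with $t\leftarrow u$) and take $\E[\cdot\mid\F_r]$. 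Two structural facts drive the computation: (i) $\V(r,s,u,\cdot)$ is a stochastic integral against the white noise on $[r,s]\times\R$ and hence independent of $\F_r$, so conditional expectations of functions of $(\V(r,s,u,z),\V(r,s,u,0))$ equal unconditional ones and are governed by \eqref{var}, \eqref{dif1}, \eqref{dif2} of Lemma~\ref{L:GE:WN}; (ii) the $\F_s$-measurable factor $\V(0,s,u,z)-\V(0,s,u,0)$ appearing in \eqref{est1.3} decomposes as $(\V(0,r,u,z)-\V(0,r,u,0))+(\V(r,s,u,z)-\V(r,s,u,0))$, the second summand again independent of $\F_r$ and the first $\F_r$-measurable and, on the probability-one set $\Omega_V$ of Lemma~\ref{L:GE:leon}, bounded by $K(\omega)|z|^{1/2-\eps}$ for $|z|\le1$ via \eqref{lipshzet}. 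Choosing $\eps=\eps(\delta,\delta')$ small enough that $\delta(1-2\eps)\ge\delta'$, every power of $|z|$ produced collapses into $|z|^{\delta'}$, and one arrives at $\E[g(u)^2\mid\F_r]\le K(\omega)(s-r)(u-s)^{-3/2-\delta/2}(|z|^{\delta'}+|\alpha-\beta|^{2\delta})$; since $\delta<1$, the exponent $-3/4-\delta/4$ left after taking the square root exceeds $-1$, so the $du$-integral converges to a multiple of $(t-s)^{1/4-\delta/4}$, and squaring gives \eqref{estcond}. Here $K$ is a fixed power of the random constant from Lemma~\ref{L:GE:leon}, which has finite moments of every order, so $\E K(\omega)^p\le C$.

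For \eqref{estcond4m} I would repeat the argument with the $L^4$ integral Minkowski inequality, $\E[I_1(r,s,t)^4]^{1/4}\le\int_s^t(\E[g(u)^4])^{1/4}\,du$, and with unconditional fourth moments everywhere. Now $\V(0,s,u,z)-\V(0,s,u,0)$ need not be split: every Gaussian factor is estimated through its variance by \eqref{var}, \eqref{dif1}, \eqref{dif2} together with the deterministic fact that a centred Gaussian has finite moments of all orders, and a single Cauchy--Schwarz step disentangles the product of the (dependent) factors $\V(0,s,u,\cdot)$ and $\V(r,s,u,0)$. Imposing $\delta<1/2$ keeps the surviving negative power of $(u-s)$ integrable against $du$ and yields the exponents $2$ on $(s-r)$ and $1-\delta$ on $(t-s)$; since no ``good'' set enters, the resulting constant is deterministic.

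The one genuinely delicate point is fact (ii) in the proof of \eqref{estcond}: the $\F_r$-measurable part of $\V(0,s,u,\cdot)$ must be controlled \emph{pathwise} — not merely in $L^2$ — by the \emph{global} H\"older regularity of $\V$ supplied by Lemma~\ref{L:GE:leon}, with the H\"older exponent tuned to the gap $\delta-\delta'>0$ so that the spurious powers of $|z|$ are reabsorbed into $|z|^{\delta'}$. Everything else is the routine but somewhat tedious verification that the negative powers of $(u-s)$ coming from the variance bounds of Lemma~\ref{L:GE:WN} survive squaring (respectively raising to the fourth power) and then an integration in $u$ — which is exactly the role of the restrictions $\delta<1$ and $\delta<1/2$.
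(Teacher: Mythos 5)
Your proposal is correct and follows essentially the same route as the paper: the pointwise conditional (resp.\ unconditional) moment bound obtained by squaring (resp.\ raising to the fourth power) estimate \eqref{est1.3} and invoking Lemma~\ref{L:GE:WN} together with the pathwise bound \eqref{lipshzet} of Lemma~\ref{L:GE:leon} for the $\F_r$-measurable part, followed by the (conditional) integral Minkowski inequality of Lemma~\ref{L:integr}. Your added detail on splitting $\V(0,s,u,\cdot)$ into an $\F_r$-measurable and an independent piece, and tuning the H\"older exponent to absorb the powers of $|z|$ into $|z|^{\delta'}$, is exactly what the paper leaves implicit.
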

\begin{proof}
Fix $p>0$, $\delta\in(0,1)$, $\delta'\in(0,\delta)$. We employ estimate \eqref{est1.3} from Lemma~\ref{L:AL:fs} and use the corresponding estimates from Lemma~\ref{L:GE:leon} and Lemma~\ref{L:GE:WN} to obtain that there exists a random variable $K(\omega)=K(\omega,p,T,\delta,\delta')>0$ and a constant $C=C(p,T,\delta,\delta')$ such that $\E K(\omega)^p\le C$ and
\begin{multline}\label{integr}
\E\Bigl[\Bigl(\E\bigl(H(t',z,\alpha,\beta)|\F_s\bigr)-\E\bigl(H(t',z,\alpha,\beta)|\F_r\bigr)\Bigr)^2\Bigl|\F_r\Bigr]\\
\le  K(\omega) (t'-s)^{-3/2-\delta/2}(s-r)\bigl(|z|^{\delta'}+|\alpha-\beta|^{2\delta}\bigr)
\end{multline}
for any $0\le r\le s\le t'$, $z,\alpha,\beta\in\R$, $|z|\le 1$.
An application of the conditional integral Minkowski inequality (Lemma~\ref{L:integr}) to \eqref{integr}
leads to \eqref{estcond}.

Similarly, to establish bound \eqref{estcond4m} we also use estimate \eqref{est1.3} from Lemma~\ref{L:AL:fs} and  Lemma~\ref{L:GE:WN}. We get
\begin{equation*}
\E\Bigl(\E\bigl(H(t',z,\alpha,\beta)|\F_s\bigr)-\E\bigl(H(t',z,\alpha,\beta)|\F_r\bigr)\Bigr)^4
\le C (t'-s)^{-3-2\delta}(s-r)^2\bigl(|z|^{2\delta}+|\alpha-\beta|^{4\delta}\bigr),
\end{equation*}
The proof is competed by an application of the integral Minkowski inequality.
\end{proof}

\bigskip
We are almost ready to carry out the main goal of this subsection, that is calculating quadratic variation of the martingale $M^{t_1,t_2}$ (recall that this martingale is defined in \eqref{Mmm}).  As mentioned before, a remaining technical step is to prove that this martingale is continuous. Recall that to do it we have split $M=M^{t_1,t_2}$ into two parts: $M=N+L$, see their definitions in \eqref{N} and \eqref{Mtilde}.

\begin{Lemma}\label{L:AL:cont}
Let $p>0$, $\delta\in(0,1)$, $\delta'\in(0,\delta)$. Then there exist a random variable $K(\omega)>0$ and a constant $C=C(p,T,\delta,\delta')$ such that $\E K(\omega)^p\le C$ and for any $0\le t_1\le r\le s\le t_2\le T$, $z,\alpha,\beta\in\R$,  $|z|\le 1$ we have
\begin{align}
\E[(M_s-M_r)^2|\F_r]&\le K(\omega) (s-r)(t_2-r)^{1/2-\delta/2}(|z|^{\delta'}+|\alpha-\beta|^{2\delta})+8\|b'\|_{\infty}^2 (s-r)^2;\label{cond}\\
\E( N_s-N_r)^4&\le  C (s-r)^2 \bigl(|z|^{2\delta}+|\alpha-\beta|^{4\delta}+\|b'\|_{\infty}^4(s-r)^2\bigr).\label{uncond}
\end{align}
\end{Lemma}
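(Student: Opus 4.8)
The plan is to obtain both estimates directly from the decomposition \eqref{veryimpsec8} together with the moment bounds of Lemma~\ref{L:AL:ftrm}, while tracking the numerical constants carefully. Recall from \eqref{N} and \eqref{Mtilde} that $M=N+L$ with $L_s-L_r=I_2(r,s)$ and, by \eqref{veryimpsec8}, $N_s-N_r=I_1(r,s,t_2)-I_3(r,s)$, so that
\[
M_s-M_r=I_1(r,s,t_2)+\bigl(I_2(r,s)-I_3(r,s)\bigr).
\]
First I would record three elementary facts. Since $|H(\cdot,z,\alpha,\beta)|\le 2\|b'\|_{\infty}$, one has $|I_2(r,s)|\le 2\|b'\|_{\infty}(s-r)$ and $|I_3(r,s)|\le 2\|b'\|_{\infty}(s-r)$. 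Next, by the conditional Fubini theorem $I_3(r,s)=\E[I_2(r,s)\,|\,\F_r]$, whence $\E[I_2(r,s)-I_3(r,s)\,|\,\F_r]=0$ and
\[
\E\bigl[(I_2(r,s)-I_3(r,s))^2\,|\,\F_r\bigr]\le \E[I_2(r,s)^2\,|\,\F_r]\le 4\|b'\|_{\infty}^2(s-r)^2.
\]
Finally, by the tower property $\E[I_1(r,s,t_2)\,|\,\F_r]=0$.

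To prove \eqref{cond}: since $I_1(r,s,t_2)$ and $I_2(r,s)-I_3(r,s)$ both have zero conditional mean given $\F_r$, conditional Cauchy--Schwarz together with the inequality $(\sqrt a+\sqrt b)^2\le 2a+2b$ gives
\[
\E[(M_s-M_r)^2\,|\,\F_r]\le 2\,\E[I_1(r,s,t_2)^2\,|\,\F_r]+2\,\E[(I_2(r,s)-I_3(r,s))^2\,|\,\F_r].
\]
For the first term I would apply \eqref{estcond} with $t=t_2$; since $\delta<1$ the exponent $1/2-\delta/2$ is nonnegative, so $(t_2-s)^{1/2-\delta/2}\le(t_2-r)^{1/2-\delta/2}$, and the term is bounded by $2K(\omega)(s-r)(t_2-r)^{1/2-\delta/2}(|z|^{\delta'}+|\alpha-\beta|^{2\delta})$. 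The second term is at most $8\|b'\|_{\infty}^2(s-r)^2$ by the displayed conditional-variance bound above. Absorbing the factor $2$ into the random constant $K$ (which does not affect $\E K^p\le C$) yields \eqref{cond}.

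To prove \eqref{uncond} (for which the range $\delta\in(0,1/2)$ of \eqref{estcond4m} is the relevant one): from $N_s-N_r=I_1(r,s,t_2)-I_3(r,s)$ and $(a-b)^4\le 8a^4+8b^4$,
\[
\E(N_s-N_r)^4\le 8\,\E I_1(r,s,t_2)^4+8\,\E I_3(r,s)^4.
\]
Here \eqref{estcond4m} with $t=t_2\le T$ bounds the first summand by $C(s-r)^2(|z|^{2\delta}+|\alpha-\beta|^{4\delta})$, and the crude bound $|I_3(r,s)|\le 2\|b'\|_{\infty}(s-r)$ bounds the second by $16\|b'\|_{\infty}^4(s-r)^4$; combining these and writing $(s-r)^4=(s-r)^2\cdot(s-r)^2$ gives \eqref{uncond}. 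I do not expect a substantial obstacle here: this is essentially a bookkeeping step assembling Lemmas~\ref{L:AL:fs}, \ref{L:integr} and \ref{L:AL:ftrm}. The only delicate points are producing the sharp constant $8\|b'\|_{\infty}^2$ (via the conditional-variance estimate for $I_2-I_3$ and the $(\sqrt a+\sqrt b)^2\le 2a+2b$ step, rather than a wasteful triangle inequality) and the monotonicity $(t_2-s)^{1/2-\delta/2}\le(t_2-r)^{1/2-\delta/2}$, which uses $\delta<1$.
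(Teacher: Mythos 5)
Your proposal is correct and follows essentially the same route as the paper: the decomposition $M_s-M_r=I_1+I_2-I_3$, $N_s-N_r=I_1-I_3$ from \eqref{veryimpsec8}, the conditional and unconditional fourth-moment bounds of Lemma~\ref{L:AL:ftrm} for $I_1$, and the crude bounds $|I_2|,|I_3|\le 2\|b'\|_\infty(s-r)$. Your treatment of the $I_2-I_3$ term via $I_3=\E[I_2\,|\,\F_r]$ is in fact slightly cleaner than the paper's (it yields the constant $8\|b'\|_\infty^2$ exactly), but this is a cosmetic difference, not a different method.
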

\begin{proof}
Recall that according to our definitions (see the beginning of this section) we have
\begin{equation*}
\E \bigl((M_s-M_r)^2\bigr|\F_r\bigr)\le  C \E (I_1(r,s,t_2)^2|\F_r)+C \E (I_2(r,s)^2|\F_r)+ C  I_3(r,s)^2.
\end{equation*}
By Lemma~\ref{L:AL:ftrm},
\begin{equation*}
\E [I_1(r,s,t_2)^2|\F_r]\le K(\omega)  (s-r) (t_2-s)^{1/2-\delta/2}\bigl(|z|^{\delta'}+|\alpha-\beta|^{2\delta}\bigr).
\end{equation*}
Note that the terms $I_2$ and $I_3$ are of order $s-r$. Therefore they will not impact the quadratic variation. Thus, we estimate
them using a very rough estimate:
\begin{equation}\label{trchlen}
|I_2(r,s)|\le 2\|b'\|_{\infty}(s-r);\quad |I_3(r,s)|\le 2\|b'\|_{\infty}(s-r).
\end{equation}
Hence
\begin{equation*}
\E [I_2(r,s)^2|\F_r]+I_3(r,s)^2\le 8\|b'\|_{\infty}^2 (s-r)^2.
\end{equation*}
Thus, we have
\begin{equation*}
\E[(M_s-M_r)^2|\F_r]\le  K(\omega)(s-r)(t_2-s)^{1/2-\delta/2}(|z|^{\delta'}+|\alpha-\beta|^{2\delta})
+8\|b'\|_{\infty}^2 (s-r)^2,
\end{equation*}
from which \eqref{cond} follows immediately.

In a similar manner,
\begin{align*}
\E[(N_s- N_r)^4]&=\E (I_1(r,s,t_2)- I_3(r,s))^4\\
&\le C (\E I_1(r,s,t_2)^4+\E I_3(r,s)^4)\\
&\le  C (s-r)^2 (t_2-s)^{1-\delta}\bigl(|z|^{2\delta}+|\alpha-\beta|^{4\delta}\bigr)+
C    \|b'\|_{\infty}^4(s-r)^4,
\end{align*}
where we have used \eqref{trchlen} and estimate \eqref{estcond4m} from Lemma~\ref{L:AL:ftrm}. This implies
\eqref{uncond}.
\end{proof}

Now we have all the tools to bound the quadratic variation of  $M^{t_1,t_2}$.

\begin{Lemma}\label{L:quadcov}
For any $0\le t_1\le t_2\le T$, $z,\alpha,\beta\in\R$, $|z|\le 1$ the martingale $M^{t_1,t_2}$ defined in \eqref{Mmm} is continuous.
Moreover, for any $\delta\in(0,1)$, $\delta'\in(0,\delta)$ and any $p>0$ there exist a  random variable $K(\omega)>0$ and a constant $C=C(p,T,\delta,\delta')$ such that $\E K(\omega)^p\le C$ and
\begin{equation}\label{quadrocov}
[ M^{t_1,t_2},M^{t_1,t_2}]_{t_2}\le K(\omega)(|z|^{\delta'}+|\alpha-\beta|^{2\delta})(t_2-t_1)^{3/2-\delta/2}.
\end{equation}
\end{Lemma}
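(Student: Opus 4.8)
The plan is to follow the two-part strategy sketched just before the statement: first show that $M^{t_1,t_2}$ admits a continuous version, and then read off its quadratic variation from the conditional second moments of its increments supplied by Lemma~\ref{L:AL:cont}.

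\emph{Continuity.} Fix $t_1\le t_2$ and $z,\alpha,\beta\in\R$ with $|z|\le1$, and decompose $M=L+N$ as in \eqref{N}--\eqref{Mtilde}. Since $b'$ is bounded we have $|H(\cdot,z,\alpha,\beta)|\le 2\|b'\|_\infty$, so $L_t=\int_{t_1}^{t}H(r,z,\alpha,\beta)\,dr$ is Lipschitz, hence continuous, in $t$. For $N$ I use the identity $N_s-N_r=I_1(r,s,t_2)-I_3(r,s)$, valid for $t_1\le r\le s\le t_2$, together with the fourth moment bound \eqref{uncond}; for the fixed parameters at hand it reads $\E(N_s-N_r)^4\le C|s-r|^2$ with $C$ independent of $r,s$. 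Since the exponent $2$ exceeds $1$, the Kolmogorov continuity criterion yields a continuous modification of $N$, and adding $L$ produces a continuous modification of $M$. As $M$ is a martingale with respect to the (augmented, right--continuous) Brownian filtration it also possesses a c\`adl\`ag version, which is indistinguishable from the continuous modification just built; hence $M$ itself is continuous. In particular $M$ is a bounded, continuous $L^2$--martingale, so $[M]_\cdot=\langle M\rangle_\cdot$.

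\emph{Quadratic variation.} I invoke the standard approximation of the predictable quadratic variation of a continuous $L^2$--martingale by conditional sums: for any sequence of partitions $t_1=r_0^n<\dots<r^n_{k_n}=t_2$ with mesh $|\pi_n|\to0$,
\[
V_n:=\sum_{i=0}^{k_n-1}\E\bigl[(M_{r^n_{i+1}}-M_{r^n_i})^2\bigm|\F_{r^n_i}\bigr]\longrightarrow\langle M\rangle_{t_2}\qquad\text{in probability.}
\]
Passing to a subsequence along which this convergence is almost sure, I bound each summand by \eqref{cond} from Lemma~\ref{L:AL:cont}, with $K=K(\omega)$ the random variable there satisfying $\E K^p\le C(p,T,\delta,\delta')$. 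Summing over $i$ and using that $1/2-\delta/2>0$, so $(t_2-r^n_i)^{1/2-\delta/2}\le(t_2-t_1)^{1/2-\delta/2}$ for $r^n_i\ge t_1$, together with $\sum_i(r^n_{i+1}-r^n_i)=t_2-t_1$ and $\sum_i(r^n_{i+1}-r^n_i)^2\le|\pi_n|(t_2-t_1)$, gives
\[
V_n\le K(\omega)\bigl(|z|^{\delta'}+|\alpha-\beta|^{2\delta}\bigr)(t_2-t_1)^{3/2-\delta/2}+8\|b'\|_\infty^2\,|\pi_n|\,(t_2-t_1).
\]
Letting $n\to\infty$ along the almost surely convergent subsequence kills the last term and, since $[M]_{t_2}=\langle M\rangle_{t_2}$, yields \eqref{quadrocov} with this same $K$.

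\emph{Where the difficulty lies.} The analytic content has already been spent in Lemmas~\ref{L:AL:fs}--\ref{L:AL:cont}, so the present lemma is essentially assembly. The one point requiring genuine care is the step above that upgrades the in-probability convergence $V_n\to\langle M\rangle_{t_2}$ to an \emph{almost sure, pathwise} bound on $[M]_{t_2}$ via a subsequence extraction: this is precisely what turns \eqref{quadrocov} into a bound by a random but $L^p$--integrable constant, which is the form needed in order to apply the Burkholder--Davis--Gundy inequality in the proof of Proposition~\ref{P:MB}. A secondary routine point is the remark, used in the continuity argument, that a martingale over a Brownian filtration which has a continuous modification is continuous up to indistinguishability.
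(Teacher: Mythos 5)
Your proposal is correct and follows essentially the same route as the paper: decompose $M=L+N$, get continuity of $N$ from the fourth-moment bound \eqref{uncond} via Kolmogorov's criterion, and then bound the conditional-sum approximation of $\langle M\rangle_{t_2}$ term by term using \eqref{cond}, with the $\|b'\|_\infty^2(s-r)^2$ contribution vanishing as the mesh goes to zero. Your extra care about upgrading the in-probability convergence of the conditional sums to an almost-sure bound along a subsequence is a point the paper glosses over, but it does not change the argument.
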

\begin{proof}
Fix $0\le t_1\le t_2\le T$, $z,\alpha,\beta\in\R$, $|z|\le 1$.
First we prove that the martingale $M^{t_1,t_2}$ is continuous. We make use of Lemma~\ref{L:AL:cont} to obtain for $r,s\in[t_1,t_2]$
\begin{equation*}
\E( N_s- N_r)^4\le C(T,\alpha,\beta,z)(\|b'\|_{\infty}^4\vee1) (s-r)^2.
\end{equation*}
Hence, by the Kolmogorov continuity theorem, the process $(N_s)_{t_1\le s \le t_2}$ is continuous. The process
process $(L_s)_{t_1\le s \le t_2}$ is also continuous since the function $H$ is bounded. Thus, the martingale $M^{t_1,t_2}$ is continuous as a sum of two continuous processes.

We move on and calculate predictable quadratic variation of the martingale $M^{t_1,t_2}$. We employ Lemma~\ref{L:AL:cont} to get
\begin{align*}
\langle M^{t_1,t_2}&,M^{t_1,t_2}\rangle_{t_2}=\lim_{n\to\infty}\sum_{k=0}^{n-1} \E \bigl[ (M^{t_1,t_2}_{t_1+(k+1)(t_2-t_1)/n}-M^{t_1,t_2}_{t_1+k(t_2-t_1)/n})^2\bigl|\F_{t_1+k(t_2-t_1)/n}\bigr]\\
\le& K(\omega)(|z|^{\delta'}+|\alpha-\beta|^{2\delta})(t_2-t_1)^{3/2-\delta/2}\lim_{n\to\infty} \frac{\sum_{k=0}^{n-1} (n-k)^{1/2-\delta/2}}{n^{3/2-\delta/2}}\\
&+8\|b'\|_{\infty}^2(t_2-t_1)^2\lim_{n\to\infty}n^{-1}\\
\le& K(\omega)(|z|^{\delta'}+|\alpha-\beta|^{2\delta})(t_2-t_1)^{3/2-\delta/2}.
\end{align*}
By above, the martingale $M^{t_1,t_2}$ is continuous. Hence its quadratic variation is equal to its predictable quadratic variation, that is $[M^{t_1,t_2},M^{t_1,t_2}]_t=\langle M^{t_1,t_2},M^{t_1,t_2}\rangle_t$. This proves \eqref{quadrocov}.
\end{proof}

Finally, we can prove Proposition~\ref{P:MB}.

\begin{proof}[Proof of Proposition~\ref{P:MB}]
Fix $p\ge1$. As we already pointed out at the beginning of this Section, it is sufficient to show \eqref{mom1} only for $z_1=z$, $z_2=0$, where $|z|\le1$. Note also that
\begin{equation*}
\E\Bigl(\int_{t_1}^{t_2} \bigl(b'(V(u,z)+\alpha)-b'(V(u,0)+\beta)\bigr)\,du\Bigr)^p=
\E |M^{t_1,t_2}_{t_2}|^p.
\end{equation*}

It follows from the Burkholder--Davis--Gundy inequality and Lemma~\ref{L:quadcov} that
\begin{align}\label{imiappl0}
\E |M^{t_1,t_2}_{t_2}-&M^{t_1,t_2}_{t_1}|^p\le C \E [ M^{t_1,t_2},M^{t_1,t_2}]_{t_2}^{p/2}\nonumber\\
&\le C(t_2-t_1)^{p(3/4-\delta/4)}(|z|^{ p\delta'/2}+|\alpha-\beta|^{p\delta}),
\end{align}
where we have also used the finiteness of the $p/2$-th moment of $K(\omega)$. By the integral Minkowski inequality,
\begin{align}\label{imiappl}
\E |M^{t_1,t_2}_{t_1}|^p&=\|M^{t_1,t_2}_{t_1}\|_p^p= \Bigl\|\int_{t_1}^{t_2}\E\bigl[ H(t,z,\alpha,\beta)\bigl|\F_{t_1}\bigr]\,dt\Bigr\|_p^p\nonumber\\
&\le \Bigl(\int_{t_1}^{t_2} \|\E\bigl[ H(t,z,\alpha,\beta)\bigl|\F_{t_1}\bigr]\|_p\,dt\Bigr)^p
\end{align}
We employ estimate \eqref{est1.31} from Lemma~\ref{L:AL:fs} and Lemma~\ref{L:GE:WN} to get
\begin{equation*}
\|\E\bigl[ H(t,z,\alpha,\beta)\bigl|\F_{t_1}\bigr]\|_p
\le C(t-t_1)^{-(1+\delta)/4} (|z|^{\delta/2}+|\alpha-\beta|^{\delta} ).
\end{equation*}
Combining this inequality with \eqref{imiappl} we obtain
\begin{equation*}
\E |M^{t_1,t_2}_{t_1}|^p\le
 C (t_2-t_1)^{p(3/4-\delta/4)}(|z|^{p\delta/2}+|\alpha-\beta|^{p\delta} ).
\end{equation*}
Inequality \eqref{mom1} follows now from this, \eqref{imiappl0} and the following simple observation:
\begin{equation*}
\E |M^{t_1,t_2}_{t_2}|^p\le C(
\E |M^{t_1,t_2}_{t_1}|^p+\E |M^{t_1,t_2}_{t_2}-M^{t_1,t_2}_{t_1}|^p).
\end{equation*}
The second part of Proposition~\ref{P:MB} (inequality \eqref{mom2}) is established along the same lines as
inequality \eqref{mom1}.
\end{proof}

\section{Proofs of Lemma~\ref{L:cont-prev} and Lemma~\ref{L:prob0}}\label{S:PCL}

\begin{proof}[Proof of Lemma~\ref{L:cont-prev}]
The proof is based on Proposition~\ref{P:MB} and the ideas from the proofs of \cite[Lemma~3.3]{Dav} and \cite[Lemma~3.4]{Shap}.
Before we begin the proof let us just note that in the case $f_1=const$, $f_2=const$ inequality \eqref{mest} is almost obvious; one should just
calculate the corresponding expected value and apply the Chebyshev inequality, see inequality \eqref{obs1} below. If $f_1$ and $f_2$ are piecewise constant functions, establishing \eqref{mest} is also not very difficult. Thus to prove  \eqref{mest} for the general case, we first
establish it for a suitable piecewise-continuous approximations of $f_1$, $f_2$ and then pass to the limit. Let us carry out this plan.

Fix $\eps>0$, $M>0$, $N\in\N$, $h>1/2$ and take sufficiently small $\delta$. Without loss of generality and to simplify the notation, we assume $T=1$, $\mu=1$; the proof for other values of $T$, $\mu$ is exactly the same.  We will choose a specific $\delta$ later. Let $U$ be any set such that $|U|\le\delta$. Let us verify that inequality \eqref{mest} holds on a large enough set for all $z\in[-N,N]$, $r\in\N$, $f_1,f_2\in \mathbf{H}(h,1,1,M)$. Note that by definition of the class $\mathbf{H}$, we have $\sup_{z\in[-N,N]} |f_i (z)|\le NM$, $i=1,2$.

The proof strategy relies on two observations. First, we note that the random variable $V(t,z)$ has a Gaussian distribution
with mean $0$ and variance $\sqrt{t/\pi}$, see \eqref{var}. Hence for any $x\in\R$, $z\in\R$, $0\le t_1< t_2\le2$ we have
\begin{align}\label{obs1}
\E \int_{t_1}^{t_2} \I_U (V(t,z)+x)dt&= \int_{t_1}^{t_2}\int_\R \I_U (y+x)p_{\sqrt{t/\pi}}(y)\,dy dt\nnn\\
&\le C_1 \bigl(\int_\R \I_U(y) dy\bigr)^{1/2}\int_{t_1}^{t_2} t^{-1/8}dt\nnn\\
&\le C_1 \sqrt{\delta} |t_2-t_1|^{7/8}.
\end{align}

We fix large integer $m>r$ and split the interval $[0,2]$ into $2^{m+1}$ smaller subintervals. Consider the event
\begin{equation*}
A_k(\eps,x,z):=\Bigl\{\int_{k2^{-m}}^{(k+1)2^{-m}} \I_U(V(t,z)+x)\,dt<\eps 2^{-m}\Bigr\}.
\end{equation*}
By the Chebyshev inequality, \eqref{obs1} implies $\P(A_k(x,\eps,z))\ge1-C_1 \sqrt{\delta} \eps^{-1}2^{m/8}$. Thus, for the event
\begin{equation*}
A(\eps):=\bigcap_{k=0}^{2^{m+1}-1}\bigcap_{i=-MN2^{4m+1}}^{MN2^{4m+1}}\bigcap_{j=-N2^{8m}}^{N2^{8m}} A_k(\eps,i2^{-4m},j2^{-8m}).
\end{equation*}
we have
\begin{equation*}
\P(A(\eps))\ge1-C_1 MN^22^{14m}\sqrt{\delta} \eps^{-1}.
\end{equation*}

Second, we fix $\rho\in(1/2,1)$ and $\theta\in(0,1)$ such that
\begin{equation}\label{posledka_urauraura_vosklvosklvoskl}
\rho(h-1/4)-1/4-\theta>\theta.
\end{equation}
Since we have assumed that $h>1/2$, we see that such $\rho$, $\theta$ exist. We derive from Proposition~\ref{P:MB} and the Kolmogorov continuity theorem   that for $x,y\in[-2MN,2MN]$, $z_1,z_2\in[-N,N]$, $0\le t_1< t_2\le2$ we have
\begin{equation*}
\bigl|\!\int_{t_1}^{t_2}\! (\I_U(V(t,z_1)+x)-\I_U(V(t,z_2)+y))\,dt\bigr|\le K(\omega)(t_2-t_1)^{3/4-\rho/4-\theta}
(|x-y|^{\rho}+|z_1-z_2|^{\rho/2}),
\end{equation*}
where $\E K(\omega)\le C_2$. Define for $\kappa>0$ an event
\begin{equation*}
B(\kappa):=\{K(\omega)\le \kappa\}.
\end{equation*}
The Chebyshev inequality implies that $\P (B(\kappa))\ge 1-C_2 \kappa^{-1}$. Therefore
\begin{equation}\label{meramn}
\P(A(\eps)\cap B(\kappa))\ge 1-C_1 MN^22^{14m}\sqrt{\delta} \eps^{-1}-C_2 \kappa^{-1}.
\end{equation}
We choose now large $\kappa$ such that $C_2 \kappa^{-1}\le \eps/2$.

It follows from the above definitions and a change of variables $t':=t+s$ in the integral 
that on event $A(\eps)\cap B(\kappa)$ we have
\begin{equation}\label{OdinEvent}
\int_{k2^{-m}}^{(k+1)2^{-m}} \I_U(V(t+s,z)+x)\,dt\le2\cdot2^{-m} (\eps+\kappa2^{-m}).
\end{equation}
for all $x\in[-2MN,2MN]$, $s\in[0,1]$, $z\in[-N,N]$, $0\le k \le 2^{m}-1$.

Now we fix $r\in\N$, $s\in[0,1]$, $z\in[-N,N]$, $f_1,f_2\in \mathbf{H}(h,1,1,M)$. Put
\begin{equation*}
f(t,z):=f_1(t,z)+\lambda_{r}(f_2)(t,z).
\end{equation*}
It follows from Fatou's lemma and the fact the set $U$ is open that
\begin{equation}\label{Fatouc}
\int_{0}^{1} \I_U(V(t+s,z)+f(t,z))\,dt\le \liminf_{n\to\infty} \int_0^1 \I_U (V(t+s,z)+\lambda_n(f)(t,z))\,dt,
\end{equation}
Hence for any $n\ge m$ we have
\begin{align}\label{estnepr}
\int_0^{1}\I_U &(V(t+s,z)+\lambda_n(f)(t,z))\,dt\le\int_0^{1} \I_U (V(t+s,z)+\lambda_m(f)(t,z))\,dt\nnn\\
&+\sum_{l=m}^{\infty} \Bigl|\int_0^1 \bigl(\I_U (V(t+s,z)+\lambda_{l+1}(f)(t,z))-
\I_U (V(t+s,z)+\lambda_l(f)(t,z))\bigr)\,dt\Bigr|.
\end{align}
The function $\lambda_m(f)$ is constant on time intervals $[k2^{-m},(k+1)2^{-m})$. Moreover, since $f_1,f_2\in\mathbf{H}(h,1,1,M)$, we see that $|\lambda_m(f)(t,z)|\le 2NM$ for $t\in[0,1]$, $z\in[-N,N]$. Therefore, \eqref{OdinEvent} yields that on $A(\eps)\cap B(\kappa)$
\begin{equation*}
\int_0^{1} \I_U (V(t+s,z)+\lambda_m(f)(t,z))\,dt\le 2\eps+\kappa2^{-m+1}.
\end{equation*}
In a similar way we estimate the second term in the right-hand side of \eqref{estnepr}. It follows from the definition of the approximation
operator $\lambda$ that $\lambda_{r}(f_2)((i+1)2^{-l})=\lambda_{r}(f_2)((i+1/2)2^{-l})$ for any $l>r$, $i=0,1,...2^l-1$. This observation, the definition of the set $B(\kappa)$ and a change of variables $t':=t+s$ in the integral imply that for $l>r$ on $A(\eps)\cap B(\kappa)$
\begin{align*}
\bigl|\int_0^1& \bigl(\I_U (V(t+s,z)+\lambda_{l+1}(f)(t,z))-\I_U (V(t+s,z)+\lambda_{l}(f)(t,z))\bigr)\,dt\bigr|\\
\le& \kappa2^{-l(3/4-\rho/4-\theta)}\sum_{i=0}^{2^l-1}
|f((i+1)2^{-l},z)-f((i+1/2)2^{-l},z)|^{\rho}\\
\le&  \kappa MN 2^{-l(3/4-5\rho/4+h\rho-\theta)}\sum_{i=0}^{2^l-1} (i+1/2)^{-\rho}\\
\le& 2(1-\rho)^{-1}\kappa MN  2^{-l(\rho(h-1/4)-1/4-\theta)}\\
\le& 2(1-\rho)^{-1}\kappa MN  2^{-l\theta},
\end{align*}
where in the last inequality we took into the account that $\rho$ and $\theta$ were chosen according to \eqref{posledka_urauraura_vosklvosklvoskl}.
Combining this with the previous
estimate and \eqref{estnepr}, we finally get on $A(\eps)\cap B(\kappa)$
\begin{equation*}
\int_0^{1}\I_U (V(t+s,z)+\lambda_n(f)(t,z))\,dt\le2\eps+\kappa2^{-m+1}+
2(1-\rho)^{-1}\kappa MN  2^{-m\theta}(1-2^{-\theta})^{-1}.
\end{equation*}
Recall that we have already chosen $\kappa$, $\rho$, $\theta$. Now we choose large $m$ such that the right-hand side of the above inequality
is less than $3 \eps$. Finally, we choose small $\delta$ such than the right-hand side of \eqref{meramn} is bigger than $1-\eps$.

Thus, we got that on the set $D:=A(\eps)\cap B(\kappa)$ we have
\begin{equation*}
\int_0^{1}\I_U (V(t+s,z)+\lambda_{n}(f)(t,z))\,dt\le 3\eps
\end{equation*}
and $\P(D)\ge1-\eps$. This together with \eqref{Fatouc} yields the statement of the lemma.
\end{proof}

\begin{proof}[Proof of Lemma~\ref{L:prob0}]
The lemma is proved by a straightforward application of Lemma~\ref{L:cont-prev}.
\end{proof}

\begin{appendices}
\renewcommand{\thesection}{\!\!.}
\renewcommand{\theequation}{A.\arabic{equation}}
\section{Proofs of Gaussian density estimates}

\begin{proof}[Proof of Lemma~\ref{L:GE:main}]
We begin with the first inequality. If $t=1$ and $\delta_1=0$, then \eqref{GE0} trivially follows from the bound
\begin{equation*}
\int_{\R}|p_1(x+a_1)-p_1(x)|\,dx\le 2\int_\R p_1(x)\,dx=2.
\end{equation*}

To prove \eqref{GE0} for $t=1$, $\delta_1=1$ we use the estimate
\begin{align*}
\int_{\R}|p_1(x+a_1)-p_1(x)|\,dx&\le \int_\R\int_x^{x+a_1} |p_1'(y)|\,dydx\le |a_1|\int_\R \max_{y\in[x,x+a_1]} |p_1'(y)|\,dx\\
&\le |a_1|(4\max_{x\in\R} |p_1'(x)|+2\int_{[2,+\infty)}p_1'(x)\, dx)\le M |a_1|
\end{align*}
for some $M>0$. These estimates imply \eqref{GE0} for all $\delta_1\in[0,1]$ and $t=1$. The general case (that is, $t>0$) follows now easily by the change of variables.

Inequalities \eqref{GE1} and \eqref{GE2} are established using a similar argument.
\end{proof}

\begin{proof}[Proof of Lemma~\ref{L:densest}]
Without loss of generality, assume that $s\le t$. Note that
\begin{equation*}
|p_t(z)-p_s(z)|=\left|\int_s^t \frac{\d p_u(z)}{\d u}\,du\right|\le \int_s^t \frac{p_u(z)}{2u}(1 +\frac{z^2}{u})\,du.
\end{equation*}
Hence,
\begin{align*}
\int_\R|p_t(z)-p_s(z)|)(|z|^\delta\vee1)\,dz&\le\int_s^t \int_\R\frac{p_u(z)}{2u}(1 +\frac{z^2}{u})(|z|^\delta\vee1)\,dzdu\\
&\le C \int_s^t (u^{-1}+u^{-1+\delta/2})\,du\\
&\le C(\log t -\log s).\qedhere
\end{align*}
\end{proof}

\begin{proof}[Proof of Lemma~\ref{L:GE:3der}]
We will use the following simple bound throughout the proof: there exists a constant $C>0$ such that
for any $a,b\in\R$ we have
\begin{equation*}
\Lambda_\delta(|a-b|\vee1)\le C e^{|a|+|b|}(|a|^\delta+|b|^\delta+1)\le
C e^{|a|+|b|}(|a|^\delta+1) (|b|^\delta+1)\le C \Lambda_\delta(|a|\vee1)\Lambda_\delta(|b|\vee1).
\end{equation*}

Estimate \eqref{firstestapp} follows easily from this bound:
\begin{align*}
\int_\R p_{t}(z-z') \Lambda_\delta(|z'|\vee 1)dz'&= \int_\R p_{t}(z') \Lambda_\delta(|z-z'|\vee 1)dz'\\
&\le C \Lambda_\delta(|z|\vee1)\int_\R p_{t}(z') e^{|z'|}(|z'|^\delta+1)dz'\\
&\le C \Lambda_\delta(|z|\vee1).
\end{align*}

To obtain \eqref{seqestapp} we employ the formula $\partial p_t(z)/\partial t= p_t(z)(z^2/t^2-1/t)/2$. We get
\begin{align*}
\int_{t_1}^{t_2} &\int_\R \bigl| \frac{\d }{\d t'}p_{t-t'}(z-z')\bigr| |t_2-t'|^{2/3-\delta}\Lambda_\delta(|z'|\vee 1)\, dz'dt'\\
&\le  C \Lambda_\delta(|z|\vee1)\int_{t_1}^{t_2} |t_2-t'|^{2/3-\delta} \int_\R \frac{p_{t-t'}(z')}{t-t'}(1+\frac{z'^2}{t-t'}) e^{|z'|}(|z'|^\delta+1)\, dz'dt'\\
&\le  C \Lambda_\delta(|z|\vee1)  \int_{t_1}^{t_2} |t_2-t'|^{-1/3-\delta} \int_\R p_1(z'')(1+z''^2) e^{T|z''|}(T^{\delta/2}|z''|^\delta+1)\, dz''dt'\\
&\le  C \Lambda_\delta(|z|\vee1)|t_2-t_1|^{2/3-\delta}.
\end{align*}

To get \eqref{thirdstapp} we observe that the function $x\to|\d p_1(x)/\d x|$ is decreasing for $x>2$ and is bounded by $1$ for $x\in[0,2]$. Hence for any $x,y\in\R$ with $|x|\le |y|$ we have
\begin{equation*}
|p_1(x)-p_1(y)|\le\int_{|x|}^{|y|} |\frac{\partial}{\d r} p_1(r)|\, dr\le |x| p_1(x)|y-x| + |y-x|\I(|x|\le 2).
\end{equation*}
Therefore, for any $x,y\in\R$, $t>0$ we get
\begin{equation*}
|p_t(x)-p_t(y)|\le t^{-1}|y-x| (|x| p_t(x)+|y| p_t(y)+ \I(|x|\le 2\sqrt t)+\I(|y|\le 2\sqrt t)).
\end{equation*}
We derive using this bound
\begin{align*}
\int_\R &|p_{t}(z_1-z')-p_{t}(z_2-z')| \Lambda_\delta(|z'|\vee 1)dz'\\
\le& t^{-1}|z_1-z_2|\int_\R (p_{t}(z_1-z')|z_1-z'|+p_{t}(z_2-z')|z_2-z'|) \Lambda_\delta(|z'|\vee 1)dz'\\
&+t^{-1}|z_1-z_2|\int_\R\I(|z_1-z'|\le 2\sqrt t)+\I(|z_2-z'|\le 2\sqrt t)) \Lambda_\delta(|z'|\vee 1)dz'\\
\le& Ct^{-1}|z_1-z_2|( \Lambda_\delta(|z_1|\vee1)+\Lambda_\delta(|z_2|\vee1))\int_\R p_{t}(z')|z'| e^{|z'|}(|z'|^\delta\vee 1)dz'\\
&+C t^{-1}|z_1-z_2|( \Lambda_\delta(|z_1|\vee1)+\Lambda_\delta(|z_2|\vee1))\int_\R \I(|z'|\le 2\sqrt t) e^{|z'|}(|z'|^\delta\vee 1)dz'\\
\le& C t^{-1/2}|z_1-z_2| \Lambda_\delta(|z_1|\vee|z_2|\vee1).
\end{align*}
This proves \eqref{thirdstapp}.

The same trick is used to obtain \eqref{fourthstapp}. The function $x\to|\d^3 p_1(x)/\d x^3|$ is decreasing for $x>4$ and is bounded by $2$ for $x\in[0,4]$. Thus, for any $x,y\in\R$ with $|x|\le |y|$ we have
\begin{equation*}
\bigl|\frac{\d^2 p_1(x)}{\d x^2}-\frac{\d^2 p_1(y)}{\d y^2}\bigr|\le\int_{|x|}^{|y|} |\frac{\partial^3}{\d r^3} p_1(r)|\, dr\le |x|^3 p_1(x)|y-x| + 2|y-x|\I(|x|\le 4).
\end{equation*}
Since $\d p_t(x)/\d t=1/2 \d^2 p_t(x)/\d x^2$,  we obtain for any $x,y\in\R$, $t>0$
\begin{multline*}
\bigl|\frac{\d}{\d t}(p_{t}(x)-p_{t}(y))\bigr|\le C t^{-3}|y-x| (|x|^3 p_t(x)+|y|^3 p_t(y))\\
+ C t^{-2}|y-x|(\I(|x|\le 4\sqrt t)+\I(|y|\le 4\sqrt t)).
\end{multline*}
Thus,
\begin{multline}\label{thus}
\int_\R\int_{t_1}^{t_2}  \bigl| \frac{\d }{\d t'}(p_{t-t'}(z_1-z')-p_{t-t'}(z_2-z'))\bigr|(t_2-t')^{2/3-\delta}\Lambda_\delta(|z'|\vee 1)\, dt'dz'\\
\le C |z_1-z_2|\int_{t_1}^{t_2}(t_2-t')^{2/3-\delta} (t-t')^{-2} g(z_1,z_2,t-t')\,dt'
\end{multline}
where we denoted
\begin{align*}
g(z_1,z_2,s):=&s^{-1}\int_\R(p_{s}(z_1-z')|z_1-z'|^3+p_{s}(z_2-z')|z_2-z'|^3)\Lambda_\delta(|z'|\vee 1) dz'\\
&+\int_\R(\I(|z_1-z'|\le 4\sqrt{s})+\I(|z_2-z'|\le  4\sqrt{s}))\Lambda_\delta(|z'|\vee 1) dz'.
\end{align*}
By a direct calculation, we get
\begin{equation*}
g(z_1,z_2,s)\le C s^{1/2}\Lambda_\delta(|z_1|\vee|z_2|\vee 1).
\end{equation*}
Substituting this into \eqref{thus}, we  deduce
\begin{multline*}
\int_\R\int_{t_1}^{t_2}  \bigl| \frac{\d }{\d t'}(p_{t-t'}(z_1-z')-p_{t-t'}(z_2-z'))\bigr|(t_2-t')^{2/3-\delta}\Lambda_\delta(|z'|\vee 1)\, dt'dz'\\
\le C |z_1-z_2|\Lambda_\delta(|z_1|\vee|z_2|\vee 1)\int_{t_1}^{t_2}(t_2-t')^{2/3-\delta} (t-t')^{-3/2}\,dt'.
\end{multline*}
The integral in the right-hand side of the above equation is estimated as follows.
\begin{align*}
\int_{t_1}^{t_2}& (t_2-t')^{2/3-\delta} (t-t')^{-3/2} dt'=\int_0^{t_2-t_1}\frac{{t'}^{2/3-\delta} }{(t'+t-t_2)^{3/2} }\,dt'\\
\le&\I(t_2-t_1>t-t_2)\int_0^{t_2-t_1}{t'}^{-5/6-\delta} \,dt'+\I(t_2-t_1\le t-t_2)(t-t_2)^{-3/2} \int_0^{t_2-t_1} {t'}^{2/3-\delta} \,dt'\\
\le&2 \I(t_2-t_1>t-t_2)(t_2-t_1)^{2/3-\delta}(t-t_1)^{-1/2}\\
&+2 \I(t_2-t_1\le t-t_2)(t-t_1)^{-1/2} (t_2-t_1)^{2/3-\delta}\\
\le& 2 (t_2-t_1)^{2/3-\delta}(t-t_1)^{-1/2}.
\end{align*}
Combining this with \eqref{thus}, we finally get \eqref{fourthstapp}.
\end{proof}

\begin{proof}[Proof of Lemma~\ref{L:contextraterm}]
Without loss of generality we suppose $t_1\le t_2$. We derive
\begin{align}\label{I1I2}
|h(t_2,z_2)-h(t_1,z_1)|\le& \int_{t_1}^{t_2}\int_{\R} p_{t_2-t'}(z_2-z')|f(t',z')|\,dz'\,dt'\nnn\\
&+\int_{0}^{t_1}\int_{\R} |p_{t_2-t'}(z_2-z')-p_{t_1-t'}(z_1-z')|\,|f(t',z')|\,dz'\,dt'\nnn\\
\le& I_1+I_2.
\end{align}
Since the function $f$ is bounded, we immediately get
\begin{equation*}
I_1\le \|f\|_{\infty}(t_2-t_1).
\end{equation*}
To estimate integral $I_2$ we employ Lemma~\ref{L:densest} and Lemma~\ref{L:GE:main}. For any $\delta>0$ we have
\begin{align*}
I_2\le& \|f\|_{\infty}\int_{0}^{t_1}\int_{\R} \bigl(|p_{t_2-t'}(z_2\!-z')-p_{t_2-t'}(z_1\!-z')|+ |p_{t_2-t'}(z_1\!-z')-p_{t_1-t'}(z_1\!-z')|\bigr)\,dz'\,dt'\\
\le& C\|f\|_{\infty}\int_{0}^{t_1} |z_2-z_1|(t_2-t')^{-1/2}\,dt'+C\|f\|_{\infty}\int_{0}^{t_1} \log (t_2-t')-\log (t_1-t')\,dt'\\
\le& C \|f\|_{\infty}  |z_2-z_1|+C \|f\|_{\infty}\bigl( t_2\log t_2-(t_2-t_1)\log(t_2-t_1)-t_1\log t_1\bigr)\\
\le& C \|f\|_{\infty}  |z_2-z_1|+C\|f\|_{\infty} t_1\log(t_2/t_1)+C\|f\|_{\infty}(t_2-t_1)\log(t_2/(t_2-t_1))\\
\le& C \|f\|_{\infty}  |z_2-z_1|+C\|f\|_{\infty} (t_2-t_1)+ C(t_2-t_1)\|f\|_{\infty}|\log(t_2-t_1)|\\
\le& C \|f\|_{\infty} (|z_2-z_1|+(t_2-t_1)^{1-\delta}).
\end{align*}
Combining this with \eqref{I1I2} and the bound on $I_1$ we come to \eqref{LipH}.
\end{proof}

\begin{proof}[Proof of Lemma~\ref{L:timeq}]
Fix $T>0$. We begin with the proof of the first statement of the lemma. Suppose that $|q(z)|\le M (|z|^\mu\vee1)$, $z\in\R$. Let us check that the function $h$ belongs to the class
$\mathbf H_T(1,1,\mu,CM)$ for some $C>0$. Let $0<t_1<t_2<T$. A direct application of Lemma~\ref{L:densest}
yields for any $z\in\R$
\begin{align*}
|h(t_2,z)-h(t_1,z)|\le&\int_{\R}|p_{t_2}(z')-p_{t_1}(z')|\,|q(z-z')|\,dz'\\
\le& M \int_{\R}|p_{t_2}(z')-p_{t_1}(z')|\,(|z|^\mu+1)(|z'|^\mu+1)\,dz'\\
\le& C M (|z|^\mu+1)(\log t_2 -\log t_1)\\
\le& C M(z^\mu\vee1)(t_2-t_1)t_1^{-1},
\end{align*}
where the constant $C$ depends only on $T$ and $\mu$. Thus, $h\in \mathbf H_T(1,1,\mu,CM)$.

Now, let us prove the second statement of the lemma. Let $q\in\mathbf{CL}$. Clearly for any $z_1,z_2\in\R$, $t\in[0,T]$ we have
\begin{equation*}
|h(t,z_1)-h(t,z_2)|\le  \int_{\R}p_{t}(z')|q(z_2-z')-q(z_1-z')|\,dz'\le C_1 |z_1-z_2|,
\end{equation*}
where $C_1=C_1(q)$. Using this inequality, we derive for  $0<t_1<t_2<T$, $z\in\R$
\begin{align*}
|h(t_1,z)-h(t_2,z)|&\le \int_\R p_{t_2-t_1}(z')|h(t_1,z-z')-h(t_1,z)|\,dz'\le
C_1 \int_\R p_{t_2-t_1}(z')|z'|\,dz'\\
\le C_1 |t_1-t_2|^{1/2}.
\end{align*}
Hence for any $0<t_1<t_2<T$, $z_1,z_2\in\R$ we get
\begin{align*}
|h(t_2,z_2)-h(t_1,z_1)|&\le |h(t_2,z_2)-h(t_2,z_1)|+|h(t_2,z_1)-h(t_1,z_1)|\\
&\le C_1 |z_1-z_2|+C_1 |t_1-t_2|^{1/2}.\qedhere
\end{align*}
\end{proof}

\end{appendices}


\begin{thebibliography}{10}

\bibitem{BGP94}
{\sc Bally, V., Gy{\"o}ngy, I., and Pardoux, {\'E}.} (1994).
\newblock White noise driven parabolic {SPDE}s with measurable drift.
\newblock {\em J. Funct. Anal.\/} {\bf 120}~(2), 484--510.


\bibitem{CG}
\textsc{Catellier, R.}, \textsc{Gubinelli, M}. (2012)
\newblock Averaging along irregular curves and regularisation of ODEs.
\newblock ArXiv preprint arXiv:1205.1735.



\bibitem{cer03}
{\sc Cerrai, S.} (2003).
\newblock Stochastic reaction-diffusion systems with multiplicative noise and
  non-{L}ipschitz reaction term.
\newblock {\em Probab. Theory Related Fields\/} {\bf 125}~(2), 271--304.

\bibitem{Dav}
{\sc Davie, A.~M.} (2007).
\newblock Uniqueness of solutions of stochastic differential equations.
\newblock {\em Int. Math. Res. Not. IMRN\/} ~(24), Art. ID rnm124, 26.

\bibitem{FlFe13}
{\sc Fedrizzi, E. and Flandoli, F.} (2013).
\newblock H\"older flow and differentiability for {SDE}s with nonregular drift.
\newblock {\em Stoch. Anal. Appl.\/} {\bf 31}~(4), 708--736.

\bibitem{Fl93}
{\sc Flandoli, F.} (1995).
\newblock {\em Regularity theory and stochastic flows for parabolic {SPDE}s\/},
  vol.~9 of {\em Stochastics Monographs\/}.
\newblock Gordon and Breach Science Publishers, Yverdon.



\bibitem{Fl11}
{\sc Flandoli, F.} (2011).
\newblock {\em Random Perturbation of PDEs and Fluid Dynamic Models},
  vol.~2015 of {\em \'{E}cole d'\'{e}t\'{e} de Probabilit\'{e}s de Saint-Flour\/}.
\newblock Springer, New York.



\bibitem{fgp10}
{\sc Flandoli, F., Gubinelli, M., and Priola, E.} (2010).
\newblock Well-posedness of the transport equation by stochastic perturbation.
\newblock {\em Invent. Math.\/} {\bf 180}~(1), 1--53.

\bibitem{Fol}
{\sc Folland, G.~B.} (1999).
\newblock {\em Real analysis\/}.
\newblock 2nd edn. Pure and Applied Mathematics (New York), John Wiley \& Sons,
  Inc., New York.
\newblock ISBN 0-471-31716-0.
\newblock Modern techniques and their applications, A Wiley-Interscience
  Publication.

\bibitem{GZ11}
{\sc Goldys, B. and Zhang, X.} (2011).
\newblock Stochastic flows for nonlinear {SPDE}s driven by linear
  multiplicative space-time white noises.
\newblock In: {\em Stochastic analysis with financial applications\/}, vol.~65
  of {\em Progr. Probab.\/}, pp. 83--97. Birkh\"auser/Springer Basel AG, Basel.

\bibitem{GyongyPardoux1993}
{\sc Gy{\"o}ngy, I. and Pardoux, {\'E}.} (1993).
\newblock On quasi-linear stochastic partial differential equations.
\newblock {\em Probab. Theory Related Fields\/} {\bf 94}~(4), 413--425.

\bibitem{GyongyPardoux1993b}
{\sc Gy{\"o}ngy, I. and Pardoux, {\'E}.} (1993).
\newblock On the regularization effect of space-time white noise on
  quasi-linear parabolic partial differential equations.
\newblock {\em Probab. Theory Related Fields\/} {\bf 97}~(1-2), 211--229.


\bibitem{HP15}
\textsc{Hairer, M and Pardoux, {\'E}.} (2015).
\newblock A Wong--Zakai theorem for stochastic PDEs.
\newblock \textit{J. Math. Soc. Japan} {\bf 67}, 1551--1604.






\bibitem{Hule}
{\sc Hu, Y. and Le, K.} (2013).
\newblock A multiparameter {G}arsia-{R}odemich-{R}umsey inequality and some
  applications.
\newblock {\em Stochastic Process. Appl.\/} {\bf 123}~(9), 3359--3377.

\bibitem{Khosh}
{\sc Khoshnevisan, D.} (2009).
\newblock A primer on stochastic partial differential equations.
\newblock In: {\em A minicourse on stochastic partial differential
  equations\/}, vol. 1962 of {\em Lecture Notes in Math.\/}, pp. 1--38.
  Springer, Berlin.

\bibitem{kr_rock05}
{\sc Krylov, N.~V. and R{\"o}ckner, M.} (2005).
\newblock Strong solutions of stochastic equations with singular time dependent
  drift.
\newblock {\em Probab. Theory Related Fields\/} {\bf 131}~(2), 154--196.

\bibitem{Kuni}
{\sc Kunita, H.} (1997).
\newblock {\em Stochastic flows and stochastic differential equations\/},
  vol.~24 of {\em Cambridge Studies in Advanced Mathematics\/}.
\newblock Cambridge University Press, Cambridge.
\newblock ISBN 0-521-35050-6; 0-521-59925-3.
\newblock Reprint of the 1990 original.

\bibitem{mnp15}
{\sc Mohammed, S.-E.~A., Nilssen, T.~K., and Proske, F.~N.} (2015).
\newblock Sobolev differentiable stochastic flows for {SDE}s with singular
  coefficients: applications to the transport equation.
\newblock {\em Ann. Probab.\/} {\bf 43}~(3), 1535--1576.

\bibitem{Priola15}
{\sc Priola, E.} (2015).
Davie's type uniqueness for a class of sdes with jumps. ArXiv preprint
arXiv:1509.07448.

\bibitem{rez14}
{\sc Rezakhanlou, F.} (2014).
Regular flows for diffusions with rough drifts. ArXiv preprint
arXiv:1405.5856.

\bibitem{Shap}
{\sc Shaposhnikov, A.} (2016).
\newblock Some remarks on Davie's uniqueness theorem.
\newblock To appear in {\it Proceedings of the Edinburgh Mathematical Society},
  arXiv preprint arXiv:1401.5455.

\bibitem{ver80}
{\sc Veretennikov, A.~J.} (1980).
\newblock Strong solutions and explicit formulas for solutions of stochastic
  integral equations.
\newblock {\em Mat. Sb. (N.S.)\/} {\bf 111(153)}~(3), 434--452, 480.

\bibitem{Walsh}
{\sc Walsh, J.~B.} (1986).
\newblock An introduction to stochastic partial differential equations.
\newblock In: {\em \'{E}cole d'\'et\'e de probabilit\'es de {S}aint-{F}lour,
  {XIV}---1984\/}, vol. 1180 of {\em Lecture Notes in Math.\/}, pp. 265--439.
  Springer, Berlin.

\bibitem{zhang_x11}
{\sc Zhang, X.} (2011).
\newblock Stochastic homeomorphism flows of {SDE}s with singular drifts and
  {S}obolev diffusion coefficients.
\newblock {\em Electron. J. Probab.\/} {\bf 16}, no. 38, 1096--1116.

\bibitem{zvonkin74}
{\sc Zvonkin, A.~K.} (1974).
\newblock A transformation of the phase space of a diffusion process that will
  remove the drift.
\newblock {\em Mat. Sb. (N.S.)\/} {\bf 93(135)}, 129--149, 152.

\end{thebibliography}
\end{document}